\def\CC {{\mathbb C}}     %% complex numbers
\def\HH {{\mathbb H}}     %% quaternions
\def\NN {{\mathbb N}}     %% natural numbers
\def\PP {{\mathbb P}}     %% projective
\def\QQ {{\mathbb Q}}     %% rationals
\def\RR {{\mathbb R}}     %% real numbers
\def\ZZ {{\mathbb Z}}     %% integers
\def\ring#1{\ifmmode \mathaccent'027 #1\else \rm\accent'027 #1\fi}
\newcommand{\re}{{\mathrm e}}
\newcommand{\ri}{{\mathrm i}}
\newcommand{\lift}[2]{%
\setlength{\unitlength}{1pt}
\begin{picture}(0,0)(0,0)
\put(0,{#1}){\makebox(0,0)[b]{${#2}$}}
\end{picture}
}
\newcommand{\lowerarrow}[1]{%
\setlength{\unitlength}{0.03\DiagramCellWidth}
\begin{picture}(0,0)(0,0)
\qbezier(-28,-4)(0,-18)(28,-4)
\put(0,-14){\makebox(0,0)[t]{$\scriptstyle {#1}$}}
\put(28.6,-3.7){\vector(2,1){0}}
\end{picture}
}
\newcommand{\upperarrow}[1]{%
\setlength{\unitlength}{0.03\DiagramCellWidth}
\begin{picture}(0,0)(0,0)
\qbezier(-28,11)(0,25)(28,11)
\put(0,21){\makebox(0,0)[b]{$\scriptstyle {#1}$}}
\put(28.6,10.7){\vector(2,-1){0}}
\end{picture}
}
\def\ul  {\underline}
\def\wt  {\widetilde}
\def\mc {\mathcal}
\def\Hom {\mathrm{Hom}}
\def\st {\mathrm{Stab}}
\def \bd {\begin{diagram}}
\def \ed {\end{diagram}}
\def\be  {\begin{eqnarray}}
\def\ee  {\end{eqnarray}}
\def\ben {\begin{eqnarray*}}
\def\een {\end{eqnarray*}}
\def\bpr {\begin{proof}[Proof]}
\def\epr {\end{proof}}
\def\bsp {\begin{split}}
\def\esp {\end{split}}
\def\bcd {\begin{CD}}
\def\ecd {\end{CD}}
\mathchardef\mhyphen="2D
\newcommand{\abs}[1]{\left\vert#1\right\vert}
\newcommand{\scal}[1]{\left\langle#1\right\rangle}
\newtheorem{theorem}{Theorem}[section]
\newtheorem{lemma}[theorem]{Lemma}
\newtheorem{prop}[theorem]{Proposition}
\newtheorem{coro}[theorem]{Corollary}
\newtheorem{remark}[theorem]{Remark}
\newtheorem{df}[theorem]{Definition}
\newtheorem{q}{Question}[section]
\newtheorem*{theorem2}{Theorem}
\theoremstyle{plain} 
\theoremstyle{plain}
\begin{document}

\title[Dynamical systems and categories]%
{Dynamical systems and categories}

\author{ G. Dimitrov, F. Haiden,  L. Katzarkov, M. Kontsevich }

\address[George Dimitrov]{Universit$\rm \ddot{a}$t Wien\\
Garnisongasse 3, 1090, Wien\\
\ $\rm \ddot{O}$sterreich}
\email{gkid@abv.bg}

\address[Fabian Haiden]{Universit$\rm \ddot{a}$t Wien\\
Garnisongasse 3, 1090, Wien\\
\ $\rm \ddot{O}$sterreich}
\email{fabian.haiden@univie.ac.at}

\address[Ludmil Katzarkov]
{Universit$\rm \ddot{a}$t Wien\\
Garnisongasse 3, 1090, Wien\\
\ $\rm \ddot{O}$sterreich}
\email{lkatzark@math.uci.edu}

\address[Maxim Kontsevich]
{IHES\\
G35 route de Chartres, F-91440\\
France}
\email{maxim@ihes.fr}

\begin{abstract}
We study questions motivated by results in the classical theory of dynamical
systems in the context of triangulated and $A_\infty$-categories.
First, entropy is defined for exact endofunctors and computed in a variety of
examples. In particular, the classical entropy of a pseudo-Anosov map is
recovered from the induced functor on the Fukaya category.
Second, the density of the set of phases of a Bridgeland stability condition is
studied and  a complete answer is given  in the case of bounded derived categories
of quivers. Certain  exceptional pairs in triangulated categories, which we call Kronecker pairs, are used to construct stability conditions with density of phases.
Some open questions and further directions are outlined as well.
\end{abstract}

\maketitle

\setcounter{tocdepth}{2}
\tableofcontents

\section{Introduction}

Recent work of Cantat-Lamy \cite{CL} on the Cremona group and Blanc-Cantat
\cite{BlancCantat} on dynamical spectra suggests that there exists a deep
parallel between the study of groups of birational automorphisms on one hand,
and mapping class groups on the other.
Under this parallel, the dynamical degree of a birational map plays the role of
the entropy of pseudo-Anosov maps.
In the present paper we consider these developments from the perspective of
derived categories and their groups of autoequivalences, making direct
connections with the classical theory.

In another striking series of papers
Gaiotto-Moore-Neitzke \cite{GNM} and Bridgeland-Smith \cite{BS}
have established a connection between Teichm\"uller theory and theory of
stability conditions on triangulated categories.
An analogy between Teichm\"uller geodesic flow and the space of stability
conditions had been noticed previously in the paper 
\cite{KS} by Soibelman and the fourth author. 
One of the results is a correspondence between geodesics and stable
objects, with slopes of the former giving the phases of the latter.
Motivated by this, we study the set of phases of stable objects for
general stability conditions, leading to some surprising results.

Let us describe the contents of the paper in more detail.
First, we define and study entropy in the context of triangulated
and $A_\infty$-categories, more specifically \emph{dynamical entropy}, as a measure
of complexity of a dynamical system.
This notion comes in a variety of flavors: Let us mention Kolmogorov-Sinai
(measure-theoretic) entropy \cite{sinai1959}, topological entropy
\cite{adler1965}, and algebraic entropy \cite{bellon1999}.
In analogy with these notions, we define in Section 2 the entropy of an
exact endofunctor of a triangulated category with generator.
By taking into account the $\mathbb Z$-graded nature of the category one
gets not just a single real number, but a function on the real line.
In the case of saturated (smooth and proper) $A_{\infty}$-categories, the
following foundational results are proven (see
Theorems~\ref{entropyPoincareSeries} and \ref{hom_entropy_bound} for the
precise statements)
\begin{theorem2} In the saturated case, the entropy of an
endofunctor may be computed as a limit of Poincar\'e polynomials of Ext-groups.
\end{theorem2}
\begin{theorem2}
In the saturated case (under a certain generic technical condition), there is a
lower bound on the entropy given by the logarithm of the spectral radius of the
induced action on Hochschild homology.
\end{theorem2}

We compute the entropy of endofunctors in various examples.
In the most basic case of semisimple categories the entropy of any endofunctor
is described, in log-coordinates, as a branch of a real algebraic curve defined
over $\mathbb Z$.
Next, we show --- under a certain generic technical condition --- that the
entropy of a regular endomorphism of a smooth projective variety is given by the
logarithm of the spectral radius of the induced map on cohomology.
As another direction, we determine the entropy of the Serre functor in a number
of cases.
The results suggest some relation to the ``dimension'' of the category.
Finally, we show that the entropy of an autoequivalence induced by a
pseudo-Anosov map on the Fukaya category of a surface coincides with its
topological entropy, the logarithm of the stretch factor.
This concludes Section 2 of the paper.

Next, in Section 3,   the topological properties of the set of
phases of a stability condition on a triangulated category are
investigated. This notion of stability was introduced by T.
Bridgeland \cite{Bridg1} based on a proposal by M. Douglas
\cite{Douglas1, Douglas2}. It is an essential ingredient in the
theory of motivic Donaldson-Thomas invariants developed by Y.
Soibelman and the fourth author \cite{KS}. As we have mentioned,
guided by work of Gaiotto, Moore, and Neitzke \cite{GNM},
Bridgeland and Smith \cite{BS} identify spaces of meromorphic
quadratic differentials with spaces of stability conditions on
categories associated with quivers with potential. Moreover, under
this correspondence, stable objects are identified with geodesics
of finite length. Motivated by the density of the set of slopes of
closed geodesics on a Riemann surface    we investigate in Section
3 the question whether a given triangulated category admits a
stability condition such that the set of phases of stable objects
is dense somewhere in the circle.

In case of stability conditions non-dense behavior is possible
(Lemma \ref{Dynkin} and Corollary \ref{coro for density1}):
 \begin{theorem2}The
phases are never dense in an arc for  Dynkin and Euclidean quivers.\footnote{i.
e. acyclic quivers with underlying graph  a Dynkin or an extended Dynkin diagram} \end{theorem2}

Similarly as in the  case of geodesics density  property  is expected to hold in general.

 We introduce the notion of Kronecker pair (Definition \ref{defKroneckeppair})
 and show that (Theorem \ref{coro for kronecker pairs 0}) \begin{theorem2}If a  $\CC$-linear
 triangulated category  $\mc T$ contains a Kronecker pair, s. t. a certain family of stability conditions on
 it is extendable to the whole category, then the extended stability conditions
have phases dense in some arc. \end{theorem2}  Using this theorem  we obtain
(Proposition \ref{non euc and non dyn}):
\begin{theorem2}Any connected  quiver $Q$, which is neither Euclidean nor Dynkin has a
 family of stability conditions with phases which are dense in an arc. \end{theorem2}
We record these findings on density property in the following table:
\begin{gather}  \label{table} \begin{array}{| c | c | c | c | c | c | c |}
  \hline
         \mbox{Dynkin quivers}            &   P_\sigma  \  \mbox{is always finite}                 \\ \hline
   \mbox{  Euclidean quivers}  &  P_\sigma  \  \mbox{is either finite  or} \ \mbox{has exactly two limit points}         \\ \hline
    \mbox{All other quivers} &   \ P_\sigma \ \mbox{is dense in an arc }   \mbox{for  a family of stability conditions}\\ \hline
     \end{array}
  \end{gather}
where $P_\sigma $ denotes the set of stable phases (see Definition
\ref{P_sigma}).

By the  non-dense behavior of  stability conditions  we find that
\emph{on Dynkin and Euclidean quivers the dimensions of  Hom
spaces of exceptional pairs
 are strictly smaller than $3$} (Corollary \ref{upper bound for euc}).

  Further  examples of density of phases (blow ups of projective spaces) are given in subsection \ref{further examples}.

We summarize the guiding analogies between the classical theory of
dynamical systems and the theory of triangulated and $A_\infty$-categories in
the following table.

\vskip 1\baselineskip

\begin{tabular}{| c | c | c | c | c |}
  \hline
%                 &                      &                     &                                         &                   \\ \hline
  Classical       & \begin{tabular}{l} Geodesics dense set \\  \includegraphics[bb=0 0 50 30, width=1.9cm, height=2cm, scale=0.5]{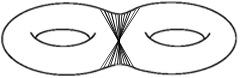} \end{tabular}&   Geodesics         &  \begin{tabular}{l} Thurston \\ compactification \\ \includegraphics[ width=3cm, height=2cm, scale=0.5]{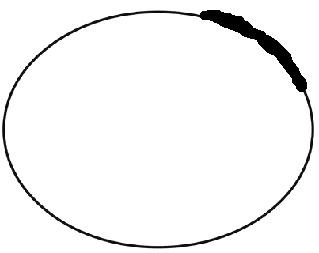}  \end{tabular}   & \begin{tabular}{l}  Entropy \\  \includegraphics[bb=0 0 80 80, width=3cm, height=2cm, scale=0.5]{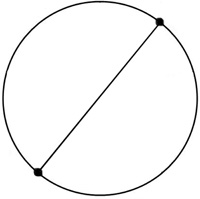} \end{tabular}            \\ \hline
  Categorical     &    \begin{tabular}{l} Density of phases \\
                      \includegraphics[bb=0 0 80 80, width=2cm, height=2cm, scale=0.5]{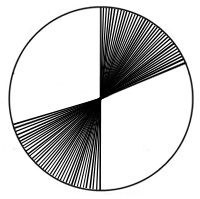} \end{tabular}         &    Stable objects   &  Stability conditions                  &  \begin{tabular}{c} Behaviour of\\ the mass of a generator
                                                                                                            \end{tabular} \\ \hline
\end{tabular}

\vskip 1\baselineskip

In this paper we only scratch the surface  of a rather promising,
in our opinion, area of future research. In the final section we
suggest some open problems and possible new directions related to
stability conditions, birational geometry, and Fukaya categories.
First, inspired by the classical theory of pseudo-Anosov maps, we
suggest a definition of a pseudo-Anosov functor in the context of
triangulated categories with stability conditions. This is
conjecturally a direct generalization of the usual notion which
has a geometric interpretation in higher dimensions. Furthermore,
we propose a connection with the works of Cantat-Lamy~\cite{CL}
and Blanc-Cantat~\cite{BlancCantat}. In \cite{CL}, based on
Gromov's ideas in geometric group theory, the authors prove the
nonsimplicity of Cremona group in dimension two. Later, in the
paper \cite{BlancCantat}, Blanc and Cantat consider the dynamical
spectra of the group of birational transformations as a tool for
studying groups of birational automorphisms of surfaces. We
propose that this dynamical spectrum is part of a categorical
dynamical spectrum. We also briefly explore Gromov's idea that
entropy measures growth of volume of submanifolds under the action
of a smooth map. From this perspective we conjecture that
categorical complexity is related to the mass of objects with
respect to a stability condition. Finally, we pose several
questions pertaining to the behavior of phases of stable objects.

{\bf Acknowledgements:} We are grateful to Denis Auroux for many useful discussions.
The authors were funded by NSF DMS 0854977 FRG, NSF DMS 0600800, NSF DMS 0652633
FRG, NSF DMS 0854977, NSF DMS 0901330, FWF P 24572 N25, by FWF P20778 and by an
ERC Grant.

\section{Entropy of endofunctors}

In this section we define and study the entropy of an exact endofunctor of a
triangulated category with generator.
Our definition, see Subsection \ref{subsecEntropy}, is based on a notion
of complexity of one object relative to another in such a category discussed in
Subsection \ref{subsecComplexity}.
In Subsection \ref{subsecSaturated} the case of saturated
$A_\infty$-categories is treated, for which entropy is more easily computed
from dimensions of $\mathrm{Ext}$-groups.
The rest of the section is devoted to various examples.

\subsection{Complexity in triangulated categories} \label{subsecComplexity}

Suppose $\mc T$ is a triangulated category with (split-) generator $G$.
By definition, this means that for every $E\in\mc T$ there is an $E'$ and a
tower of triangles
\begin{equation}
\begin{diagram}[size=1em]
0=F_0 & \rTo      &        &       &   F_1 & \rTo      &         &        & F_2
 &                  &F_{k-1} & \rTo      &        &       & F_k\cong E \oplus E'
 \\
      & \luDashto &        & \ldTo &       &\luDashto  &         &  \ldTo &
 & \quad\cdots\quad &        & \luDashto &        & \ldTo &       \\
      &           & G[n_1] &       &       &           &  G[n_2] &        &
 &                  &        &           & G[n_k] &
\end{diagram}
\end{equation}
with $k\ge 0$, $n_i\in\mathbb Z$.
One can ask, for each $E\in\mc T$, what the least number of shifted copies of
$G$ needed is, in order for such a relation to hold.
Taking into account the $n_i$'s as well, one arrives at the following
definition.
\begin{df} \label{complexity}
Let $E_1, E_2$ be objects in a triangulated category $\mc T$.
The \emph{complexity of $E_2$ relative to $E_1$} is the function
$\delta_t(E_1,E_2):\RR\to[0,\infty]$ given by
\begin{equation*}
\delta_t(E_1,E_2) = \inf \left \{ \sum_{i=1}^k \re^{n_i t} \left \vert  \begin{diagram}[size=0.8em] \label{HN filtration} 0 & \rTo      &     &       &   A_1 & \rTo      &     &        & A_2 &        &  A_{k-1} & \rTo      &     &       & E_2 \oplus E_2' \\
  & \luDashto &     & \ldTo &       &\luDashto  &     &  \ldTo &     &         \cdots    &              & \luDashto &     & \ldTo &       \\
  &           & E_1[n_1] &       &       &           &  E_1[n_2] &            &      &             &         &           & E_1[n_k] &
\end{diagram}  \right. \right\}.
\end{equation*}
\end{df}
Note that $\delta_t(E_1,E_2)=+\infty$ iff $E_2$ does not lie in the thick
triangulated subcategory generated by $E_1$.
Also, when $\mc T$ is $\mathbb Z/2$-graded in the sense that
$[2]\cong\mathrm{id}_{\mc T}$, only the value at zero, $\delta_0(E_1,E_2)$, will
be of any use.
We collect some basic inequalities in the following proposition.

\begin{prop} \label{complexityInequalities}
Let $\mc T$ be a triangulated category, $E_1,E_2,E_3\in\mc T$. Then

\emph{(a)} (Triangle inequality) $\delta_{t}(E_1, E_3)\leq \delta_{t}(E_1,
E_2)\delta_{t}(E_2, E_3)$,

\emph{(b)} (Subadditivity) $\delta_t(E_1,E_2\oplus E_3)\leq
\delta_t(E_1,E_2)+\delta_t(E_1,E_3)$,

\emph{(c)} (Retraction) $\delta_{t}(F(E_1), F(E_2)) \leq  \delta_{t}(E_1,
E_2)$ for any exact functor $F: \mc T \rightarrow \mc T'$.
\end{prop}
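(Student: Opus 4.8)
The plan is to reduce all three inequalities to a single ``splicing'' lemma for distinguished triangles and then do some elementary bookkeeping with the infimum. I will use freely two observations that are immediate from Definition \ref{complexity}: shifting a tower scales every weight by the same factor, so $\delta_t(E_1,E_2[m]) = \re^{mt}\,\delta_t(E_1,E_2)$; and since any tower ending in $(E_2\oplus E_2')\oplus W$ also exhibits $E_2$ as a retract of an iterated cone on shifts of $E_1$, the quantity $\delta_t(E_1,\,\cdot\,)$ is unchanged by adjoining direct summands, in particular $\delta_t(E_1,E_2)\le\delta_t(E_1,E_2\oplus E_2')$. Also $\delta_t(E_1,0)=0$ via the empty tower.

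\textbf{Step 1 (the key lemma).} I claim that for any distinguished triangle $X\to Y\to Z\to X[1]$ and any tower $0=Z_0\to Z_1\to\cdots\to Z_n=Z$ with cones $W_i$, there is a tower $X=Y_0\to Y_1\to\cdots\to Y_n=Y$ with the same cones $W_i$. This is proved by induction on $n$: peeling off the top step of the tower for $Z$ gives a triangle $Z_{n-1}\to Z\to W_n\to Z_{n-1}[1]$, and applying the octahedral axiom to the composition $Y\to Z\to W_n$ (whose first two cones are $X[1]$ and $Z_{n-1}[1]$) produces an object $Y_{n-1}$ fitting into a triangle $X\to Y_{n-1}\to Z_{n-1}\to X[1]$ together with a triangle $Y_{n-1}\to Y\to W_n\to Y_{n-1}[1]$; the inductive hypothesis applied to the former finishes the construction. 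Combined with the shift-scaling above, this gives the subadditive bound $\delta_t(E_1,Y)\le\delta_t(E_1,X)+\delta_t(E_1,Z)$ for every distinguished triangle $X\to Y\to Z\to X[1]$: splice a near-optimal tower for $Z\oplus Z'$ on top of a near-optimal tower for $X\oplus X'$, after harmlessly enlarging $Y$ by the auxiliary summands (which does not change $\delta_t(E_1,Y)$).

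\textbf{Step 2 (parts (b) and (c)).} Part (b) is the special case of the subadditive bound applied to the split triangle $E_2\to E_2\oplus E_3\to E_3\to E_2[1]$; alternatively one just takes the direct sum of near-optimal towers for $E_2\oplus E_2'$ and $E_3\oplus E_3'$. Part (c) is immediate: an exact functor $F$ is additive, commutes with the shift, and carries distinguished triangles to distinguished triangles, so it sends any tower realizing a value close to $\delta_t(E_1,E_2)$ to a tower of the same total weight with pieces $F(E_1)[n_i]$ and endpoint $F(E_2)\oplus F(E_2')$; taking the infimum gives $\delta_t(F(E_1),F(E_2))\le\delta_t(E_1,E_2)$.

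\textbf{Step 3 (part (a)).} Given a tower $0=F_0\to\cdots\to F_\ell=E_3\oplus E_3'$ with cones $E_2[m_j]$ realizing a value close to $\delta_t(E_2,E_3)$, I induct on $\ell$ using the subadditive bound: from $F_{j-1}\to F_j\to E_2[m_j]\to F_{j-1}[1]$ one gets $\delta_t(E_1,F_j)\le\delta_t(E_1,F_{j-1})+\re^{m_j t}\delta_t(E_1,E_2)$, hence $\delta_t(E_1,E_3)\le\delta_t(E_1,E_3\oplus E_3')\le\big(\sum_j \re^{m_j t}\big)\delta_t(E_1,E_2)$; letting the chosen tower approach the infimum defining $\delta_t(E_2,E_3)$ yields (a). Degenerate cases (some $\delta_t=+\infty$, or $E_i=0$) are checked directly with the convention $0\cdot\infty=0$ and $\delta_t(E_1,0)=0$. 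The main obstacle is Step 1: the octahedral bookkeeping needed to pull a tower back along a distinguished triangle, together with the minor nuisance of carrying the split-generator auxiliary objects so that every tower produced is literally of the type appearing in Definition \ref{complexity}. Everything after that is formal manipulation of finite sums and infima.
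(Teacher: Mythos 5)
Your argument is correct and is essentially the paper's: the paper proves only (a), by splicing a near-optimal tower for $E_2$ (suitably shifted) into each step of a near-optimal tower for $E_3$ to get a $kp$-step tower with pieces $E_1[n_i+m_j]$ and enlarged auxiliary summand $E_3'\oplus\bigoplus_j E_2'[m_j]$ — exactly what your octahedral Step 1 together with the induction of Step 3 makes explicit — while (b) and (c) are left to the reader there and your treatment of them is fine. One small correction to your degenerate-case remark: the convention $0\cdot\infty=0$ is the wrong one, since e.g.\ in a $2$-periodic category one can have $\delta_t(E_2,E_3)=0$ as an unattained infimum while $\delta_t(E_1,E_2)=\delta_t(E_1,E_3)=+\infty$, so (a) must be read with $+\infty\cdot 0=+\infty$, i.e.\ the inequality is simply vacuous whenever a right-hand factor is infinite (which is also what the paper's choice of $\epsilon$-optimal towers tacitly assumes).
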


\bpr We will prove only (a).
Take $\epsilon>0$ and let
\ben
\begin{diagram}[size=1em] 0 & \rTo      &     &       &   A_1 & \rTo      &     &        & A_2 &        &  A_{k-1} & \rTo      &     &       & E_2 \oplus E_2' \\
  & \luDashto &     & \ldTo &       &\luDashto  &     &  \ldTo &     &         \cdots    &              & \luDashto &     & \ldTo &       \\
  &           & E_1[n_1] &       &       &           &  E_1[n_2] &            &      &             &         &           & E_1[n_k] &
\end{diagram}
\een
\ben
 \begin{diagram}[size=1em]
0 & \rTo      &     &       &   B_1 & \rTo      &     &        & B_2 &        &  B_{p-1} & \rTo      &     &       & E_3 \oplus E_3' \\
  & \luDashto &     & \ldTo &       &\luDashto  &     &  \ldTo &     &         \cdots    &              & \luDashto &     & \ldTo &       \\
  &           & E_2[m_1] &       &       &           &  E_2[m_2] &            &
  &             &         &           & E_2[m_p] &
\end{diagram}
\een
be such that
\be \sum_{i=1}^k \re^{n_i t} < \delta_t(E_1,E_2)+\epsilon,\qquad \ \sum_{j=1}^p
\re^{m_j t} < \delta_t(E_2,E_3)+\epsilon.\ \ee
Using the given sequences of triangles one can construct a sequence of the type
\ben \begin{diagram}[size=1em]
0 & \rTo      &     &       &   C_1 & \rTo      &     &        & C_2 &        &  C_{k p-1} & \rTo      &     &       & E_3 \oplus E_3'' \\
  & \luDashto &     & \ldTo &       &\luDashto  &     &  \ldTo &     &         \cdots    &              & \luDashto &     & \ldTo &       \\
  &           & E_1[n_1+m_1] &       &       &           &  E_1[n_2+m_1] &            &      &             &         &           & E_1[n_k+m_p] &
\end{diagram}, \een
where $E_3''= E_3'\oplus E_2'[m_1]\oplus E_2'[m_2] \dots \oplus   E_2'[m_p]$. Hence
\be \delta_t(E_1,E_3)\leq \sum_{i,j}\re^{(n_i+m_j) t} \leq
(\delta_t(E_1,E_2)+\epsilon )(\delta_t(E_2,E_3)+\epsilon ).\ee
Since this holds for any $\epsilon>0$, the inequality follows.
\epr

For complexes of vector spaces the complexity with respect to the ground field
is just the Poincar\'e (Laurent-) polynomial in $\re^{-t}$.

\begin{lemma} \label{complexity in Dbvec} Let $\mc T = D^b(k)$ be the bounded
derived category of finite-dimensional vector spaces over a field $k$.
Then
\ben \delta_t(k,E) = \sum_{n} \dim (H^n E)\re^{-nt} \een
for any $E\in\mc T$.
\end{lemma}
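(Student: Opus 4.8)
The claim is that in $\mc T = D^b(k)$, we have $\delta_t(k,E) = \sum_n \dim(H^n E)\re^{-nt}$. The plan is to prove this by establishing two inequalities, using the fact that every object of $D^b(k)$ is (non-canonically) isomorphic to its cohomology, i.e.\ $E \cong \bigoplus_n H^n(E)[-n]$, since $k$ is a field and there are no nonzero Ext's between shifts of $k$ except in degree zero. So write $d_n = \dim H^n(E)$.

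\textbf{Upper bound.} First I would exhibit an explicit filtration of $E$ (or of $E \oplus E'$ with $E'=0$) by shifted copies of $k$ realizing the value $\sum_n d_n \re^{-nt}$. Since $E \cong \bigoplus_n k^{d_n}[-n]$, one can build a tower of triangles whose cones are precisely the $\sum_n d_n$ objects $k[-n]$ (each appearing $d_n$ times), simply by adding one summand at a time; at each stage the triangle $F_{j-1} \to F_j \to k[-n_j]$ is the obvious split one. This gives $\delta_t(k,E) \le \sum_j \re^{n_j t} = \sum_n d_n \re^{-n t}$, where the exponent is $-n$ because the cone is $k[-n]$, i.e.\ in the notation of Definition \ref{complexity} the shift is $n_j=-n$.

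\textbf{Lower bound.} This is the step I expect to be the main obstacle, since a priori one must control \emph{all} towers of triangles with cones shifted copies of $k$, including non-split and highly inefficient ones, and also the auxiliary summand $E'$. The natural tool is an additive/subadditive invariant. I would use the assignment $E \mapsto \sum_n \dim(H^n E)\re^{-nt} \in \RR[\re^t, \re^{-t}]$ and observe that for any triangle $A \to B \to C \to A[1]$ the long exact cohomology sequence gives $\dim H^n B \le \dim H^n A + \dim H^n C$ for every $n$, hence $P_t(B) \le P_t(A) + P_t(C)$ coefficientwise, where $P_t$ denotes this Poincaré Laurent polynomial and $\le$ is the coefficientwise order. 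Also $P_t(k[m]) = \re^{-mt}$. Running this subadditivity up any tower of triangles with cones $k[n_1],\dots,k[n_k]$ yields $P_t(E \oplus E') \le \sum_i \re^{-n_i t}\cdot(\text{wait})$ — more carefully, $P_t(F_j) \le P_t(F_{j-1}) + P_t(k[n_j])$, so $P_t(E\oplus E') = P_t(F_k) \le \sum_{i=1}^k \re^{-n_i t}$; but the exponents in Definition \ref{complexity}'s sum are $e^{n_i t}$, so I must be careful with the sign convention: with cones $E_1[n_i]=k[n_i]$ we get $P_t(F_k)\le \sum_i \re^{-n_i t}$, while the complexity sum is $\sum_i \re^{n_i t}$, i.e.\ replacing $t\mapsto -t$; so in fact what is bounded is $P_{-t}(E\oplus E')$, and since $P_t(E)\preceq P_t(E\oplus E')$ coefficientwise (as $d_n(E)\le d_n(E\oplus E')$) and all coefficients are nonnegative, evaluating at any real $t$ gives $\sum_n d_n(E)\re^{-nt} \le \sum_i \re^{n_i t}$ after the substitution is untangled — concretely, the cleanest route is: the tower has cones $k[n_i]$, apply subadditivity in the variable $s=-t$ to get $\sum_n d_n(E) \re^{n_i s}\big|$, hmm.

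Let me instead phrase the lower bound cleanly: define $\ell_t(E) = \sum_n \dim(H^n E)\,\re^{n t}$ (note the sign matching the $\re^{n_i t}$ in the infimum). For a triangle $A\to B\to C$, the long exact sequence gives $\dim H^n B \le \dim H^n A + \dim H^n C$, so $\ell_t(B) \le \ell_t(A) + \ell_t(C)$ (all terms positive, evaluated at a fixed real $t$). Since $\ell_t(k[n]) = \re^{-nt}$... no: $H^m(k[n]) = k$ iff $m=-n$, so $\ell_t(k[n]) = \re^{-nt}$, whereas the infimand has $\re^{n_i t}$ with cone $E_1[n_i]=k[n_i]$. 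The resolution is simply that the paper's convention is $E_1[n_i]$ contributes $\re^{n_i t}$, and the cohomology of $k[n_i]$ sits in degree $-n_i$, so the matching invariant is $\wt\ell_t(E) := \sum_n \dim(H^n E)\,\re^{-nt}$, which satisfies $\wt\ell_t(k[n]) = \re^{-nt}$ — still a mismatch in sign of exponent. I will simply note $\delta_t(k, k[n]) \le \re^{nt}$ trivially and $\ge \re^{nt}$ via an invariant $\mu_t$ with $\mu_t(k[n])=\re^{nt}$ and subadditivity — such $\mu_t$ is $\mu_t(E)=\sum \dim(H^{-n}E)\re^{nt} = \sum\dim(H^n E)\re^{-nt}$, wait that gives $\mu_t(k[n]) = \re^{-n\cdot t}$ again. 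The sign bookkeeping is the only genuinely fiddly point; once fixed, one gets $\delta_t(k,E)\ge$ (this invariant applied to $E$, using $\mu_t(E)\le \mu_t(E\oplus E')\le\sum\re^{n_i t}$ over the tower), which combined with the upper bound closes the proof. I would present it with the convention $\mu_t(E) = \sum_n \dim(H^n E)\,\re^{-nt}$ and check against Definition \ref{complexity} that the shift $[n_i]$ on $E_1=k$ produces cohomology in degree $-n_i$ so that $\mu_t(E_1[n_i]) = \re^{n_i t}$, matching the infimand term by term; subadditivity along the tower then gives $\mu_t(E\oplus E')\le\sum_i\re^{n_i t}$ and $\mu_t(E)\le\mu_t(E\oplus E')$ by nonnegativity of coefficients, yielding the lower bound $\delta_t(k,E)\ge\mu_t(E) = \sum_n\dim(H^n E)\re^{-nt}$.
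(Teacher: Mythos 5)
Your proposal is correct and follows essentially the same route as the paper: the upper bound comes from the decomposition $E\cong\bigoplus_n H^n(E)[-n]$, and the lower bound from the subadditivity $\dim H^nE_2\leq\dim H^nE_1+\dim H^nE_3$ along exact triangles, applied up the tower. Despite the sign detours in the middle, your final bookkeeping is the right one: since $H^{-n_i}(k[n_i])=k$, the invariant $\mu_t(E)=\sum_n\dim(H^nE)\re^{-nt}$ satisfies $\mu_t(k[n_i])=\re^{n_i t}$, matching the infimand exactly.
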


\bpr
Any object in $\mc T$ is isomorphic to its cohomology,
which is a direct sum with $\dim (H^n E)$ copies of $k[-n]$ for each $n$.
This shows the inequality ``$\leq$''.
For the reverse inequality, use the fact that for any exact triangle $E_1\to
E_2\to E_3\to E_1[1]$ we have $\dim H^nE_2\leq \dim H^nE_1+\dim H^n E_3$.
\epr

\subsection{Entropy of exact endofunctors} \label{subsecEntropy}

We now come to the main definition of this section.
It is based on the observation that if $F$ is an endofunctor of a triangulated
category $\mc T$ with generator $G$, then $\delta_t(G,F^nG)$ grows at most
exponentially, and the growth constant is independent of the choice of
generator.

\begin{df} \label{definition of entropy} Let  $F:\mc T \rightarrow \mc T$ be an
exact endofunctor of a triangulated category $\mc T$ with generator $G$.
The \emph{entropy of $F$} is the function $h_t(F):\RR\to[-\infty,+\infty)$ of
$t$ given by
\ben h_t(F)= \lim_{n\rightarrow \infty}\frac{1}{n}\log\delta_t(G,F^nG). \een
\end{df}

The next lemma shows that $h_t(F)$ is well-defined.

\begin{lemma}
With $\mc T, F$ as in the definition, the limit
$\lim_{n\rightarrow \infty}\frac{1}{n}\log(\delta_t(G,F^nG))$
exists in $[-\infty,+\infty)$ for every $t$ and is independent of the choice
of generator $G$.
\end{lemma}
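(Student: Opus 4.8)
The plan is to prove existence of the limit via Fekete's subadditivity lemma applied to the sequence $a_n = \log \delta_t(G, F^n G)$, and then to prove independence of the generator by a sandwiching argument using the triangle inequality from Proposition~\ref{complexityInequalities}(a).

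\textbf{Existence of the limit.} First I would fix $t \in \RR$ and set $a_n = \log \delta_t(G, F^n G) \in [-\infty, +\infty)$. The key claim is that $a_{n+m} \le a_n + a_m$, i.e. the sequence $(a_n)$ is subadditive. To see this, apply the triangle inequality (Proposition~\ref{complexityInequalities}(a)) with $E_1 = G$, $E_2 = F^n G$, $E_3 = F^{n+m} G$ to get $\delta_t(G, F^{n+m}G) \le \delta_t(G, F^n G)\,\delta_t(F^n G, F^{n+m} G)$. Then the retraction property (Proposition~\ref{complexityInequalities}(c)), applied to the exact functor $F^n$, gives $\delta_t(F^n G, F^{n+m} G) = \delta_t(F^n G, F^n(F^m G)) \le \delta_t(G, F^m G)$. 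Combining and taking logarithms yields $a_{n+m} \le a_n + a_m$. By Fekete's lemma, $\lim_{n\to\infty} a_n/n$ exists in $[-\infty, +\infty)$ and equals $\inf_n a_n/n$. One should also note $a_n < +\infty$ for all $n$: since $G$ is a split-generator, $F^n G$ lies in the thick subcategory it generates, so $\delta_t(G, F^n G) < \infty$ by the remark following Definition~\ref{complexity}; this keeps the $\inf$ from being the degenerate $+\infty$ and makes the limit lie in $[-\infty,+\infty)$.

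\textbf{Independence of the generator.} Suppose $G$ and $G'$ are both split-generators of $\mc T$, and write $h_t^G(F)$, $h_t^{G'}(F)$ for the corresponding limits. By the triangle inequality again,
\begin{equation*}
\delta_t(G', F^n G') \le \delta_t(G', G)\,\delta_t(G, F^n G')\,, \qquad
\delta_t(G, F^n G') \le \delta_t(G, F^n G)\,\delta_t(F^n G, F^n G')\,,
\end{equation*}
and by the retraction property $\delta_t(F^n G, F^n G') \le \delta_t(G, G')$. Since both $G, G'$ generate, the quantities $\delta_t(G, G')$ and $\delta_t(G', G)$ are finite constants $c, c'$ independent of $n$. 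Hence $\delta_t(G', F^n G') \le c' c\, \delta_t(G, F^n G)$, so $a_n' \le \log(c c') + a_n$, giving $h_t^{G'}(F) \le h_t^G(F)$ after dividing by $n$ and letting $n\to\infty$. The reverse inequality follows by symmetry, so the two limits coincide.

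\textbf{Main obstacle.} The only genuine subtlety is bookkeeping with $\pm\infty$: the values $\delta_t$ can be $0$ (if $F^n G = 0$ for some $n$, forcing $a_n = -\infty$) and the logarithms can be $-\infty$, so Fekete's lemma must be invoked in the form valid for sequences in $[-\infty,+\infty)$, and the multiplicative inequalities above must be read with the conventions $0 \cdot (+\infty)$ not arising (it does not, since all the ``crossing'' constants $\delta_t(G,G')$ etc.\ are finite and positive). Everything else is a direct consequence of parts (a) and (c) of Proposition~\ref{complexityInequalities}, so I do not expect any real difficulty beyond this.
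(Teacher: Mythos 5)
Your proposal is correct and follows essentially the same route as the paper: subadditivity of $\log\delta_t(G,F^nG)$ via the triangle inequality together with the retraction property applied to $F^n$, then Fekete's lemma, and generator-independence by sandwiching with the finite constants $\delta_t(G,G')$, $\delta_t(G',G)$. Your extra bookkeeping about finiteness of $\delta_t(G,F^nG)$ and the $-\infty$ degenerate case is a harmless refinement of what the paper leaves implicit.
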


\bpr
Using Proposition \ref{complexityInequalities},
\be \delta_t(G,F^{m+n}(G)) \leq \delta_t(G,F^{m}(G))
\delta_t(F^m(G),F^{m+n}(G)) \leq \delta_t(G,F^{m}(G))
\delta_t(G,F^{n}(G)) \ee
hence existence of the limit follows from Fekete's Lemma: for any
subadditive sequence $\{a_n\}_{n \geq 1}$ we have $\lim_{n\rightarrow
\infty}\frac{a_n}{n}=\inf\{\frac{a_n}{n}\vert n \geq 1\}$.
Furthermore, if $G,G'$ are generators then
\begin{align}
\delta_t(G',F^n(G')) &\leq
\delta_t(G',G) \delta_t(G,F^n(G))\delta_t(F^n(G),F^n(G')) \\
&\leq  \delta_t(G',G) \delta_t(G,G') \delta_t(G,F^n(G))
\end{align}
which implies the second claim.
\epr

We conclude this subsection with some remarks on the behaviour of
$h_t$ with respect to composition of functors. Clearly, $h_t$ is
constant on conjugacy classes, and $h_t(F^n)=nh_t(F)$ for $n\ge
1$. If $\mc T\neq 0$, then $h_0(\mathrm{id}_{\mc T})=0$, but this
may fail for other values of $t$. In general, nothing can be said
about $h_t(F\circ H)$ from $h_t(F)$ and $h_t(H)$ alone, but if $F$
and $H$ commute, then $h_t(F\circ H)\leq h_t(F)+h_t(H)$.

\subsection{The case of saturated \texorpdfstring{$A_{\infty}$}{\space}-categories}
\label{subsecSaturated}

For background on $A_{\infty}$-categories see for example \cite{lefevre} and
\cite{ks2009}.
We will work over a fixed field $k$.
An $A_\infty$-category $\mc C$ is \emph{triangulated} if every compact object in
$\mathrm{Mod}(\mc C)$ is quasi-representable.
The homotopy category, $H^0\mc C$, of a triangulated $A_\infty$-category is
triangulated in the sense of Verdier.
An $A_\infty$-category is \emph{saturated} if it is
triangulated and Morita-equivalent to a smooth and compact $A_\infty$-algebra
$A$.

In the case of saturated $A_\infty$-categories one can, for
the purpose of computing the entropy, replace $\delta_t(A,B)$ with the Poincar\'e
polynomial of $\mathrm{Ext}^*(A,B)$ in $\re^{-t}$.

\begin{theorem} \label{entropyPoincareSeries}
Let $\mc C$ be a saturated $A_\infty$-category and $F$ an endofunctor of $\mc
C$. Then
\ben
h_t(F)=\lim_{N\to\infty}\frac{1}{N}\log\sum_{n\in\ZZ}\dim\mathrm{Ext}^n(G,F^N
G)\re^{-nt}
\een
for any generator $G$ of $\mc C$.
\end{theorem}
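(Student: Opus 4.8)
The plan is to show that, for a saturated $A_\infty$-category, the complexity $\delta_t(G, E)$ is \emph{sandwiched} between two expressions built from the Poincar\'e polynomial of $\mathrm{Ext}^*(G,E)$, with multiplicative constants that are independent of $E$; applying this to $E = F^N G$ and taking the $n$-th root in the limit will then wash out the constants and yield the claimed formula.

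The easy direction is the upper bound $\delta_t(G,E) \le P_t(\mathrm{Ext}^*(G,E))$, where I write $P_t(V^*) = \sum_n \dim V^n \, \re^{-nt}$. When $G$ is a generator of a saturated category, $E$ is (up to a summand) built from $G$ using the morphisms in $\mathrm{Ext}^*(G,E)$ — concretely, one resolves $E$ by a twisted complex / one-sided Postnikov tower whose graded pieces are shifted copies of $G$ indexed exactly by a basis of $\mathrm{Ext}^*(G,E)$ (this is the standard statement that the Yoneda module of $E$ over the dg-algebra $\mathrm{REnd}(G)$ has a minimal model). Each copy of $G[{-n}]$ contributes $\re^{nt}$ wait — sign: a basis element in degree $n$ gives a summand $G[-n]$ contributing $\re^{nt}\cdot\re^{-2nt}$... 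I will be careful with the shift convention so that the total is precisely $\sum_n \dim \mathrm{Ext}^n(G,E)\,\re^{-nt}$, using Lemma~\ref{complexity in Dbvec} as the model case. This gives ``$\le$'' in the theorem directly (no limit needed on this side, only monotonicity of $\tfrac1N\log$).

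For the lower bound I need: there is a constant $C_t \ge 1$, depending only on $G$ and $t$, with $P_t(\mathrm{Ext}^*(G,E)) \le C_t \cdot \delta_t(G,E)$ for all $E$ in the thick subcategory generated by $G$. The idea is that $\mathrm{Ext}^*(G,E)$ is itself an object of $D^b(k)$, namely $\delta_t(k, \mathrm{Ext}^*(G, E)) = P_t(\mathrm{Ext}^*(G,E))$ by Lemma~\ref{complexity in Dbvec}; and the functor $E \mapsto \mathrm{Hom}^*(G, E)$ (or better, $E\mapsto \mathrm{REnd}(G)$-module $\mathrm{RHom}(G,E)$, then forget to $D^b(k)$) is exact, so by the Retraction property (Proposition~\ref{complexityInequalities}(c)) and the Triangle inequality, $\delta_t(k, \mathrm{RHom}(G,E)) \le \delta_t(\mathrm{RHom}(G,G), \mathrm{RHom}(G,E)) \cdot \delta_t(k, \mathrm{RHom}(G,G)) \le \delta_t(G,E)\cdot \delta_t(k,\mathrm{RHom}(G,G))$. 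Here $\delta_t(k,\mathrm{RHom}(G,G)) = P_t(\mathrm{Ext}^*(G,G)) =: C_t$ is a finite constant (finiteness is exactly smoothness+compactness, i.e. saturation). Combining with the upper bound, $C_t^{-1} P_t(\mathrm{Ext}^*(G,F^N G)) \le \delta_t(G, F^N G) \le P_t(\mathrm{Ext}^*(G,F^N G))$, so $\tfrac1N \log \delta_t(G,F^N G)$ and $\tfrac1N\log P_t(\mathrm{Ext}^*(G,F^N G))$ have the same limit.

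The main obstacle is the upper-bound step: producing, from an object $E$ in the thick closure of $G$ in a saturated $A_\infty$-category, a Postnikov tower with shifted copies of $G$ whose \emph{count in each degree equals} $\dim\mathrm{Ext}^n(G,E)$ — not merely some finite tower. This is where saturation (really: $G$ a generator, $\mathrm{REnd}(G)$ smooth and proper) is essential; the cleanest route is to pass through the derived Morita equivalence $\mc C \simeq \mathrm{Perf}(\mathrm{REnd}(G))$ and take a \emph{minimal} $A_\infty$-model of the perfect module $\mathrm{RHom}(G,E)$, whose generators over the ground field are canonically $H^*(\mathrm{RHom}(G,E)) = \mathrm{Ext}^*(G,E)$; one must check the twisted complex over $\mathrm{REnd}(G)$ obtained this way transports back to an honest Postnikov tower in $H^0\mc C$ built from $G$, which is routine but needs the bounded-degree / finite-total-dimension input again. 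A minor subtlety to flag: a priori $\delta_t$ could be $+\infty$, but for $E$ in $\langle G\rangle_{\mathrm{thick}}$ it is finite and $\mathrm{Ext}^*(G,E)$ is finite-dimensional (saturation), so all quantities in sight are finite and the squeeze is legitimate; I should also note the limits exist by Fekete/subadditivity as in the previous lemma, so ``same limit'' is meaningful.
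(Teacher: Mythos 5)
Your ``lower bound'' half (that $\sum_n\dim\mathrm{Ext}^n(G,E)\re^{-nt}=\delta_t(k,R\Hom(G,E))\le C_t\,\delta_t(G,E)$ via the retraction property of $R\Hom(G,\_)$ and Lemma~\ref{complexity in Dbvec}) is exactly the paper's second inequality and is fine --- though note that finiteness of $C_t=\delta_t(k,R\Hom(G,G))$ uses only properness, not smoothness. The genuine gap is in what you call the easy direction. The inequality $\delta_t(G,E)\le\sum_n\dim\mathrm{Ext}^n(G,E)\re^{-nt}$ with constant $1$ is simply false: take $\mc C=D^b(kQ)$ for the $A_2$-quiver, $G=A=kQ$ the free module and $E=S_1$ the non-projective simple. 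Then $\mathrm{Ext}^*(G,S_1)$ is one-dimensional, concentrated in degree $0$, so the right-hand side at $t=0$ equals $1$, while $\delta_0(G,S_1)=2$, since a tower of length $1$ would force $S_1$ to be a direct summand of a shift of $G$, which it is not. The underlying error is the claim that a minimal $A_\infty$-model of the Yoneda module $R\Hom(G,E)$ over $\mathrm{REnd}(G)$ yields a Postnikov tower on shifted copies of $G$ with multiplicities $\dim\mathrm{Ext}^n(G,E)$: the minimal model is small as a complex of $k$-vector spaces, but the number of \emph{free} modules needed to build it as a twisted complex is governed by a projective-type resolution over $\mathrm{REnd}(G)$, not by $\dim H^*$, and can strictly exceed these multiplicities (as the example shows). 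So the step you flag as ``routine'' is precisely where the real content lies, and it is not a consequence of finite-dimensionality.

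With a multiplicative constant allowed the needed bound does hold, but its proof is where smoothness genuinely enters, and it is the paper's argument: factor
\begin{equation*}
\delta_t(G,E)\;\le\;\delta_t\bigl(G,\,G\otimes R\Hom(G,E)\bigr)\,\delta_t\bigl(G\otimes R\Hom(G,E),\,E\bigr)
\;\le\;\delta_t\bigl(k,R\Hom(G,E)\bigr)\,\delta_t\bigl(G\otimes R\Hom(G,\_),\,\mathrm{id}_{\mc C}\bigr),
\end{equation*}
using the retraction property of $G\otimes\_:D^b(k)\to\mc C$ for the first factor and of the evaluation functor $\Phi\mapsto\Phi(E)$ for the second. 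The second factor is a constant independent of $E$, and its finiteness is equivalent to $\mathrm{id}_{\mc C}$ lying in the thick subcategory of $\mathrm{Fun}(\mc C,\mc C)$ generated by $G\otimes R\Hom(G,\_)$, i.e.\ to the diagonal bimodule $A$ being split-generated by $A\otimes_k A^{\mathrm{op}}$ --- this is smoothness. Replacing your constant-$1$ claim by this estimate repairs the squeeze argument; the rest of your proposal (setting $E=F^NG$, taking $\tfrac1N\log$, and using Fekete for existence of the limit) then goes through as in the paper.
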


\bpr
Note first that
\be \delta_t(k,R\Hom(G,F^NG))=\sum_{n}\dim\mathrm{Ext}^n(G,F^NG)\re^{-nt} \ee
by Lemma \ref{complexity in Dbvec}.
We will show that there exist $C_1,C_2:\mathbb R\to\mathbb R_{>0}$, depending on
$G$, such that
\be \label{distEstimate}
C_1(t)\delta_t(G,E) \leq \delta_t(k,R\Hom(G,E)) \leq C_2(t)\delta_t(G,E)
\ee
for every $E\in\mc C$.
The theorem follows from \eqref{distEstimate} by setting $E=F^NG$, taking
the logarithm, dividing by $N$, and passing to the limit $N\to\infty$.

For the second inequality, apply Proposition~\ref{complexityInequalities} to the
functor $R\Hom(G,\_):\mc C\to D^b(k)$, which is well defined by local properness
of $\mc C$.
We get
\begin{align}
\delta_t(k,R\Hom(G,E)) &\leq \delta_t(k,R\Hom(G,G))\delta_t(R\Hom(G,G),R\Hom(G,E))\\
  &\leq \delta_t(k,R\Hom(G,G))\delta_t(G,E)
\end{align}
where the first factor is a function $C_2(t):\mathbb R\to\mathbb R_{>0}$
independent of $E$.
Note that we did not use the assumption that $\mc C$ is smooth.

On the other hand
\begin{align}
\delta_t(G,E)
 &\leq\delta_t(G,G\otimes R\Hom(G,E)) \delta_t(G\otimes R\Hom(G,E),E)\\
 &\leq\delta_t(k,R\Hom(G,E)) \delta_t(G \otimes R\Hom(G,\_),\mathrm{id}_{\mc C})
\end{align}
by the retraction map property of the functors $G\otimes\_:D^b(k)\to \mc C$ and
$\Phi\mapsto \Phi(E):\mathrm{Fun}(\mc C,\mc C)\to\mc C$.
It remains to show that the second factor, $\delta_t(G \otimes
R\Hom(G,\_),\mathrm{id}_{\mc C})$ is in fact finite, i.e. that $\mathrm{id}_{\mc
C}$ lies in the subcategory of $\mathrm{Fun}(\mc C,\mc C)$ generated by $G\otimes R\Hom(G,\_)$.
But this is just equivalent to smoothness of $\mc C$.
Indeed, let $A=\mathrm{End}(G)$, then the functor $G\otimes R\Hom(G,\_)$
corresponds to the bimodule $A\otimes_k A^{\mathrm{op}}$ and $\mathrm{Id}_{\mc
C}$ corresponds to the diagonal bimodule $A$.
This completes the proof of \eqref{distEstimate} and the theorem.
\epr

Suppose that $\mc C$ is a saturated $A_\infty$-category over $\mathbb C$. Since
$\mc C$ is smooth and proper, its Hochschild homology, $HH_*(\mc C)$, is
finite-dimensional.
Any endofunctor $F$ of $\mc C$ induces a linear map  on $HH_*(\mc C)$
and we may consider the logarithm of its spectral radius as a kind of ``homological
entropy''.
We will show that, under a certain generic condition discussed in the lemma
below, this number is bounded above by $h_0(F)$.

\begin{lemma} \label{str_radius}
Let $V$ be a finite-dimensional $\mathbb{Z}/(2)$-graded $\mathbb C$-vector
space, $A=A_+\oplus A_-$ a homogeneous endomorphism of $V$.
Suppose that $A_+$, $A_-$ do not have the same eigenvalues, with multiplicity,
on $\{|\lambda|=\rho(A)\}$. Then
\ben \limsup_{n\to\infty}\left|\mathrm{sTr}A^n\right|^{1/n}=\rho(A) \een
\end{lemma}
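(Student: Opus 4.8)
The plan is to reduce to a standard statement about the asymptotics of traces of powers of a linear map via the supertrace $\mathrm{sTr}\,A^n = \mathrm{tr}\,A_+^n - \mathrm{tr}\,A_-^n$. First I would write $\rho = \rho(A) = \max(\rho(A_+),\rho(A_-))$ and split the spectrum of each $A_\pm$ into the eigenvalues on the critical circle $\{|\lambda| = \rho\}$ and those strictly inside. Writing each $A_\pm$ in Jordan form, the contribution of the inside eigenvalues to $\mathrm{tr}\,A_\pm^n$ is $O((\rho-\varepsilon)^n)$ for some $\varepsilon>0$, hence negligible after taking $n$th roots. So $\mathrm{sTr}\,A^n = \sum_{j} m_j \mu_j^n + O((\rho-\varepsilon)^n)$, where $\mu_j$ are the distinct eigenvalues with $|\mu_j| = \rho$ appearing in $A_+\oplus A_-$, and $m_j \in \ZZ$ is the (signed) difference of the multiplicities of $\mu_j$ in $A_+$ and in $A_-$. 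Here the hypothesis enters: since $A_+$ and $A_-$ do not have the same eigenvalues with multiplicity on the critical circle, at least one $m_j$ is nonzero.

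The key point is then the lower bound $\limsup_n |\mathrm{sTr}\,A^n|^{1/n} \ge \rho$; the upper bound $\limsup_n |\mathrm{sTr}\,A^n|^{1/n} \le \rho$ is immediate since $|\mathrm{tr}\,A_\pm^n| \le (\dim V)\,\rho(A_\pm)^n(1+o(1))$ (again Jordan form, polynomial-times-$\rho^n$ growth of each entry). For the lower bound, write $\mu_j = \rho\, e^{i\theta_j}$ and consider $S_n = \sum_j m_j e^{i n\theta_j}$, so that $\mathrm{sTr}\,A^n = \rho^n S_n + O((\rho-\varepsilon)^n)$. It suffices to show $\limsup_n |S_n| > 0$. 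This is a classical fact about almost periodic sequences: if all $m_j = 0$ we are in the excluded case, and if some $m_j \ne 0$ then by considering the mean value $\lim_{N\to\infty} \frac{1}{N}\sum_{n=1}^N S_n \overline{e^{in\theta_k}} = m_k$ (using that the $\theta_j$ are distinct mod $2\pi$, so distinct characters $n\mapsto e^{in\theta_j}$ are orthonormal in the Cesàro/mean sense) we get that $|S_n|$ cannot tend to $0$ — indeed $\limsup_n |S_n| \ge \max_k |m_k| \ge 1$ since a nonzero integer has absolute value at least one. Combining, $|\mathrm{sTr}\,A^n|^{1/n} \ge \big(\rho^n(|S_n| - C(\rho-\varepsilon)^n\rho^{-n})\big)^{1/n}$ along the subsequence where $|S_n|$ is bounded below, and this tends to $\rho$.

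The main obstacle is the lower bound for $\limsup_n |S_n|$, i.e. controlling cancellation among the critical eigenvalues when the $\theta_j$ are irrational or irrationally related — this is exactly where one cannot argue naively and must invoke the orthogonality of distinct exponentials under the mean $M(f) = \lim_{N} \frac1N \sum_{n=1}^N f(n)$. One clean way to package this: the functions $n \mapsto e^{in\theta_j}$ are linearly independent over $\CC$ in a strong sense, namely $M(|S_n|^2) = \sum_j |m_j|^2 \ge 1$, which forces $\limsup_n |S_n|^2 \ge \sum_j|m_j|^2 \ge 1$ and in particular $\limsup_n |S_n| \ge 1$. (If one prefers to avoid even the mean-value machinery, one can instead use simultaneous Diophantine approximation / Weyl equidistribution on the torus $(\theta_1,\dots,\theta_r)$ to find $n$ with all $e^{in\theta_j}$ simultaneously close to $1$, giving $S_n$ close to $\sum_j m_j$; but one must then also handle the degenerate case $\sum_j m_j = 0$ by instead steering toward a point where the $m_j$ do not cancel, which the orthogonality argument sidesteps automatically.) Once the lower bound $\limsup |S_n| \ge 1$ is in hand, assembling the two inequalities gives the claimed equality.
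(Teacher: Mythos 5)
Your proof is correct, but it takes a genuinely different route from the paper's. Both arguments start from the same structural fact $\mathrm{sTr}\,A^n=\sum_k\alpha_k^n-\sum_l\beta_l^n$ (eigenvalues of $A_\pm$ with multiplicity), but the paper then packages this into the rational generating function $\sum_{n\ge 0}\mathrm{sTr}(A^n)z^n=\sum_k(1-\alpha_kz)^{-1}-\sum_l(1-\beta_lz)^{-1}$ and invokes Cauchy--Hadamard: the radius of convergence is $\bigl(\limsup_n|\mathrm{sTr}\,A^n|^{1/n}\bigr)^{-1}$ and equals the distance to the nearest pole, while the hypothesis guarantees that at least one pole on $\{|z|=1/\rho(A)\}$ is not cancelled and no pole lies closer to the origin. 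Thus the cancellation issue you handle analytically is settled purely algebraically (a pole survives iff the multiplicities of some critical eigenvalue differ), and the equidistribution/character-orthogonality step is avoided altogether. Your route instead isolates the critical-circle contribution $S_n=\sum_j m_je^{in\theta_j}$ and uses the Ces\`aro mean $M(|S_n|^2)=\sum_j m_j^2\ge 1$ to rule out decay of the almost periodic part; this is more hands-on, yields the slightly more quantitative conclusion that $|\mathrm{sTr}\,A^n|$ is comparable to $\rho(A)^n$ along a subsequence, and avoids meromorphic continuation, at the cost of the extra exponential-sum lemma. (Both arguments tacitly set aside the trivial case $\rho(A)=0$, where $A$ is nilpotent and the claim is immediate; the paper does this explicitly, and you should add the same one-line remark since your parametrization $\mu_j=\rho e^{i\theta_j}$ presupposes $\rho>0$.)
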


\bpr
We may assume that $\rho(A)\neq 0$.
Let $\alpha_1,\ldots,\alpha_m$ be the eigenvalues of $A_+$ and
$\beta_1,\ldots,\beta_n$ those of $A_-$.
Consider the identity
\be \label{str_series} \sum_{n=0}^{\infty}\mathrm{sTr}A^nz^n=
\sum_{k=1}^m\frac{1}{1-\alpha_kz}-\sum_{l=1}^n\frac{1}{1-\beta_lz}\ee
which holds for $|z|$ sufficiently small.
If $R$ denotes the radius of convergence of the series on the left
hand side of \eqref{str_series}, then
\be R^{-1}=\limsup_{n\to\infty}\left|\mathrm{sTr}A^n\right|^{1/n}.\ee
On the other hand, it follows from the conditions on $A$ that
\eqref{str_series}, extended meromorphically to $\mathbb C$, has a pole on
$\{|z|=1/\rho(A)\}$, and no poles closer to the origin.
Hence $R=\rho(A)$, and the proof of the lemma is complete.
\epr

\begin{theorem} \label{hom_entropy_bound}
Let $\mathcal{C}$ be a saturated $A_\infty$-category over $\mathbb C$ and
$F:\mathcal C \to \mathcal C$ a functor.
Assume that the induced map on Hochschild homology, $HH_*(F)$, satisfies the
condition of Lemma \ref{str_radius}, then
\ben \log\rho(HH_*(F))\leq h_0(F). \een
\end{theorem}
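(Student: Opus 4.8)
The plan is to bound the exponential growth rate of $|\mathrm{sTr}\,HH_*(F)^N|$ above by $h_0(F)$, and then invoke Lemma~\ref{str_radius} to upgrade the supertrace growth rate to the spectral radius $\rho(HH_*(F))$. The link between the two sides is the categorical incarnation of the Lefschetz/Cardy formula: for a saturated $A_\infty$-category the Hochschild homology $HH_*(\mc C)$ carries a nondegenerate pairing (Mukai/Serre pairing), and for an endofunctor $F$ the supertrace of the induced map $HH_*(F)$ can be expressed as a ``categorical Lefschetz number'' built from the $F$-twisted Hochschild complex, equivalently as $\chi$ of the graph bimodule paired against the diagonal. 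First I would recall this and reduce to showing that $\mathrm{sTr}\,HH_*(F^N)$ is, up to a sign, the Euler characteristic $\sum_n (-1)^n \dim \mathrm{Ext}^n(G, F^N G \otimes_A \Gamma)$ for an appropriate fixed object/bimodule depending only on $\mc C$ and the generator $G$ — the point being that no extra $N$-dependence enters except through $F^N G$.

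Next I would estimate this Euler characteristic by the corresponding Poincaré polynomial at $t=0$. Concretely, $\big|\sum_n (-1)^n \dim \mathrm{Ext}^n(G, F^N G)\big| \le \sum_n \dim \mathrm{Ext}^n(G, F^N G) = \delta_0(k, R\Hom(G, F^N G))$, and by the estimate \eqref{distEstimate} established in the proof of Theorem~\ref{entropyPoincareSeries} this is at most $C_2(0)\,\delta_0(G, F^N G)$. Absorbing the intermediate fixed object (the piece of $HH_*$ beyond a plain $\mathrm{Ext}$-group, i.e. the bimodule $\Gamma$) costs only another multiplicative constant independent of $N$, again by the triangle inequality of Proposition~\ref{complexityInequalities}. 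Taking $\tfrac1N\log$ and passing to the limit $N\to\infty$, the constants die and one gets
\[
\limsup_{N\to\infty}\frac{1}{N}\log\big|\mathrm{sTr}\,HH_*(F)^N\big| \;\le\; h_0(F).
\]
Finally, Lemma~\ref{str_radius} applied to $A = HH_*(F)$ (with the $\ZZ/2$-grading by homological parity, and using the hypothesis that the condition of that lemma is met) identifies the left-hand side with $\log\rho(HH_*(F))$, giving $\log\rho(HH_*(F)) \le h_0(F)$.

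The main obstacle is the first step: making precise, and correctly normalized, the statement that $\mathrm{sTr}\,HH_*(F^N)$ equals an alternating sum of $\mathrm{Ext}$-dimensions of the form $\chi\big(R\Hom(G, F^N G \otimes_A \Gamma)\big)$ with $\Gamma$ a fixed perfect $A$-bimodule. This is the categorical Hochschild–Lefschetz trace formula for endofunctors of smooth proper categories; one must check that $F^N$ acts on the Hochschild chain complex compatibly with a quasi-representing bimodule for $F$, that smoothness and properness guarantee the relevant complexes are perfect (so the Euler characteristic makes sense and equals the supertrace on homology), and that the resulting bound is genuinely uniform in $N$. Once the trace is pinned down in these terms, the remaining inequalities are formal consequences of the complexity calculus of Subsection~\ref{subsecComplexity} exactly as in Theorem~\ref{entropyPoincareSeries}.
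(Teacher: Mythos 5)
Your overall strategy is the paper's: bound the growth of $\left|\mathrm{sTr}\,HH_*(F^N)\right|$ by $\re^{h_0(F)}$ via the complexity calculus, then apply Lemma~\ref{str_radius} to convert the supertrace growth rate into $\log\rho(HH_*(F))$. That last conversion, and the use of $N$-independent constants killed by $\frac1N\log$, are exactly as in the paper.

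The one genuine gap is the step you yourself flag as ``the main obstacle''. You propose to rewrite $\mathrm{sTr}\,HH_*(F^N)$ as $\pm\chi\bigl(R\Hom(G,F^NG\otimes_A\Gamma)\bigr)$ for a single fixed perfect bimodule $\Gamma$, and then estimate $\chi$ of an $\mathrm{Ext}$-group in $\mc C$ via \eqref{distEstimate}. Such a clean identity with one fixed $\Gamma$ is not what the trace formula gives, and it is not needed: the paper invokes Lunts' Lefschetz fixed point theorem for Hochschild homology in the form $\chi\bigl(R\Hom(F^N,S)\bigr)=\mathrm{sTr}\,HH_*(F^N)$, where $S$ is the Serre functor and $R\Hom$ is taken in $\mathrm{Fun}(\mc C,\mc C)$, and then runs the complexity estimates entirely inside the functor category. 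Concretely: $\left|\chi(R\Hom(F^N,S))\right|\le\delta_0(k,R\Hom(F^N,S))$; the triangle inequality against the kernel $G\otimes R\Hom(G,\_)$ and the retraction property of $R\Hom(\_,S)$ reduce this to $\delta_0(G\otimes R\Hom(G,\_),F^N)$ up to an $N$-independent factor; a second application of the triangle inequality together with the retraction property of $F^N\circ\_$ bounds that by $\delta_0(G,F^NG)\cdot\delta_0(G\otimes R\Hom(G,\_),\mathrm{id}_{\mc C})$, the last factor being finite precisely by smoothness. Your sketch gestures at all of these ingredients (perfectness from smooth and proper, uniformity in $N$), but as written the reduction to a fixed $\Gamma$ and a single $\mathrm{Ext}$-group in $\mc C$ would at best require decomposing a resolution of the diagonal into a signed combination, which is more delicate than quoting Lunts' theorem directly; until that step is pinned down in a correct form, the proof is incomplete.
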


\bpr Let $S$ denote the Serre functor of $\mc C$. By Lemma
\ref{complexity in Dbvec}, \be
\delta_0(k,R\Hom(F^N,S))=\sum_n\dim\mathrm{Ext}^n(F^N,S)\ee thus
\be \left|\chi(R\Hom(F^N,S))\right|\leq \delta_0(k,R\Hom(F^N,S)).
\ee Applying the triangle inequality,
\begin{align} \nonumber \delta_0(k,& R\Hom(F^N,S)) \\
&\leq \delta_0(k,R\Hom(G\otimes R\Hom(G,\_),S))
\delta_0(R\Hom(G\otimes R\Hom(G,\_),S),R\Hom(F^N,S)).
\end{align}
The first factor is independent of $N$, while the second is bounded above by
$\delta_0(G\otimes R\Hom(G,\_),F^N)$ as follows from the retraction map property
of the functor $R\Hom(\_,S):\mathrm{Fun}(\mc C,\mc C)\to D^b(k
\mhyphen\mathrm{Vect})^{\mathrm{op}}$.
Now,
\begin{align} \nonumber
\delta_0(G\otimes & R\Hom(G,\_),F^N) \\
&\leq \delta_0(G\otimes R\Hom(G,\_),F^N\circ(G\otimes R\Hom(G,\_)))
\delta_0(F^N\circ(G\otimes R\Hom(G,\_)),F^N).
\end{align}
By the retraction property of $F^N\circ\_$, the second factor is bounded above
by $\delta_0(G\otimes R\Hom(G,\_),\mathrm{id}_{\mc C})$ which is independent of
$N$ and finite by smoothness of $\mathcal C$.
The first factor is bounded above by $\delta_0(G,F^N G)$, hence
\be \limsup_{N\to\infty}
\left|\chi(R\Hom(F^N,S))\right|^{1/N} \leq \re^{h_0(F)}. \ee
By the Lefschetz fixed point theorem for Hochschild homology \cite{lunts2012} we
have \be \chi(R\Hom(F^N,S))=\mathrm{sTr}(HH_*(F^N)) \ee
and by Lemma \ref{str_radius},
\be \limsup_{N\to\infty}
\left|\mathrm{sTr}(HH_*(F)^N)\right|^{1/N}=\rho(HH_*(F)) \ee
which completes the proof.
\epr

\begin{remark}
We suspect that the conclusion of Theorem \ref{hom_entropy_bound} holds without
the condition on $F$.
\end{remark}

\subsection{Example: semisimple categories}

Fix a finite set $X$ and a field $k$. Let $D^b(X)$ denote the
triangulated dg-category of $X$-indexed families of bounded complexes over $k$.
Any (additive, graded) endofunctor of $D^b(X)$ is isomorphic to one of
the form
\begin{equation}
\Phi_K(V) = (p_1)_*(K\otimes p_2^*V)
\end{equation}
for some $K\in D^b(X\times X)$ with trivial differential.
Such a $K$ corresponds to a matrix of Poincar\'e (Laurent--) polynomials
\begin{equation}
P_K(z)=\left(\sum_{n\in\mathbb Z}\dim(K^n_{x,y})z^n\right)_{x,y\in X}
\end{equation}
with non-negative integer coefficients.
Note that composition of functors corresponds to multiplication of
matrices.

Let $G\in D^b(X)$ be the generator with $G_x=k$ for $x\in X$.
Then
\begin{equation}
\sum_{n\in\mathbb Z}\dim\mathrm{Ext}^n(G,\Phi_K^N(G))e^{-nt}=\|P_K^N(e^{-t})\|_1
\end{equation}
where $\|A\|_1=\sum|a_{ij}|$ is the matrix $1$-norm and we use non-negativity of
$P_K^N(e^{-t})$.
We obtain
\begin{equation}
h_t(\Phi_K) = \log\rho(P_K(e^{-t}))
\end{equation}
by Gelfand's formula for the spectral radius
\begin{equation}
\rho(A) = \lim_{n\to\infty}\|A^n\|^{\frac{1}{n}}
\end{equation}
which holds for any matrix norm.

We can say a bit more about the function $\rho(P_K(x))$.
By Perron--Frobenius theory, the spectral radius of a non-negative matrix is an
eigenvalue.
Hence, if we consider the spectral curve
\begin{equation}
C=\{(x,\lambda)\in\mathbb R_{>0}\times\mathbb R\,|\,\det(P_K(x)-\lambda)=0\}
\end{equation}
then the graph of $\rho(P_K(x)):\mathbb R_{>0}\to\mathbb R$ is the maximal
branch of $C$.

\subsection{Regular maps}

Let $X$ be a smooth projective variety over $\mathbb C$, $f:X\to X$ a regular
map, and $f^*:D^b(X)\to D^b(X)$ the pullback functor.
We will show that, under the generic condition of Lemma \ref{str_radius}, the
entropy of $f^*$ is constant and equal to the logarithm of the spectral radius
of $H^*(f;\mathbb Q)$, the induced map on cohomology.
By a result of S.~Friedland \cite{friedland}, $\log\rho(H^*(f;\mathbb Q))$ is
also equal to the topological entropy of $f$, more generally for compact
K\"ahler manifolds.

We begin with a lemma which gives a sufficient condition for the entropy
$h_t(F)$ to be constant in $t$.

\begin{lemma} \label{constEntropy}
Let $\mathcal C$ be a saturated $A_{\infty}$-category and $F$ an endofunctor of
$\mathcal C$.
Suppose there exists a generator $G$ of $\mc C$ and $M\ge 0$ such that
\begin{equation*}
\mathrm{Ext}^n(G,F^NG)=0\quad \text{ for }|n|>M,N\ge 0
\end{equation*}
then $h_t(F)$ is a constant function.
\end{lemma}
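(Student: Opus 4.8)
The plan is to use Theorem~\ref{entropyPoincareSeries} to reduce the claim to a statement about the growth of a Laurent polynomial with bounded degrees. By that theorem, for the given generator $G$ we have
\[
h_t(F)=\lim_{N\to\infty}\frac{1}{N}\log\sum_{n\in\ZZ}\dim\mathrm{Ext}^n(G,F^NG)\re^{-nt},
\]
and by hypothesis the inner sum ranges only over $|n|\le M$. Write $a_n^{(N)}=\dim\mathrm{Ext}^n(G,F^NG)\ge 0$, so that the sum equals $p_N(\re^{-t})$ where $p_N(x)=\sum_{|n|\le M}a_n^{(N)}x^{-n}$ is a Laurent polynomial with nonnegative coefficients and exponents confined to $[-M,M]$.

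The key elementary observation is that for a Laurent polynomial with nonnegative coefficients and exponents in a fixed window $[-M,M]$, the value at any fixed $x>0$ is comparable, up to a constant depending only on $x$ and $M$ but not on the coefficients, to the value at $x=1$ (which is just the total sum of coefficients). Precisely, for all $x>0$,
\[
c(x)\sum_{|n|\le M}a_n^{(N)}\ \le\ \sum_{|n|\le M}a_n^{(N)}x^{-n}\ \le\ C(x)\sum_{|n|\le M}a_n^{(N)},
\]
with $c(x)=\min(x^{-M},x^{M})$ and $C(x)=\max(x^{-M},x^{M})$, both independent of $N$. Setting $x=\re^{-t}$, this sandwiches $p_N(\re^{-t})$ between $c(\re^{-t})\,p_N(1)$ and $C(\re^{-t})\,p_N(1)$. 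Taking logarithms, dividing by $N$, and letting $N\to\infty$, the bounded factors $\log c(\re^{-t})$ and $\log C(\re^{-t})$ contribute nothing in the limit, so
\[
h_t(F)=\lim_{N\to\infty}\frac{1}{N}\log p_N(1)=\lim_{N\to\infty}\frac{1}{N}\log\sum_{|n|\le M}\dim\mathrm{Ext}^n(G,F^NG),
\]
which is manifestly independent of $t$.

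There is no serious obstacle here: the only points requiring care are that the limit defining $h_t(F)$ is already known to exist by the earlier lemma (so we may manipulate it freely), that Theorem~\ref{entropyPoincareSeries} applies since $\mc C$ is saturated, and that the comparison constants genuinely do not depend on $N$ — this is exactly where the uniform bound $|n|\le M$ for all $N$ is used, and it is the hypothesis of the lemma. One should also note the degenerate possibility that $p_N(1)=0$ for all large $N$, i.e. $F^N G=0$ eventually; in that case $h_t(F)\equiv-\infty$, which is still a constant function, so the statement holds trivially. Hence the conclusion follows.
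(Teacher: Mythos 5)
Your proof is correct and follows essentially the same route as the paper: the paper likewise sandwiches each term $\dim\mathrm{Ext}^n(G,F^NG)\re^{-nt}$ between $\re^{-M|t|}$ and $\re^{M|t|}$ times $\dim\mathrm{Ext}^n(G,F^NG)$ and concludes $h_0(F)\le h_t(F)\le h_0(F)$ via Theorem~\ref{entropyPoincareSeries}. (The only blemish is a harmless sign slip in your definition $p_N(x)=\sum a_n^{(N)}x^{-n}$ versus the exponent $\re^{-nt}$ in the theorem, which does not affect the two-sided bound since the exponents lie in $[-M,M]$.)
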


\begin{proof}
Use
\begin{equation}
\dim\mathrm{Ext}^n(G,F^NG)e^{-M|t|} \le \dim\mathrm{Ext}^n(G,F^NG)e^{-nt}\le
\dim\mathrm{Ext}^n(G,F^NG)e^{M|t|}
\end{equation}
to conclude $h_0(F) \le h_t(F) \le h_0(F)$.
\end{proof}

Note that Lemma \ref{constEntropy} applies in particular to functors
preserving a bounded t-structure of finite homological dimension, as the generator can be chosen to
lie in the heart of the t-structure.

\begin{theorem} \label{entropyRegular}
Let $X$ be a smooth projective variety over $\mathbb C$, $f:X\to X$ a regular
map, and $f^*:D^b(X)\to D^b(X)$ the pullback functor.
Suppose that the induced map on cohomology, $H^*(f;\mathbb Q)$, satisfies the
condition of Lemma~\ref{str_radius}.
Then $h_t(f^*)$ is constant and equal to $\log\rho(H^*(f;\mathbb Q))$.
\end{theorem}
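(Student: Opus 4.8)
The plan is to combine Theorem~\ref{entropyPoincareSeries} with the constancy criterion of Lemma~\ref{constEntropy}, and then identify the resulting number with $\log\rho(H^*(f;\mathbb Q))$ via the Lefschetz-type argument already used in the proof of Theorem~\ref{hom_entropy_bound}, together with a matching upper bound coming from coherent cohomology. First I would choose the generator $G$. On a smooth projective variety $X$ one has a tilting-type or resolution-type generator; more simply, since $D^b(X)$ carries the standard bounded t-structure of finite homological dimension $\dim X$, any object $G$ generating $D^b(X)$ and lying in $\mathrm{Coh}(X)$ (placed in degree $0$) has $\mathrm{Ext}^n(G,E)=0$ for $|n|>\dim X$ whenever $E\in\mathrm{Coh}(X)$. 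Since $f$ is a morphism, $f^*$ is t-exact on the standard t-structure (it is exact on coherent sheaves, $f$ being flat is not needed because $f$ between smooth varieties has finite Tor-dimension — but in fact for the bound we only need that $f^*$ sends the heart into objects of bounded amplitude, which holds as $\mathrm{Tor}$-dimension of $f$ is finite). Hence $\mathrm{Ext}^n(G,(f^*)^NG)=0$ for $|n|>M$ with $M$ independent of $N$ (one can take $M=\dim X$ if one additionally arranges $(f^*)^NG\in\mathrm{Coh}(X)$, e.g. using a locally free generator; otherwise absorb the Tor-amplitude into $M$). By Lemma~\ref{constEntropy}, $h_t(f^*)$ is constant, so it suffices to compute $h_0(f^*)$.

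Next I would get the lower bound $h_0(f^*)\ge\log\rho(H^*(f;\mathbb Q))$. Here I invoke Theorem~\ref{hom_entropy_bound}: under the stated condition on $H^*(f;\mathbb Q)$, we have $\log\rho(HH_*(f^*))\le h_0(f^*)$. So the point is to identify $\rho(HH_*(f^*))$ with $\rho(H^*(f;\mathbb Q))$. By the Hochschild–Kostant–Rosenberg isomorphism $HH_*(D^b(X))\cong\bigoplus_{p,q}H^q(X,\Omega_X^p)$, and under this isomorphism the action of $f^*$ is (up to the usual $\widehat A$ or $\mathrm{Td}$ twist, which does not affect the spectrum since it is an invertible operator commuting appropriately) conjugate to the natural pullback on Dolbeault cohomology; after passing to the Hodge filtration / de Rham realization this has the same eigenvalues as $H^*(f;\mathbb C)$ acting on $H^*(X;\mathbb C)$, which has the same spectral radius as $H^*(f;\mathbb Q)$ on $H^*(X;\mathbb Q)$. (Strictly, $f^*$ on $H^{p,q}$ need not be block-diagonal with respect to the single grading by $p+q$ unless $f$ is an isomorphism, but pullback does preserve the Hodge decomposition of each $H^k$, so the eigenvalues on $HH_*$ are exactly the union of the eigenvalues of $f^*$ on the $H^{p,q}(X)$, whose spectral radius equals that of $f^*$ on $H^*(X;\mathbb C)$.) This gives $\rho(HH_*(f^*))=\rho(H^*(f;\mathbb Q))$ and hence the desired lower bound.

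For the upper bound $h_0(f^*)\le\log\rho(H^*(f;\mathbb Q))$ I would argue directly with the Poincaré-polynomial formula of Theorem~\ref{entropyPoincareSeries}, taking $G$ a locally free generator (a direct sum of line bundles from a Beilinson-type resolution of the diagonal, or $\mathcal O\oplus\mathcal O(1)\oplus\cdots$ on $\mathbb P^n$, and in general an ample-twist tilting generator where available; if no tilting generator exists one uses that any generator is a summand of a complex built from locally free sheaves and reduces to that case). Then $\mathrm{Ext}^n(G,(f^*)^NG)=H^n(X,G^\vee\otimes (f^*)^NG)$ and one estimates $\sum_n\dim H^n(X,G^\vee\otimes(f^*)^NG)$. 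The key input is that pulling back along $f^N$ multiplies Chern characters by $(f^N)^*$ on $H^*(X;\mathbb Q)$, so by Hirzebruch–Riemann–Roch the Euler characteristics $\chi(X,G_i^\vee\otimes (f^N)^*G_j)$ grow no faster than $\rho(H^*(f;\mathbb Q))^N$ up to polynomial factors; combined with the uniform bound on the amplitude (finitely many cohomological degrees) and a Serre-duality/ampleness argument controlling the individual $h^n$ by the $\chi$'s along a twist of $f^N$, one concludes $\limsup_N\big(\sum_n\dim\mathrm{Ext}^n(G,(f^*)^NG)\big)^{1/N}\le\rho(H^*(f;\mathbb Q))$, i.e. $h_0(f^*)\le\log\rho(H^*(f;\mathbb Q))$.

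The main obstacle is the upper bound: passing from control of the alternating sum $\chi$ (which HRR hands over for free) to control of the total dimension $\sum_n h^n$ requires a cohomology-vanishing or regularity argument. On a general smooth projective $X$ there need not be a tilting generator, and $(f^*)^NG$ need not be a sheaf, so one must be careful: the cleanest route is to fix $G=\bigoplus_i\mathcal O(D_i)$ coming from a resolution of the diagonal when $X$ is, say, a variety for which such resolutions exist, and in general to replace the naive count by the argument of Theorem~\ref{hom_entropy_bound} run in reverse — bounding $\delta_0(G,(f^*)^NG)$ by $\delta_0(k,R\mathrm{Hom}(G,(f^*)^NG))$ and then showing the latter's growth is governed by $H^*(f)$ using that, after the HKR identification, the class of $(f^*)^NG$ in the numerical Grothendieck group (equivalently its Chern character) drives the Poincaré polynomial once the amplitude is bounded. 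Making "the growth of the total dimension is controlled by the growth of the Chern character modulo a fixed bounded amplitude" precise — in effect a boundedness/semicontinuity statement uniform in $N$ — is the crux, and I expect the proof to spend most of its effort there.
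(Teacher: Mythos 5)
Your overall plan (constancy via Lemma~\ref{constEntropy}, lower bound via Theorem~\ref{hom_entropy_bound} plus HKR, upper bound via Riemann--Roch) coincides with the paper's, and the first two steps are essentially fine. But the upper bound --- which you yourself identify as the crux --- is left as a genuine gap: you never supply the vanishing statement that converts control of the Euler characteristic into control of the total dimension $\sum_n\dim\mathrm{Ext}^n$, and without it Hirzebruch--Riemann--Roch proves nothing, since cancellations in $\chi$ could mask exponential growth of the individual $h^n$. The paper closes exactly this gap with a short, specific construction: take $L$ very ample, set $G=\bigoplus_{k=1}^{d+1}L^k$ with $d=\dim X$, and measure the growth of $\mathrm{Ext}^*(G,(f^*)^NG^*)$ against the \emph{dual} generator $G^*$ rather than $G$ itself. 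Since $f$ is a morphism of projective varieties, $f^*$ preserves the nef cone, so $(f^*)^NG^*$ is a sum of anti-nef line bundles for every $N$; because $G^*$ is a sum of anti-ample line bundles and $\mathrm{nef}+\mathrm{ample}=\mathrm{ample}$, every summand of $G^*\otimes(f^*)^NG^*$ is anti-ample, and Kodaira vanishing (with Serre duality) concentrates $\mathrm{Ext}^*(G,(f^*)^NG^*)=H^*(X,G^*\otimes(f^*)^NG^*)$ in the single degree $d$, \emph{uniformly in $N$}. Hence the total dimension equals $|\chi|$, which HRR bounds by the $N$-th power of $\rho(H^*(f;\mathbb Q))$ up to constants. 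This is precisely the ``cohomology-vanishing argument uniform in $N$'' you said was missing; no tilting generator, resolution of the diagonal, or semicontinuity statement is needed.

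Two further points you would still have to address to complete your route. First, the above computes growth with two \emph{different} generators, $G$ as source and $G^*$ as target (both generate $D^b(X)$ by Orlov's theorem); the paper explicitly invokes a variant of Theorem~\ref{entropyPoincareSeries}, with the same proof, in which two different generators are allowed, and you would need that variant rather than the one-generator statement you quote. Second, your fallback suggestions --- assuming a tilting or Beilinson-type generator, or ``running Theorem~\ref{hom_entropy_bound} in reverse'' so that the Chern character ``drives'' the Poincar\'e polynomial --- do not obviously work on an arbitrary smooth projective $X$ and are not how the bound is obtained; the nef-preservation plus Kodaira vanishing trick is the missing idea.
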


\begin{proof}
Clearly, $f^*$ preserves $\mathrm{coh}(X)\subset D^b(X)$ which has finite
homological dimension, since $X$ is smooth and proper.
Hence, by Lemma~\ref{constEntropy}, $h_t(f^*)$ is constant in $t$.

We will show that
\begin{equation} \label{entropyRegUpper}
h_0(f^*)\leq\log\rho(H^*(f,\mathbb Q))
\end{equation}
without the assumption of Lemma \ref{str_radius}.
The claim then follows from Theorem~\ref{hom_entropy_bound} by the
Hochschild-Kostant-Rosenberg isomorphism and Hodge theory which give
\begin{equation}
HH_*(D^b(X))\cong H^*(X;\mathbb C)
\end{equation}
as $\mathbb Z/2$-graded vector spaces.

It remains to show \eqref{entropyRegUpper}.
Choose $L\in\mathrm{Pic}(X)$ very ample and consider
\begin{equation}
G=\bigoplus_{k=1}^{d+1}L^k
\end{equation}
where $d=\dim X$.
Both $G$ and $G^*$ are generators of $D^b(X)$ by \cite{orlovGenerators}.
Since $X$ is projective, $f$ preserves the nef cone in $N^1(X)_{\mathbb R}$.
In particular, $(f^*)^NG^*$ is a sum of anti-nef line bundles for every $N\ge
0$.
Note that $G^*$ itself is a sum of anti-ample line bundles, hence, using
$\mathrm{nef}+\mathrm{ample}=\mathrm{ample}$, we conclude that
\begin{equation}
G^*\otimes(f^*)^NG^*=\bigoplus_{1\leq k,l\leq d+1}L^{-k}\otimes (f^*)^NL^{-l}
\end{equation}
is a sum of anti-ample line bundles.
Thus, by the Kodaira vanishing theorem, $\mathrm{Ext}^*(G,(f^*)^NG^*)$ is
concentrated in degree $d$.
We get
\begin{align}
\dim\mathrm{Ext}^*(G,(f^*)^NG^*)&=
\left|\chi\mathrm{Ext}^*(G,(f^*)^NG^*)\right|\\
 &=\left|\int\mathrm{ch}(G^*)
 (f^*)^N\mathrm{ch}(G^*)\mathrm{td}(X)\right|
\end{align}
where $f^*$ is the induced map on $H^*(X;\mathbb Q)$.
It follows that
\begin{equation}
\lim_{N\to\infty}\sqrt[N]{\dim\mathrm{Ext}^*(G,(f^*)^NG^*)}\leq
\rho(H^*(X;\mathbb Q))
\end{equation}
which shows the claim, since the left hand side is $\exp(h_0(f^*))$ by
Theorem~\ref{entropyPoincareSeries}.
More precisely, we use here a variant of this theorem, with the same proof,
where one is allowed to take two different generators.
\end{proof}

\subsection{Entropy of the Serre functor}

The notion of a Serre functor was introduced by Bondal and Kapranov in
\cite{BK}. If it exists, it is unique up to natural isomorphism,
and thus its entropy an invariant of the triangulated category.
In the examples below, the entropy of the Serre functor will always turn out to
be of the linear form $at+b$, with $a$ having some interpretation as a
dimension. We note however, that this can fail in general, for example for
categories which are orthogonal sums.

\subsubsection{Fractional Calabi-Yau categories}

A triangulated category $\mc T$ with Serre functor $S$ is called
\emph{fractional Calabi-Yau} of dimension $\frac{m}{n}\in\mathbb Q$ if
\begin{equation}
S^n\cong [m]
\end{equation}
c.f. \cite{keller2008calabi}.
Suppose that $\mc T$ is the homotopy category of a saturated
$A_{\infty}$-category $\mc C$.
Using
\begin{equation}
h_t(F^n)=nh_t(F)\quad\text{ for } n\ge 1, \qquad\qquad h_t([n])=nt\quad \text{
for } n\in\mathbb Z
\end{equation}
we see that
\begin{equation}
h_t(S)=\frac{m}{n}t
\end{equation}
in this case.

\subsubsection{Smooth projective varieties}

Let $X$ be a smooth projective variety over a field $k$.
The bounded derived category of coherent sheaves on $X$,
$D^b(X)=D^b(\mathrm{coh}(X))$ is saturated with Serre functor
\begin{equation}
S=\_\otimes\omega_X[\dim X]
\end{equation}
where $\omega_X$ is the canonical sheaf.

\begin{prop}
$h_t(S)=\dim(X)t$
\end{prop}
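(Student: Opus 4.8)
The plan is to exploit that $D^b(X)$ is saturated (so Theorem~\ref{entropyPoincareSeries} applies) and that the Serre functor $S = \_\otimes\omega_X[\dim X]$ is particularly tractable, being a shift of a twist by a line bundle. Set $d = \dim X$. The key observation is that tensoring by a line bundle is an autoequivalence whose iterates $S^N(\_) = \_\otimes\omega_X^{\otimes N}[Nd]$ do not increase the homological spread of $\mathrm{Ext}$-groups: for a fixed generator $G$ one has $\mathrm{Ext}^n(G, S^N G) \cong \mathrm{Ext}^{n-Nd}(G, G\otimes\omega_X^{\otimes N})$, and the functor $R\Hom(G,\_\otimes\omega_X^{\otimes N})$ lands in a bounded range of degrees whose width is controlled by the homological dimension of $\mathrm{coh}(X)$, independently of $N$. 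Concretely, I would first invoke the remark after Lemma~\ref{constEntropy}: $S$ preserves the standard bounded t-structure on $D^b(X)$ (up to the shift $[d]$, which only translates degrees uniformly), and $\mathrm{coh}(X)$ has finite homological dimension since $X$ is smooth and proper. Hence $\mathrm{Ext}^n(G, S^N G)$ vanishes outside a window of the form $Nd - M \le n \le Nd + M$ for a constant $M$ depending only on $G$.

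Second, I would compute the growth rate via Theorem~\ref{entropyPoincareSeries}. Choosing $G$ to be, say, a tilting-type generator or simply $\bigoplus_{k=1}^{d+1} L^k$ for $L$ very ample (a generator by \cite{orlovGenerators}), we have
\[
\sum_{n\in\ZZ}\dim\mathrm{Ext}^n(G, S^N G)\,\re^{-nt}
= \re^{-Ndt}\sum_{m\in\ZZ}\dim\mathrm{Ext}^m(G, G\otimes\omega_X^{\otimes N})\,\re^{-mt}.
\]
So $h_t(S) = dt + h_0'$, where $h_0'$ is the exponential growth rate (in $N$) of $\sum_m \dim\mathrm{Ext}^m(G, G\otimes\omega_X^{\otimes N})$, a quantity that does not depend on $t$ because of the uniform bound $|m|\le M$ from the previous step (argue exactly as in the proof of Lemma~\ref{constEntropy}). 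It therefore remains to show $h_0' = 0$, i.e. that $\dim\mathrm{Ext}^*(G, G\otimes\omega_X^{\otimes N})$ grows subexponentially in $N$.

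Third, to see this subexponential growth, I would use Euler characteristics and Riemann--Roch, mimicking the end of the proof of Theorem~\ref{entropyRegular}. For $N$ large, $\omega_X^{\otimes N}$ tensored against the ample-type summands of $G$ and $G^*$ has vanishing higher (or lower) cohomology, so up to sign $\dim\mathrm{Ext}^*(G, G\otimes\omega_X^{\otimes N}) = |\chi(G^\vee\otimes G\otimes\omega_X^{\otimes N})| = \big|\int_X \mathrm{ch}(G^\vee)\,\mathrm{ch}(G)\,\mathrm{ch}(\omega_X)^{N}\,\mathrm{td}(X)\big|$, which is a polynomial in $N$ of degree at most $d$. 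A polynomial grows subexponentially, so $h_0' = 0$ and $h_t(S) = dt$. Alternatively, and more cleanly, one can apply Theorem~\ref{hom_entropy_bound} and the lower bound: the Serre functor acts on $HH_*(D^b(X)) \cong H^*(X;\CC)$ (via HKR and Hodge theory) through cup product with $\exp(c_1(\omega_X))$ composed with a degree shift, whose spectral radius is $1$, forcing $h_0' \ge 0$; combined with the polynomial upper bound this pins $h_0' = 0$.

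The main obstacle is the case analysis needed to pass from $\mathrm{Ext}$-dimensions to Euler characteristics: a priori only alternating sums are computed by Riemann--Roch, and one must arrange (via positivity of the chosen line bundles and a vanishing theorem such as Kodaira or Serre) that all relevant cohomology is concentrated in a single degree, so that $\dim = |\chi|$. For $\omega_X$ this is delicate when $X$ is not of general type — $\omega_X$ need not be ample — so the robust route is instead the $HH_*$ argument of the last paragraph, which sidesteps positivity entirely: one only needs that cup product with any fixed cohomology class is a nilpotent-plus-scalar operation of spectral radius $1$, together with the uniform-degree bound already established. I would present the proof along those lines, with the Riemann--Roch computation as a remark.
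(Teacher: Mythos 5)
Your reduction is fine and matches the paper: $h_t(S)=\dim(X)\,t+h_t(\_\otimes\omega_X)$, constancy in $t$ via Lemma~\ref{constEntropy}, so everything hinges on showing that $\sum_m\dim\mathrm{Ext}^m(G,G\otimes\omega_X^{\otimes N})$ grows subexponentially in $N$. The gap is in that last step. Your primary route (Riemann--Roch plus a vanishing theorem to get $\dim=|\chi|$) needs $\omega_X$ or its inverse to have positivity, which fails in general, as you yourself note (Calabi--Yau, $\chi$-trivial fibrations, etc.). But your declared fallback does not close the hole: Theorem~\ref{hom_entropy_bound} is an inequality in the wrong direction --- it says $\log\rho(HH_*(F))\le h_0(F)$, i.e.\ it bounds the entropy \emph{from below} by the spectral radius on Hochschild homology. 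Knowing that $\_\otimes\omega_X$ acts unipotently on $HH_*\cong H^*(X;\CC)$ with spectral radius $1$ therefore gives only $h_0(\_\otimes\omega_X)\ge 0$ (which is anyway trivial for an autoequivalence), and cannot replace the missing polynomial \emph{upper} bound. So as written the argument is circular: the upper bound is only proved under a positivity hypothesis, and the ``robust'' route silently reuses it. (A further, smaller issue: Theorem~\ref{hom_entropy_bound} requires the genericity hypothesis of Lemma~\ref{str_radius}, which fails for a unipotent action whenever the even and odd parts share the eigenvalue $1$ with equal multiplicity, e.g.\ when $\chi_{\mathrm{top}}(X)=0$.)

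The paper's Lemma~\ref{entropyForPic} avoids all of this: for an \emph{arbitrary} line bundle $L\in\mathrm{Pic}(X)$ and any coherent sheaf $\mathcal F$ one has the standard asymptotic $\dim H^n(X,\mathcal F\otimes L^N)=O(N^{\dim X})$ (see \cite{lazarsfeld2004positivity}; write $L$ as a difference of very ample bundles), which applied to $\mathcal F=\mathcal{H}om$-sheaves coming from a generator $G\in\mathrm{coh}(X)$ gives directly $\sum_n\dim\mathrm{Ext}^n(G,G\otimes\omega_X^{\otimes N})=O(N^d)$, hence $h_t(\_\otimes\omega_X)=0$ with no vanishing theorem, no Euler-characteristic trick, and no Hochschild input. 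Replacing your third step by this fact repairs the proof.
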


Note that in particular, if $\mc C$ is a saturated category and $h_t(S)$ is
not of the form $nt$ for some $n\in\mathbb Z_{\ge 0}$, then $\mc C$ cannot
come from an algebraic variety.

\begin{proof}
This is equivalent to $h_t(\_\otimes\omega_X)=0$, which is a special case of
Lemma \ref{entropyForPic} below.
\end{proof}

\begin{lemma} \label{entropyForPic}
Let $X$ be a smooth projective variety over a field $k$, $L\in\mathrm{Pic}(X)$.
Then $h_t(\_\otimes L)=0$.
\end{lemma}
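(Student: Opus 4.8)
The statement to prove is $h_t(\_\otimes L)=0$ for $L\in\mathrm{Pic}(X)$ with $X$ smooth projective. The natural strategy is to produce a single generator $G$ of $D^b(X)$ such that the complexity $\delta_t(G, F^N G)$ — where $F=\_\otimes L$ — grows subexponentially in $N$, in fact is bounded by a polynomial in $N$ (or even a constant), uniformly in $t$ on compact sets; then the $\tfrac1N\log$ limit defining $h_t(F)$ is $0$. Concretely, $F^N G = G\otimes L^{N}$, so we need a generator whose twists by arbitrary powers of $L$ stay ``uniformly bounded'' relative to $G$.

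\textbf{Key steps.} First I would fix an ample line bundle $H$ on $X$ and set $d=\dim X$. The idea is to use a generator of the form $G=\bigoplus_{k\in I} H^{k}$ for a suitable finite interval $I$ of integers (à la Orlov, as already invoked in the proof of Theorem~\ref{entropyRegular}), large enough that every line bundle $M$ with $-N_0\le (\text{appropriate intersection numbers}) \le N_0$ — in particular finitely many twists — is built from $G$ with controlled complexity. The point is that $L$ is \emph{one fixed} line bundle, so after tensoring $G$ by $L^N$ we have moved only ``linearly far'' in the Picard group: there is a constant $C$ (independent of $N$, depending on $L$ and $G$) such that $L^{N}\otimes H^{k}$ can be resolved by a bounded complex of sums of the $H^{j}$'s using at most $C\cdot N$ (or even $C$, if one is slightly cleverer) shifted summands, the shifts lying in a bounded range $[-d,d]$. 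This uses the standard fact that for a very ample $H$, any line bundle $M$ sits in a Koszul-type resolution by sums of $H^{-j}$, $0\le j\le d$, tensored back up — so $\delta_t(G, M\otimes G)$ is controlled by the Poincaré polynomial of such a resolution, which one can take of length $\le d$ with summand-count bounded in terms of the intersection numbers of $M$ with $H$, hence polynomially in $N$ when $M=L^N$.

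Then, having $\delta_t(G, F^N G)\le p(N)\,q(e^{-t})$ for fixed polynomials $p,q$ (with $q$ having nonnegative coefficients), I would take $\tfrac1N\log$ and let $N\to\infty$: the right-hand side contributes $\tfrac1N\log p(N)\to 0$ and $\tfrac1N\log q(e^{-t})\to 0$ for every fixed $t$, so $h_t(F)\le 0$. For the reverse inequality $h_t(F)\ge 0$: since $F=\_\otimes L$ is an autoequivalence, $\delta_t(G,F^NG)=\delta_t(F^{-N}G, G)\ge$ a positive constant (a complexity of a nonzero object relative to a generator is $\ge$ some positive quantity, e.g. using Lemma~\ref{complexity in Dbvec} applied to $R\Hom(G,\_)$ as in the proof of Theorem~\ref{entropyPoincareSeries}, or simply noting $\delta_t\ge$ the $1$-norm of a nonzero Poincaré polynomial), so $\tfrac1N\log\delta_t\ge \tfrac1N\cdot(\text{const})\to 0$. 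Combining, $h_t(F)=0$. An alternative, possibly cleaner, route avoiding the polynomial bookkeeping: work in the saturated $A_\infty$-setting and apply Theorem~\ref{entropyPoincareSeries}, reducing to showing $\tfrac1N\log\sum_n\dim\mathrm{Ext}^n(G,G\otimes L^N)e^{-nt}\to 0$; then bound these Ext dimensions by (a polynomial in $N$ times a constant) via Serre vanishing / Castelnuovo–Mumford regularity applied to the fixed sheaf $\mathcal{H}om(G,G)$ twisted by $L^N$ after first twisting by a large power of $H$ to kill higher cohomology — the Euler-characteristic estimate $\chi(G, G\otimes L^N)$ is a polynomial in $N$ by Hirzebruch–Riemann–Roch, and regularity controls each cohomology group by such polynomials.

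\textbf{Main obstacle.} The delicate point is the \emph{uniformity in $N$} of the complexity bound: one must resolve $L^N\otimes(\text{summand of }G)$ by sums of shifts of summands of $G$ with both the number of summands and the range of shifts controlled — the range of shifts is easy (bounded by $d$ via a Koszul resolution), but getting the summand-count to grow only polynomially (not exponentially) in $N$ requires care, either through explicit regularity estimates for $L^N$ with respect to $H$ (Castelnuovo–Mumford regularity grows linearly in $N$ since $L$ is fixed), or through the HRR polynomial bound on Euler characteristics combined with a vanishing theorem to pass from $\chi$ to $\sum\dim$. Once this uniform subexponential bound is in hand, everything else is the short $\tfrac1N\log\to 0$ argument above, plus the easy lower bound from $F$ being invertible.
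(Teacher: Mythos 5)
Your ``alternative, cleaner route'' is in fact the paper's own proof: the paper first invokes Lemma~\ref{constEntropy} (applicable because $\_\otimes L$ preserves $\mathrm{coh}(X)$, which has finite homological dimension, so a sheaf generator has $\mathrm{Ext}$'s in a bounded range of degrees) to conclude that $h_t(\_\otimes L)$ is constant in $t$, and then, via Theorem~\ref{entropyPoincareSeries}, reduces everything to the bound $\sum_n\dim\mathrm{Ext}^n(G,G\otimes L^N)=O(N^{d})$, which it obtains from the standard fact that $\dim H^n(X,\mathcal F\otimes L^N)=O(N^{\dim X})$ for every coherent sheaf $\mathcal F$ and \emph{every} line bundle $L$ (cited from \cite{lazarsfeld2004positivity}). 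Your handling of the $t$-dependence through the bounded range of shifts/degrees is exactly the content of Lemma~\ref{constEntropy}, and the lower bound $h_t\ge 0$ is immediate from the $\mathrm{Ext}$-formula (the weighted sum is bounded below by $\min(1,e^{-dt})$ since the $\mathrm{Ext}$'s are nonzero and concentrated in degrees $0,\dots,\dim X$), so no appeal to invertibility of $\_\otimes L$ is really needed.

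Two cautions. First, your primary route --- resolving $L^N\otimes H^k$ by sums of powers of $H$ ``of length $\le d$'' with polynomially many summands --- is not established as stated: on a general $X$ (unlike on $\mathbb P^n$) a sheaf has no finite resolution by sums of powers of $H$, so one cannot truncate a Koszul-type resolution at length $d$ and remain inside sums of shifted copies of $G$; this is precisely the bookkeeping you flag as the main obstacle, and the inequality $\delta_t(G,E)\le C(t)\sum_n\dim\mathrm{Ext}^n(G,E)e^{-nt}$ established in the proof of Theorem~\ref{entropyPoincareSeries} exists exactly to make that bookkeeping unnecessary. Second, your proposed mechanism for the key estimate --- Hirzebruch--Riemann--Roch for $\chi$ together with Serre vanishing or a vanishing theorem to pass from $\chi$ to $\sum\dim$ --- does not apply to an arbitrary $L\in\mathrm{Pic}(X)$, which need not be ample or even nef (take $L$ anti-ample: the higher cohomology of $\mathcal F\otimes L^N$ is what grows), so no such vanishing is available; the statement actually needed is the unconditional growth bound $h^i(X,\mathcal F\otimes L^N)=O(N^{\dim X})$ for arbitrary $L$, proved by writing $L$ as a difference of very ample bundles and inducting on dimension, and this is what the paper quotes. (Also, if you phrase the bound through $\mathcal{H}om(G,G)\otimes L^N$, either take $G$ locally free, e.g. $G=\bigoplus H^k$, or use the local-to-global spectral sequence with the finitely many sheaves $\mathcal{E}xt^q(G,G)$.) With these substitutions your argument coincides with the paper's.
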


\begin{proof}
By lemma \ref{constEntropy}, $h_t(\_\otimes L)$ is constant.
Let $G\in\mathrm{coh}(X)$ be a generator of $D^b(X)$.
It suffices to show that
\begin{equation}
\sum_{n\in\mathbb Z}\dim \mathrm{Ext}^n(G,G\otimes L^N) = O(N^d),\quad
N\to\infty
\end{equation}
for some $d$.
This holds by the standard fact that
\begin{equation}
\dim H^n(\mathcal{O}_X, \mathcal F\otimes L^N) = O(N^{\dim X}),\quad N\to\infty
\end{equation}
for any coherent sheaf $\mathcal F$, $n\ge 0$, see
\cite{lazarsfeld2004positivity}.
\end{proof}

\subsubsection{Quivers}

Let $Q$ be a quiver. We assume throughout that $Q$ is connected and does not
have oriented cycles.
Fix a field $k$ and form the path algebra $A=kQ$ of $Q$.
Let $\mathcal A$ be the category of finite dimensional left modules over $A$.
Its bounded derived category, $D^b\mathcal A$, is saturated with
Serre functor isomorphic to the derived functor of $\mathrm{Hom}(\_,A)^*$.

\begin{lemma}
If $X\in\mathcal A$ is projective, then $S(X)\in\mathcal A$ and $S(X)$ is
injective. If X has no projective summands, then $S(X)\in\mathcal A[1]$.
\end{lemma}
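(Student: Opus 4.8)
The plan is to use the structure theory of the Auslander--Reiten translate, which for a hereditary algebra $A = kQ$ is closely related to the Serre functor. Recall that for $D^b\mathcal{A}$ the Serre functor is $S = \_\overset{L}{\otimes}_A A^* = \nu$, the Nakayama functor, where $A^* = \mathrm{Hom}_k(A,k)$ is the $A$-bimodule dual. The Nakayama functor sends projectives to injectives: concretely $\nu(P_i) = I_i$, where $P_i$ is the indecomposable projective at vertex $i$ and $I_i$ the indecomposable injective. So for the first assertion, if $X \in \mathcal{A}$ is projective, I would write $X = \bigoplus_j P_{i_j}^{\oplus m_j}$ and compute $S(X) = \nu(X) = \bigoplus_j I_{i_j}^{\oplus m_j}$ directly; since $\mathrm{Tor}^A_{>0}(X, A^*) = 0$ for $X$ projective, the derived functor reduces to the underived one, so $S(X) \in \mathcal{A}$ and it is injective.

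For the second assertion I would invoke the standard fact (Auslander--Reiten--Smalø, or Happel's book on derived categories of hereditary algebras) that for a hereditary algebra the Serre functor, restricted to indecomposables, acts as follows: on indecomposable projectives it gives indecomposable injectives (placed in degree $0$), and on every indecomposable non-projective module $M$ it gives $\tau M[1]$, where $\tau$ is the AR-translate, which is again an honest module. The key input is that since $A$ is hereditary, for a module $M$ with no projective summand one has $\nu(M)$ concentrated in degree $1$: taking a minimal projective resolution $0 \to P_1 \to P_0 \to M \to 0$ (length $\le 1$ by hereditariness), applying $\nu$ gives a complex $\nu P_1 \to \nu P_0$ in degrees $-1, 0$; the fact that $M$ has no projective summand forces $\nu$ of the resolution map to be injective, so the homology is concentrated in degree $1$, i.e. $S(M) = H^1(\nu M^{\bullet})[-1] \in \mathcal{A}[1]$. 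For a general $X$ with no projective summands, decompose into indecomposables and apply this to each.

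The main obstacle — really the only subtle point — is verifying that $\nu$ applied to the minimal projective resolution of a module $M$ with no projective summands yields an injective map $\nu P_1 \hookrightarrow \nu P_0$, equivalently that $H^0(\nu M^{\bullet}) = 0$, equivalently (by duality) that $M$ has no projective summand. I would handle this via the characterization $H^0(\nu M^\bullet) = \nu M / (\text{image})$, and the fact that for hereditary $A$ the functor $\mathrm{Hom}_A(\_, A)$ applied to the resolution detects exactly the projective part of $M$: $\mathrm{Hom}_A(M, A)$ is the maximal ``reflexive part,'' and a minimality argument on the projective presentation shows the relevant map is split-mono precisely on the projective summands of $M$. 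Alternatively, one can argue more cheaply: if some indecomposable summand $M_0$ of $M$ had $S(M_0)$ not concentrated in degree $1$, then since $S(M_0)$ is indecomposable in $D^b\mathcal{A}$ and $D^b\mathcal{A}$ has all indecomposables concentrated in a single degree (hereditary case, by Happel), $S(M_0)$ would lie in degree $0$; but $S$ is an autoequivalence and $S^{-1}$ of a module is a module only for injectives (dually to the projective statement), forcing $M_0$ injective, hence — being both in the image of $S$ from a module and... — one derives that $M_0$ must have been projective, a contradiction. I would pick whichever of these two routes is cleanest given the conventions already fixed in the paper, most likely citing Happel or Assem--Simson--Skowroński for the AR-translate description and keeping the argument to a few lines.
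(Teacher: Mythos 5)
Your route --- identifying $S$ with the derived Nakayama functor $\nu=\_\otimes^L_A A^*$ and computing on a minimal projective resolution --- is genuinely different from the paper's proof, which never touches $\nu$ or $\tau$: the paper notes that hereditariness puts every indecomposable of $D^b\mathcal A$ into a single shift $\mathcal A[n]$ and that $S(M)\in\mathcal A$ or $\mathcal A[1]$ for indecomposable $M\in\mathcal A$, and then reads both claims off Serre duality, namely $\mathrm{Ext}^n(P,M)\cong\mathrm{Ext}^{-n}(M,S(P))^*=0$ for all $M$ and $n\neq 0$ when $P$ is projective, and $\mathrm{Ext}^1(N,M)\cong\mathrm{Ext}^{-1}(M,S(N))^*\neq 0$ for some $M$ when $N$ is indecomposable non-projective. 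Your first paragraph (projectives go to injectives, no higher Tor) is fine and matches the first claim.

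The key step of your second paragraph, however, is stated backwards. For a minimal resolution $0\to P_1\to P_0\to M\to 0$, the complex $\nu P_1\to\nu P_0$ sitting in degrees $-1,0$ has $H^{-1}=\ker(\nu P_1\to\nu P_0)=\tau M$ and $H^0=\mathrm{coker}=\nu M$. For $M$ indecomposable non-projective this map is \emph{not} injective (its kernel is $\tau M\neq 0$), and if it were injective you would get $S(M)\simeq\mathrm{coker}$ concentrated in degree $0$, i.e.\ $S(M)\in\mathcal A$ --- the opposite of the assertion. What you must prove is surjectivity, i.e.\ $H^0=\nu M\cong\mathrm{Hom}_A(M,A)^*=0$; your parenthetical ``equivalently $H^0=0$'' is the correct condition but it is not equivalent to injectivity, and the degree bookkeeping ($H^1$ of a complex in degrees $-1,0$, and $[-1]$ versus $[1]$) is inconsistent. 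The missing argument is short and should be supplied rather than gestured at via ``reflexive parts'': over a hereditary algebra the image of any nonzero $f\colon M\to A$ is a submodule of a projective, hence projective, so $M\twoheadrightarrow\mathrm{im}\,f$ splits and $M$ acquires a projective summand; thus no projective summands forces $\mathrm{Hom}_A(M,A)=0$, hence $\nu M=0$ and $S(M)\cong\tau M[1]\in\mathcal A[1]$. Finally, your ``cheaper'' alternative is not available as stated: it invokes the dual of the very statement being proved (``$S^{-1}$ of a module is a module only for injectives'') and trails off; once unwound via Serre duality ($S(M_0)\in\mathcal A$ gives $\mathrm{Ext}^1(M_0,N)\cong\mathrm{Ext}^{-1}(N,S(M_0))^*=0$ for all $N$, so $M_0$ is projective) it simply reproduces the paper's argument.
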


\begin{proof}
Note first that $\mathcal A$ is hereditary and thus any indecomposable object in
$D^b\mathcal A$ is contained in $\mathcal A[n]$ for some $n\in\mathbb
Z$.
Furthermore, if $M\in\mathcal A$ is indecomposable, then $S(M)$ is contained
either in $\mathcal A$ or in $\mathcal A[1]$.
Suppose $P$ is projective, then
\begin{equation}
\mathrm{Ext}^n(P,M)=\mathrm{Ext}^{-n}(M,S(P))=0\qquad\text{ for }n\neq 0,
M\in\mathcal A
\end{equation}
hence $S(P)\in\mathcal A$ and $S(P)$ is injective.
If $N\in\mathcal A$ is indecomposable and not projective, then there exists an
$M\in\mathcal A$ with
\begin{equation}
\mathrm{Ext}^1(N,M)=\mathrm{Ext}^{-1}(M,S(N))\neq 0
\end{equation}
thus $S(N)\notin\mathcal A$.
\end{proof}

Let $e_i$ be the idempotent corresponding to the vertex $i\in Q_0$.
Recall that $Ae_i$, $i\in Q_0$ is a complete list of indecomposable projectives
and $(e_iA)^*$, $i\in Q_0$ is a complete list of indecomposable injectives.

\begin{lemma}
If $Q$ is not Dynkin, then $\Phi^{-N}(P)\in\mathcal A$ for any projective
module $P$ and $N\ge 0$.
\end{lemma}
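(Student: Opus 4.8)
The plan is to exploit the interplay between the Auslander--Reiten translation (which on $D^b\mathcal A$ is realized by the functor $\Phi = S[-1]$, the twist by the Coxeter element) and the combinatorics of the Euler form of a non-Dynkin quiver. The statement to prove is that, when $Q$ is not Dynkin, $\Phi^{-N}(P)\in\mathcal A$ for every projective module $P$ and every $N\geq 0$. I would proceed by induction on $N$, the base case $N=0$ being trivial since $P$ is already a module. For the inductive step it suffices to treat the case $P=Ae_i$ indecomposable projective, and to show that if $\Phi^{-N}(Ae_i)$ lies in $\mathcal A$ then it has no injective summands, so that applying $\Phi^{-1}$ once more keeps us inside $\mathcal A$ (by the previous lemma and the standard description of $\Phi^{-1}$ on non-injective indecomposables). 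Equivalently, I would show that $\Phi^{-N}(Ae_i)$ can never acquire an indecomposable injective summand $(e_jA)^*$.

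The key step is a dimension-vector / homological-form argument. Write $\langle\,\cdot\,,\,\cdot\,\rangle$ for the Euler form on $K_0(\mathcal A)=\mathbb Z^{Q_0}$, so that $\langle[M],[N]\rangle = \dim\mathrm{Hom}(M,N)-\dim\mathrm{Ext}^1(M,N)$ since $\mathcal A$ is hereditary; the Coxeter transformation $c$ satisfies $[\Phi(M)] = c[M]$ and $\langle cx, cy\rangle = \langle x,y\rangle$, with $\langle x,y\rangle = -\langle y, cx\rangle$. If some $\Phi^{-N}(Ae_i)$ had $(e_jA)^*$ as a summand, then already $\Phi^{-(N-1)}(Ae_i)$ would be (up to summand) $\Phi^{-1}$ of an injective, which is not a module unless that injective is also projective — but for $Q$ not Dynkin no indecomposable is simultaneously projective and injective along an orbit reaching a genuine projective, and more robustly: the non-Dynkin hypothesis forces the Tits form $q(x) = \langle x, x\rangle$ to be non-positive-definite, hence the spectral radius of $c$ is $\geq 1$ and the negative powers of $c$ do not cycle through the positive orthant back onto the (finite) set of injective dimension vectors. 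Concretely, I would invoke the classical fact (Dlab--Ringel, Gabriel) that for non-Dynkin $Q$ the Auslander--Reiten quiver of the preprojective component is infinite and the $\Phi^{-1}$-orbit of every projective is a full infinite ray of indecomposable modules $P_i=\Phi^0(Ae_i),\ \Phi^{-1}(Ae_i),\ \Phi^{-2}(Ae_i),\dots$, all of which are preprojective modules and in particular lie in $\mathcal A$; the injectives, being preinjective, never appear in such a ray when $Q$ is not Dynkin (in the Dynkin case the preprojective and preinjective components coincide and the ray wraps around, which is exactly the phenomenon we are excluding).

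The main obstacle, and the part requiring care, is making precise the dichotomy "Dynkin $\Rightarrow$ orbit of projective meets the injectives / non-Dynkin $\Rightarrow$ it does not." I expect to handle this by the positivity argument: for non-Dynkin $Q$ one has a (possibly zero or imaginary) radical vector and the property that $c^{-N}[Ae_i]$ stays a genuine (coordinate-wise nonnegative, nonzero) dimension vector for all $N\geq 0$ — this is the standard preprojective-root computation, using that $\dim\mathrm{Hom}(\Phi^{-N}Ae_i, \Phi^{-(N+1)}Ae_i)>0$ and $\mathrm{Ext}^1$ between consecutive terms controls the shift. Once one knows $c^{-N}[Ae_i]$ is a nonnegative nonzero integer vector and that the corresponding indecomposable is rigid (preprojective modules are rigid over a hereditary algebra), Gabriel-type reconstruction places that indecomposable in $\mathcal A$, not in $\mathcal A[1]$, completing the induction. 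I would lean on \cite{BK} for the Serre-functor formalism and cite the classical representation theory of hereditary algebras for the root-theoretic input rather than reproving it.
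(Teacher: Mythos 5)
Your proposal is correct and takes essentially the same route as the paper: iterate $\Phi^{-1}$ using the previous lemma, and rule out an injective ever appearing in the orbit because an indecomposable that is both preprojective and preinjective forces finite representation type (the paper cites Auslander--Reiten--Smal\o), contradicting $Q$ not being Dynkin. Note only that the load-bearing input is exactly that classical dichotomy which you cite, not the Tits-form/spectral-radius heuristic, which as stated does not distinguish Dynkin from Euclidean quivers (the Coxeter transformation has spectral radius $1$ in both cases).
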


\begin{proof}
By the previous lemma, $\Phi^{-1}(M)\in\mathcal A$ for an
indecomposable $M\in\mathcal A$ as long as $M$ is not injective.
Suppose that $P$ is projective and $M=\Phi^{-N}(P)$ is injective.
Then $M$ is both preprojective and preinjective, thus $A$ of finite
representation type by a result of \cite{auslander1997representation} ,
contradicting the assumption that $Q$ is not Dynkin.
\end{proof}

If $Q$ is Dynkin, then $D^b\mathcal A$ is fractional Calabi-Yau, see
\cite{keller2008calabi}. We assume from now on that $Q$ is not Dynkin.
Then, by the lemma
\begin{equation}
\mathrm{Ext}^n(A,\Phi^{-N}(A))=0 \qquad \text{ for } N\ge 0, n\neq 0
\end{equation}
and using $\dim\mathrm{Hom}(A,M)=\dim M$ we get
\begin{equation}
h_t(\Phi^{-1})=\lim_{N\to\infty}\frac{1}{N}\log\dim\Phi^{-N}A.
\end{equation}
Furthermore, by duality,
\begin{equation}
\dim\mathrm{Ext}^n(A,\Phi^NA)=\dim\mathrm{Ext}^{1-n}(A,(\Phi^{-1})^{N-1}A)
\end{equation}
hence
\begin{equation}
h_t(S)=h_t(\Phi)+t=h_t(\Phi^{-1})+t.
\end{equation}

At this point, the problem of computing $h_t(\Phi^{-1})$ is one of linear
algebra, since $\dim M$ depends only on
\begin{equation}
M\in K_0(\mathcal A)\cong\mathbb Z^{Q_0}
\end{equation}
where a natural basis of $K_0(\mathcal A)$ is given by the simple modules.
Recall that the Euler form
\begin{equation}
\langle M,N\rangle =\sum_{n\in\mathbb Z}(-1)^n\dim\mathrm{Ext}^n(M,N)
\end{equation}
is a bilinear form on $K_0(\mathcal A)$ with matrix $E$ given by
\begin{equation}
E_{ij}=\delta_{ij}-n_{ij}
\end{equation}
where $n_{ij}$ is the number of arrows from the $j$-th to the $i$-th vertex.
Let $[S]$ denote the induced map on $K_0(\mathcal A)$.
By the defining property of the Serre functor, $E^{\top}=E[S]$, hence
\begin{equation}
[S]=E^{-1}E^{\top},\qquad [\Phi]=-E^{-1}E^{\top},\qquad
[\Phi^{-1}]=-E^{-\top}E.
\end{equation}
The linear map $[\Phi]$ is classically the Coxeter transformations for the
Cartan matrix $E+E^{\top}$.
Its spectral properties were investigated by Dlab and Ringel in
\cite{dlab1981eigenvalues}.
They find that $\rho=\rho([\Phi])=\rho([\Phi^{-1}])$ is an eigenvalue of
$[\Phi]$, $\rho\ge 1$, and $\rho=1$ if and only if $Q$ is of extended Dynkin
type.
Furthermore, if $P$ is projective, then
\begin{equation}
\lim_{N\to\infty}\left(\dim\Phi^{-N}P\right)^{\frac{1}{N}}=\rho.
\end{equation}
We summarize our results.

\begin{theorem}
Let $Q$ be a connected quiver without oriented cycles, not of Dynkin type, and
$S$ the Serre functor on $D^b(kQ)$. Then
\begin{equation}
h_t(S)=t+\log{\rho([S])}
\end{equation}
and $\rho([S])\ge 1$ with equality if and only if $Q$ is of extended Dynkin
type.
\end{theorem}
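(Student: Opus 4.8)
The plan is to assemble the facts established in this subsection, so that very little new work is needed. Since $S$ is an autoequivalence, $h_t(S)$ is defined, and we have already derived the identity $h_t(S)=h_t(\Phi^{-1})+t$; hence it suffices to prove that $h_t(\Phi^{-1})=\log\rho([S])$, independently of $t$. First I would recall the lemma above, valid because $Q$ is not Dynkin, that $\Phi^{-N}(P)\in\mathcal A$ for every projective $P$ and every $N\ge 0$. Applied to $P=A$ this gives $\mathrm{Ext}^n(A,\Phi^{-N}A)=0$ for $n\ne 0$ and $\dim\mathrm{Hom}(A,\Phi^{-N}A)=\dim\Phi^{-N}A$, so Theorem~\ref{entropyPoincareSeries} (with $G=A$) yields
\begin{equation}
h_t(\Phi^{-1})=\lim_{N\to\infty}\frac1N\log\dim\Phi^{-N}A,
\end{equation}
a quantity manifestly independent of $t$.

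Next I would pass from $A$ to its indecomposable projective summands. Write $A=\bigoplus_{i\in Q_0}Ae_i$, so $\dim\Phi^{-N}A=\sum_{i\in Q_0}\dim\Phi^{-N}(Ae_i)$. Each summand $\Phi^{-N}(Ae_i)$ is a nonzero object of $\mathcal A$ — nonzero because $\Phi^{-1}$ is an autoequivalence and $Ae_i\ne 0$, in $\mathcal A$ by the cited lemma — hence a finite-dimensional module of dimension at least one. The Dlab--Ringel analysis of the Coxeter transformation in \cite{dlab1981eigenvalues} gives $\lim_{N\to\infty}(\dim\Phi^{-N}(Ae_i))^{1/N}=\rho$ for each $i$, where $\rho=\rho([\Phi])=\rho([\Phi^{-1}])$. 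From the sandwich $\max_i\dim\Phi^{-N}(Ae_i)\le\dim\Phi^{-N}A\le |Q_0|\cdot\max_i\dim\Phi^{-N}(Ae_i)$ and $|Q_0|^{1/N}\to 1$ one gets $\lim_{N\to\infty}(\dim\Phi^{-N}A)^{1/N}=\rho$, i.e. $h_t(\Phi^{-1})=\log\rho$.

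Finally, $[\Phi]=-[S]=-E^{-1}E^{\top}$ forces $\rho([S])=\rho([\Phi])=\rho$, whence $h_t(S)=t+\log\rho([S])$. The remaining assertions — $\rho([S])\ge 1$, with equality precisely when $Q$ is of extended Dynkin type — are exactly the conclusions of the Dlab--Ringel theorem quoted above. There is no genuine obstacle here; the only points requiring a word of care are the squeeze from the exponential growth rate of each indecomposable projective summand to that of their sum, and the fact (again from \cite{dlab1981eigenvalues}) that $[\Phi]$ and $[\Phi^{-1}]$ share the same spectral radius, both of which are routine.
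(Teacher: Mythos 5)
Your proposal is correct and follows essentially the same route as the paper: reduce to $h_t(\Phi^{-1})$ via the identity $h_t(S)=h_t(\Phi^{-1})+t$, use the vanishing of higher Ext's coming from $\Phi^{-N}(A)\in\mathcal A$ together with Theorem~\ref{entropyPoincareSeries} to express the entropy as $\lim_N\frac1N\log\dim\Phi^{-N}A$, and invoke Dlab--Ringel for the growth rate and the characterization $\rho=1\iff Q$ extended Dynkin. The only cosmetic difference is that you split $A$ into its indecomposable projective summands and squeeze, whereas the paper applies the Dlab--Ringel limit $\lim_N(\dim\Phi^{-N}P)^{1/N}=\rho$ directly to the projective module $P=A$.
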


\subsection{Pseudo-Anosov maps}

A great insight of Thurston was that a ``typical'' element of the mapping
class group of a surface $M$ can be represented by a \emph{pseudo-Anosov map}
\cite{thurston, flp}, which is by definition a homeomorphisms $\phi:M\to M$ such
that there exist transverse measured foliations $(\mc F^s,\mu^s)$, $(\mc F^u,\mu^u)$ and
$\lambda>1$, such that
\be \phi(\mc F^s,\mu^s)=(\mc F^s,\frac{1}{\lambda}\mu^s),\quad \phi(\mc
F^u,\mu^u)=(\mc F^u,\lambda\mu^u).\ee
The \emph{stretch factor} $\lambda$ can be computed as follows.
Let $\mc S$ be the set of isotopy classes of simple closed curves on $M$ which
do not contract to a point.
For $\alpha,\beta\in\mc S$ the \emph{geometric intersection number}
$i(\alpha,\beta)$ is the minimum number of intersection points of simple closed
curves representing $\alpha$ and $\beta$.
Then for $\phi,\lambda$ as above and $\alpha,\beta\in\mc S$ one has
\be \label{intersectionGrowth}
\lim_{n\to\infty}\sqrt[n]{i(\alpha,\phi^n(\beta))}=\lambda. \ee

We restrict to the case when $M$ is closed, orientable, and of genus $g>1$.
The Fukaya category $\mathrm{Fuk}(M)$ is a $\mathbb Z/2$-graded
$A_\infty$-category over the Novikov field $\Lambda_{\mathbb R}$.
For our purposes, objects are embedded curves with orientation and the bounding
spin structure.
Let $D^{\pi}\mathrm{Fuk}(M)$ denote the triangulated hull, which
is locally proper (in the $\mathbb Z/2$-graded sense), and has a generator given
by a collection of loops \cite{seidel2011,efimov2012}.
In fact, an explicit resolution of the diagonal is constructed in
\cite{seidel2011,efimov2012}, proving homological smoothness.
Thus $D^{\pi}\mathrm{Fuk}(M)$ is saturated.
An element $\phi$ of the mapping class group of $M$ induces an autoequivalence
$\phi_*$ of $D^{\pi}\mathrm{Fuk}(M)$.

\begin{theorem} \label{pseudoAnosovEntropy}
Let $\phi$ be a pseudo-Anosov map with stretch factor $\lambda$, then
$h_0(\phi_*)=\log\lambda$.
\end{theorem}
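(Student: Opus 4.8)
The plan is to compare the categorical entropy $h_0(\phi_*)$ with the geometric intersection-number growth in \eqref{intersectionGrowth}, using the saturated machinery of Subsection~\ref{subsecSaturated}. By Theorem~\ref{entropyPoincareSeries}, for any generator $G$ of $D^{\pi}\mathrm{Fuk}(M)$ we have
\begin{equation*}
\re^{h_0(\phi_*)}=\lim_{N\to\infty}\left(\dim\mathrm{Ext}^*(G,\phi_*^N G)\right)^{1/N},
\end{equation*}
where $\dim\mathrm{Ext}^*$ is the total dimension over the $\mathbb Z/2$-grading (here $t=0$ is forced since $\mathrm{Fuk}(M)$ is $\mathbb Z/2$-graded). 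So it suffices to choose a convenient generator and estimate this Floer-cohomological growth from above and below by $\lambda^N$. The natural choice is a generating collection of embedded simple closed curves $L_1,\dots,L_r$ as in \cite{seidel2011,efimov2012}, so that $\phi_*^N G$ is represented by the curves $\phi^N(L_1),\dots,\phi^N(L_r)$; then $\dim\mathrm{Ext}^*(G,\phi_*^N G)=\sum_{i,j}\dim HF^*(L_i,\phi^N(L_j))$.

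The key geometric input is that for two non-contractible simple closed curves $\alpha,\beta$ in minimal position on a hyperbolic surface, $\dim HF^*(\alpha,\beta)=i(\alpha,\beta)$: the Floer complex $CF^*(\alpha,\beta)$ has one generator per intersection point, and because minimal position on a surface of negative curvature admits no immersed bigons, the Floer differential vanishes, so $HF^*$ has dimension exactly $i(\alpha,\beta)$. (One must take a little care with the case $i(\alpha,\beta)=0$, i.e. disjoint or isotopic curves, and with the Hamiltonian perturbation needed to compute $HF^*(\alpha,\alpha)$; these contribute only bounded terms and do not affect the exponential rate.) Granting this,
\begin{equation*}
\dim\mathrm{Ext}^*(G,\phi_*^N G)=\sum_{i,j} i(L_i,\phi^N(L_j))+O(1),
\end{equation*}
and taking $N$-th roots and using \eqref{intersectionGrowth} for each pair $(L_i,L_j)$ — at least one pair must have $i(L_i,L_j)>0$ since the $L_j$ fill $M$ — gives $\re^{h_0(\phi_*)}=\lambda$, hence $h_0(\phi_*)=\log\lambda$.

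The main obstacle is the identification $\dim HF^*=i$, i.e. the vanishing of the Floer differential. This is folklore but requires the right model: one should put the $L_i$ and their images in pairwise minimal position simultaneously (possible since each $\phi^N(L_j)$ is again an embedded curve and finitely many curves on a surface can be simultaneously minimized), invoke the fact that minimal position of curves on a hyperbolic surface means no complementary bigons, and quote that under these hypotheses the Lagrangian Floer differential — which counts immersed holomorphic bigons — is zero, independent of the choice of compatible almost complex structure (equivalently, realize the curves as geodesics for a hyperbolic metric and use the classical no-bigon criterion). A secondary technical point is checking that the triangulated hull $\phi_*^N G$ is genuinely represented by the curves $\phi^N(L_i)$ rather than some twisted complex — this is immediate since $\phi$ is a symplectomorphism, so $\phi_*$ sends the object $L_i$ to the object $\phi(L_i)$ on the nose, and then the upper and lower exponential bounds follow purely from \eqref{intersectionGrowth} and the finiteness of the generating collection.
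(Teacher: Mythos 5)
Your overall architecture coincides with the paper's: apply the $\mathbb Z/2$-graded version of Theorem~\ref{entropyPoincareSeries} to the loop generator of $D^{\pi}\mathrm{Fuk}(M)$, identify $\dim\mathrm{Ext}^0+\dim\mathrm{Ext}^1$ with geometric intersection numbers, and conclude via \eqref{intersectionGrowth}. The gap is in your justification of the key identity $\dim HF^0(\alpha,\beta)+\dim HF^1(\alpha,\beta)=i(\alpha,\beta)$ (the paper's Lemma~\ref{HFsurface}). You propose to ``put the curves in pairwise minimal position simultaneously'' (e.g.\ take geodesic representatives) and then invoke the no-bigon criterion plus independence of the almost complex structure. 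But the objects $L_i$ and $\phi^N(L_j)$ are \emph{fixed} Lagrangians, and moving them into minimal position generally requires a non-Hamiltonian isotopy; over the Novikov field, Floer cohomology of circles on a closed surface is \emph{not} invariant under arbitrary isotopies --- for instance, two isotopic but disjoint circles have $HF^*=0$ while $HF^*(\alpha,\alpha)\cong H^*(S^1)$. So independence of the almost complex structure is not the issue, and the hypothesis that $\alpha$ and $\beta$ are non-isotopic must enter the invariance argument itself; your sketch never uses it at that point, which is exactly where the difficulty sits.

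The paper closes this by decomposing the isotopy to minimal position into Hamiltonian pieces (invariance by general theory) and pieces that create or remove no intersection points, and then proving invariance under the latter by a combinatorial argument: since $\alpha\not\simeq\beta$, each matrix entry of the differential is a count of at most one bigon, each row and column has at most two nonzero entries, and the associated graph has no loops, so the rank of $d^0$ and $d^1$ is determined by the positions of the nonzero entries alone and cannot jump as areas (hence Novikov weights) vary along the isotopy. Your proposal would be complete if you either reproduce an argument of this kind or cite a precise invariance statement for non-isotopic curves over the Novikov field; as written, the step from the given representatives to minimal-position representatives is asserted rather than proved. The remaining points you raise (that $\phi_*^N L_j$ is literally the curve $\phi^N(L_j)$, that isotopic/periodic pairs only occur for small $N$ since a pseudo-Anosov map has no periodic essential isotopy classes, and that \eqref{intersectionGrowth} applies to every pair of essential curves) are fine and match the paper.
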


It is a classical result, see \cite{flp}, that the topological entropy of a
pseudo-Anosov map $\phi$ is $\log\lambda$, and that $\phi$ minimizes topological
entropy in its isotopy class.

\bpr
Note first that Theorem \ref{entropyPoincareSeries} is also valid, with
essentially the same proof, for saturated $\mathbb Z/2$-graded
$A_\infty$-categories in the sense that
\be h_0(F)=\lim_{N\to\infty}\frac{1}{N}\log\left(\dim
\mathrm{Ext}^0(G,F^NG)+\dim\mathrm{Ext}^1(G,F^NG)\right)\ee
for any endofunctor $F$.
On the other hand, if $\alpha,\beta$ are simple closed loops in $M$ then
\be\dim\mathrm{Ext}^0(\alpha,\phi^N(\beta))+\dim\mathrm{Ext}^1(\alpha,\phi^N(\beta))=
i(\alpha,\phi^N(\beta)) \ee
for $N\gg 0$ by Lemma \ref{HFsurface} below.
Now, $D^{\pi}\mathrm{Fuk}(M)$ has a generator $G$ which is a direct sum of
$2g+1$ simple loops.
In view of \eqref{intersectionGrowth} the theorem follows.
\epr

The following is considered well-known to experts in Floer theory.

\begin{lemma} \label{HFsurface}
Let $M$ be a closed surface of positive genus with symplectic structure,
$\alpha$ and $\beta$ simple closed loops on $M$ which are not isotopic.
Then
\be \label{HFintersect} \dim HF^0(\alpha,\beta)+\dim
HF^1(\alpha,\beta)=i(\alpha,\beta)\ee
where $HF^k(\alpha,\beta)$ denotes Floer cohomology over the Novikov field.
\end{lemma}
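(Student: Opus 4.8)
The plan is to reduce the identity $\dim HF^0(\alpha,\beta)+\dim HF^1(\alpha,\beta)=i(\alpha,\beta)$ to the statement that the Floer differential on the minimal-intersection model vanishes, and that no further cancellation occurs when passing to Novikov coefficients. First I would put $\alpha$ and $\beta$ in \emph{minimal position}, so that the number of transverse intersection points is exactly $i(\alpha,\beta)$; this is the standard fact (via the bigon criterion, see \cite{flp}) that two simple closed curves on a surface with $i(\alpha,\beta)$ realized geometrically bound no immersed bigons. The Floer complex $CF^*(\alpha,\beta)$ is then freely generated over $\Lambda_{\mathbb R}$ by these $i(\alpha,\beta)$ intersection points, graded in $\mathbb Z/2$ by the sign/index of the intersection, so $\dim_{\Lambda_\RR} CF^0 + \dim_{\Lambda_\RR} CF^1 = i(\alpha,\beta)$, and it suffices to show the differential is zero.

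The key step is the vanishing of the differential. The Floer differential counts (with Novikov weights $T^{\omega}$) rigid immersed holomorphic strips between consecutive intersection points. For curves in a surface, a standard index computation shows a rigid strip has a single interior region which, by the Riemann mapping theorem, is an immersed bigon with corners at the two intersection points; conversely any such bigon contributes. Minimal position rules these out, so every such strip has positive symplectic area yet cannot exist, whence $d=0$ on the nose (not merely after a filtration argument). A small subtlety: one must treat holomorphic strips that wrap, i.e. that have the same two endpoints but higher area classes; these again correspond to immersed bigons (possibly covering) and are excluded by the same bigon criterion, and in any case the grading obstruction — an honest strip changes the $\mathbb Z/2$ grading by one — together with the absence of bigons kills all of them.

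Granting $d=0$, we get $HF^k(\alpha,\beta)=CF^k(\alpha,\beta)$ for $k\in\{0,1\}$, and summing dimensions yields $i(\alpha,\beta)$. For the application in Theorem~\ref{pseudoAnosovEntropy} we only need the equality for $N\gg 0$, where $\phi^N(\beta)$ and $\alpha$ are automatically in minimal position (any bigon would persist under the pseudo-Anosov dynamics, contradicting the growth \eqref{intersectionGrowth}); but in fact the argument above gives it whenever $\alpha$ and the relevant representative of $\beta$ are in minimal position, which one can always arrange.

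I expect the main obstacle to be bookkeeping around orientations, spin structures, and the $\mathbb Z/2$-grading: one must check that the bounding spin structure and chosen orientations make the Floer complex well-defined over $\Lambda_\RR$ with the stated $\mathbb Z/2$-grading, and that ``no bigons'' genuinely forces $d=0$ rather than $d$ being strictly upper-triangular with respect to the area filtration (which would only give an \emph{inequality} $\dim HF^0+\dim HF^1 \le i(\alpha,\beta)$). The resolution is that on a surface the moduli space of index-one strips is, after the bigon analysis, literally empty in minimal position — there is no room for higher-area contributions either, since a disk with two boundary punctures mapping to the surface with the two curves as boundary conditions must, by the maximum principle and the Gauss–Bonnet/index formula, be an embedded or immersed bigon. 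Hence the differential vanishes identically and the identity is exact. I would cite \cite{flp} for the bigon criterion and the surface Floer-theory folklore references (e.g. the discussion in \cite{seidel2011}) for the strip–bigon correspondence, noting as the paper does that this is ``well-known to experts.''
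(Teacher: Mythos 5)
Your argument for the minimal-position case agrees with the paper's: no bigons means the differential on $CF^*(\alpha,\beta)$ vanishes, so the dimensions add up to $|\alpha\cap\beta|=i(\alpha,\beta)$. The genuine gap is in the reduction to that case. The lemma is about the given curves $\alpha,\beta$, which need not intersect minimally, and over the Novikov field $HF^*$ is only known to be invariant under \emph{Hamiltonian} isotopies; the isotopy moving $\alpha$ to a minimal-position representative $\alpha'$ is in general not Hamiltonian (it sweeps area, so the Novikov weights in the differential change), and you cannot simply "arrange" minimal position without proving that $\dim HF^*$ survives this move. This is precisely the content of the paper's proof: the isotopy from $\alpha$ to $\alpha'$ is decomposed into Hamiltonian isotopies and isotopies that create or remove no intersection points, and for the latter the invariance of $\dim HF^*$ is proved by hand --- using that $\alpha$ and $\beta$ are not isotopic, each matrix entry of the differential is (at most) a single bigon count, each row and column has at most two nonzero entries, the associated graph has no cycles, and hence the rank of the differential depends only on the \emph{positions} of the nonzero entries, not on their (area-dependent) values. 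Your proposal never addresses this non-Hamiltonian invariance, and your fallback --- that $\alpha$ and $\phi^N(\beta)$ are "automatically in minimal position for $N\gg0$" --- is not justified: \eqref{intersectionGrowth} controls the geometric intersection number, i.e.\ the minimum over the isotopy class, and in no way prevents the actual representative $\phi^N(\beta)$ from forming bigons with $\alpha$; in the application the hypothesis "$N\gg0$" is only used to ensure $\phi^N(\beta)$ is not isotopic to $\alpha$.

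A secondary point: the hypothesis that $\alpha$ and $\beta$ are not isotopic enters the paper's argument in an essential combinatorial way (it rules out closed chains of bigons and guarantees at most one bigon per matrix entry), whereas in your write-up it plays no explicit role beyond the statement. If you want to complete your approach, you must either prove the rank-invariance statement for the intersection-preserving part of the isotopy (as the paper does), or establish by some other means that $\dim HF^*(\alpha,\beta)$ over $\Lambda_{\mathbb R}$ depends only on the isotopy classes of the two non-isotopic curves; the minimal-position computation alone does not yield \eqref{HFintersect} as stated.
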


\bpr
If $\alpha$ and $\beta$ are already in minimal position, i.e.
$i(\alpha,\beta)=|\alpha\cap\beta|$, then \eqref{HFintersect} is clear, since
there are no immersed bi-gons with boundary on $\alpha\cup\beta$, and thus the
Floer differential vanishes on $CF^*(\alpha,\beta)$ which has a basis given by
the intersection points.

In general, there is an embedded loop $\alpha'$ which is isotopic to $\alpha$
and intersects $\beta$ minimally.
Moreover, we can construct an isotopy from $\alpha$ to $\alpha'$ which is a
composition of Hamiltonian isotopies and isotopies which do not create or remove
any intersection points between $\alpha$ and $\beta$.
The invariance of $HF^*(\alpha,\beta)$ under the former is true by the general
theory, we will show that invariance also holds for the latter.

Let us observe first that near each intersection point $p$ of $\alpha$ and
$\beta$ there is a chart taking $\alpha$ to the x-axis and $\beta$ to the
y-axis. In such a chart, any bi-gon contributing to $d[p]$ must map locally to
either the second or fourth quadrant, see \cite{abouzaid2008}.
Furthermore, the assumption on $\alpha$ and $\beta$ ensures that bi-gons
contributing to $d^0$ (resp. $d^1$) cannot form a closed chain.

Let $M$ be the matrix representing the Floer differential $d^0$ or
$d^1$, with respect to the basis given by intersection points.
Since $\alpha$ and $\beta$ are not isotopic, there can be at most
one bi-gon contributing to any entry of $M$, hence the position of
the non-zero entries does not change under the isotopy. The
observations made above translate into the following properties of
$M$.
\begin{enumerate}
  \item Any column or row of $M$ has at most two non-zero entries.
  \item Let $Q$ be the quiver with vertices the columns and rows of $M$ and an
  arrow from $i$ to $j$ if the entry in the $i$-th column and $j$-th row of $M$
  is non-zero. Then the underlying undirected graph of $Q$ has no loops.
\end{enumerate}
One shows that the rank of a matrix satisfying the second property is
determined by the positions of the non-zero entries alone, and not the specific
values. 
We conclude that $\dim HF^*(\alpha,\beta)$ is invariant under isotopy.
\epr

\section{Density of phases} \label{Kronecker pairs}

\subsection{Preliminaries}

\subsubsection{On Bridgeland stability conditions}

We start by  recalling  some definitions and results by
Bridgeland. Here we  introduce the notations $\HH^{\mc A}$(see
Definition \ref{H to A} and Remark \ref{HN property and locally
finiteness}) and  $P_\sigma$ (the set of semistable phases of a
stability condition $\sigma$), which are  useful later.
% \ref{subsection for Kronecker} and its generalization:  subsection \ref{kronecker pairs}.

In \cite{Bridg1} a stability conditions on a triangulated category
$\mc T$  is defined as a pair $\sigma=( \mc P, Z )$ satisfying certain
 axioms, where the first component is   a family of  full
additive subcategories  $\{\mc P(t)\}_{t \in \RR}$ (by the axioms
it follows that they are abelian) and the second component is  a
group homomorphism $\bd K_0(\mc T) &\rTo^Z & \CC\ed$. The non-zero objects in $\mc P(t)$ are said to be $\sigma$-semistable of
phase $t$ and   the phase $\phi_\sigma(E)$ of a semistable  $E\in \mc P(t)$   is well defined by $\phi_\sigma(E)=t$.

 To give a
description of the Bridgeland stability conditions on $\mc T$,  we recall first  the
following definition:

\begin{df} \label{Z-semistable} Let $(\mc A,\bd K_0(\mc A)& \rTo^{Z}& \CC \ed ) $ be an  abelian category  and a stability
function on it \footnote{I.e. $Z$ is homomorphism, s. t. $Z(X)\in \HH=\{ r \exp(\ri \pi t)\vert r>0 \ \ \mbox{and} \ \ 0< t \leq 1 \}$ for $X \in \mc A$, $X\neq 0$}. A non-zero object $X\in \mc A $ is said to be $Z$-semistable   of phase $t$ if    every $\mc A$-monic $X' \rightarrow X$   satisfies\footnote{For $u\in \HH$ we denote by $\arg(u)$ the unique number satisfying $\arg(u)\in (0,1]$, $u=\exp(\ri \pi \arg(u))$. It is convenient to set $\arg(0)=-\infty $.} $ \arg Z(X')\leq \arg Z(X)=\pi t$ (if equality is attained only for $X'\cong X$ then $X$ is said to be stable).
\end{df}

 If  $\mc A \subset \mc T$ is a bounded $t$-structure and     $Z: K_0(\mc A) \rightarrow \CC $ is a stability function, which satisfies the HN property (\cite[Definition 2.3]{Bridg1})  then   the pair $\sigma=(\mc P, Z_e)$ defined by(see the proof of \cite[Proposition 5.3]{Bridg1}):
\begin{itemize}
    \item[\textbf{a)}] If $t\in (0,1]$, define ${\mc P}(t)\subset \mc A$ as the set of $Z$-semistable objects  of phase $t$ in
    $\mc A$(as defined in definition \ref{Z-semistable}). If $t\in (n,n+1]$, $n\in \ZZ$,  define ${\mc P}(t)=T^n({\mc P}(t-n))$.
    \item[\textbf{b)}] Define $\bd K_0(\mc T)& \rTo^{Z_e}& \CC \ed$, such  that:
    $\bd K_0(\mc A) & \rTo^{K_0(\mc A \subset \mc T)} &  K_0(\mc T)& \rTo^{Z_e}& \CC \ed$ $=  \bd K_0(\mc A)& \rTo^{Z}& \CC \ed$
\end{itemize}
is a stability condition on $\mc T$ (furthermore all stability conditions on $\mc T$ are obtained by this procedure). Let us denote:
\begin{df}\label{H to A} Let  $\mc A \subset \mc T$ be a  bounded t-structure  in a triangulated category $\mc T$.  We   denote by $\HH^{\mc A}$
the   family   of stability conditions in $\mc T$ obtained by \textbf{a)}, \textbf{b)} above varying $Z$ in the set of all  stability
functions on $\mc A$  with  HN property.
 In particular $  \HH^{\mc A} \ni (\mc P,Z) \mapsto Z_{\vert K_0(\mc A)}$ is a  bijection between $\HH^{\mc A}$ and this set.
\end{df}

 Bridgeland  proved that the set of all
stability conditions, satisfying a property called locally finiteness,  on a triangulated category $\mc T$  is   a complex manifold, denoted by $\st(\mc T)$.

\begin{remark} \label{HN property and locally finiteness} Let $\mc A\subset \mc T$ be as in the previous definition.  If $\mc A$ is an abelian category
of finite length, then any  stability function  $Z: K_0(\mc A) \rightarrow \CC $  satisfies the HN property (\cite[Proposition 2.4]{Bridg1}). If in addition
  $\mc A$ has finitely many, say $s_1,s_2,\dots,s_n$, simple objects then all   stability conditions in  $\HH^{\mc A}$ are locally finite.
   Whence, in this setting we have  $\HH^{\mc A} \subset \st(\mc T)$ and bijection  $  \HH^{\mc A} \ni (\mc P,Z) \mapsto (Z(s_1),\dots,Z(s_n)) \in \HH^n$.
\end{remark}
Finally, we introduce the notation:
\begin{df} \label{P_sigma} Let  $\mc T$ be a triangulated category and  $\sigma =(\mc P, Z) \in \st(\mc T)$ a stability condition on it.  We
denote:\footnote{When the triangulated category $\mc T$ is  fixed in advance we write just  $P_\sigma$.}
\be P^{\mc T}_\sigma = \exp(\ri \pi \{t \in \RR \vert {\mc P}(t) \neq \{ 0 \} \})\subset S^1.\ee
By ${\mc P}(t+1)={\mc P}(t)[1]$\footnote{which is one of the Bridgeland's axioms} it follows $-P^{\mc T}_\sigma=P^{\mc T}_\sigma$.
\end{df}

%\begin{remark} \label{from eq bet t-struct to P_sigma} Let ${\mc A}_1$, ${\mc A}_2$ be bounded $t$-structures in triangulated categories $\mc T_1$, $\mc T_2$ and $F:\mc A_1\rightarrow \mc A_2$ be an  equivalence of abelian categories. Let $\HH^{\mc A_i}\subset \st(\mc T_i)$, $i=1,2$ be the corresponding families. Then we have a natural identification   $F^*:\HH^{\mc A_2}\rightarrow \HH^{\mc A_1}$ and
%$ F^*(\sigma_2)=\sigma_1  $ implies $ P^{\mc T_1}_{\sigma_1}=P^{\mc T_2}_{\sigma_2}. $
%\end{remark}

%One of the results  of this subsection \ref{Kronecker pairs} is the following table:
%\begin{gather} \label{table} \begin{array}{| c | c | c | c | c | c | c |}
%  \hline
%         \mbox{Dynkin quivers}            &    P_\sigma  \  \mbox{is always finite}                 \\ \hline
%   \mbox{  Euclidean quivers}  &  P_\sigma  \  \mbox{is either finite  or} \ \mbox{has exactly two limit points}         \\ \hline
%    \mbox{All other quivers} &   \ P_\sigma \ \mbox{is dense in an arc }   \mbox{for  a family of stability conditions}\\ \hline
%     \end{array}
%  \end{gather}

%The density in an arc of $P_\sigma$ for some $\sigma$, observed in
%the third row, we call \textbf{Property D}. So we define:
%\begin{df}[Property D] Let $\mc T$ be a triangulated category. If there exist a stability condition $\sigma \in \st(D^b(\mc T))$  s. t.
%  $P_\sigma$ is dense in   an arc of non-zero length\footnote{ $P_\sigma$ is not necessarily  contained in this arc}, then we say that $\mc T$ has property D.
%\end{df}

\subsubsection{On $\theta$-stability and  a theorem by King} \label{stability in abeilan}  In the next subsection  we use a result by King. We recall first

\begin{df}[$\theta$-stability] \label{theta stability} Let $\theta:K_0(\mc A) \rightarrow \RR $ be a non-trivial group homomorphism, where $\mc A$ is an
 abelian category. Then $X \in \mc A$ is called $\theta$-semistable if $\theta(X)=0$ and for each monic arrow $X'\rightarrow X$ in $\mc A$ we have $\theta(X')\geq 0$ (if $\theta(X')=0$ only for the sub-objects $0$ and $X$ then it is called $\theta$-stable).
 \end{df}
 \begin{remark} $Z$-semistable of phase $t$ (as defined in Definition \ref{Z-semistable}) is the same as $\theta$-semistable with $\theta=-Im(\re^{-\ri \pi t} Z)$.  \end{remark}
 From Proposition 4.4 in \cite{King} it follows
 \begin{prop}[A. King]  \label{prop king} Let $A$ be a finite dimensional, hereditary $\CC$-algebra. Let $\alpha \in K_0(A\mbox{-}{\rm Mod})$.
 Then the following conditions are equivalent:

\begin{enumerate}
    \item There exist $X \in A\mbox{-}{\rm Mod}$  and  a non-trivial $\theta:K_0(A\mbox{-}{\rm Mod}) \rightarrow \RR$, s. t. $[X]=\alpha$ and $X$ is $\theta$-stable.
    \item $\alpha$ is a Schur root, which by definition means that  some $Y\in A\mbox{-}{\rm Mod} $ with $[Y]=\alpha$  satisfies ${\rm End}_{A\mbox{-}{\rm Mod}}(Y)=\CC$.
\end{enumerate}
  \end{prop}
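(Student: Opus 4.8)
The plan is to establish the two implications separately, handling $(1)\Rightarrow(2)$ directly and deducing $(2)\Rightarrow(1)$ from \cite[Proposition 4.4]{King} (together with the results of Schofield on general representations that underlie it).

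For $(1)\Rightarrow(2)$, I would show that a $\theta$-stable module $X$ with $[X]=\alpha$ automatically satisfies $\mathrm{End}_{A\mbox{-}{\rm Mod}}(X)=\CC$, so that $Y=X$ serves as the required Schurian witness. Given a non-zero endomorphism $f\colon X\to X$, note that $\ker f$ is a proper submodule, $\mathrm{im}\,f\cong X/\ker f$ is non-zero, and $\theta(\ker f)+\theta(\mathrm{im}\,f)=\theta(X)=0$. Since $X$ is $\theta$-semistable, both summands are $\geq 0$, hence both vanish; $\theta$-stability then forces $\ker f\in\{0,X\}$, and as $f\neq 0$ we get $\ker f=0$, so $f$ is injective and therefore (by finite-dimensionality of $X$) an isomorphism. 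Thus $\mathrm{End}_{A\mbox{-}{\rm Mod}}(X)$ is a finite-dimensional division algebra over the algebraically closed field $\CC$, so it equals $\CC$. This step is entirely routine.

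For $(2)\Rightarrow(1)$, I would invoke King's proposition. Since $A$ is finite-dimensional and hereditary over $\CC$, it is Morita equivalent to the path algebra $\CC Q$ of a finite acyclic quiver $Q$, so one may work in $\mathrm{Rep}(Q)$, where $K_0$ is identified with $\ZZ^{Q_0}$ and carries the Euler form $\langle-,-\rangle$. King's result says that for such an algebra the Schur roots are exactly the dimension vectors $\alpha$ admitting an integral weight $\theta$ with $\theta(\alpha)=0$ for which the general representation of dimension vector $\alpha$ is $\theta$-stable; one then takes $X$ to be such a general representation and $\theta$ the corresponding weight (a real-valued $\theta$ as in the statement is no more general, the destabilising conditions being finitely many strict linear inequalities together with one linear equality, whose solution cone meets the rational points).

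The main obstacle is precisely this $(2)\Rightarrow(1)$ direction — the construction of a weight $\theta$ relative to which a general Schurian representation of class $\alpha$ has no destabilising subobjects. A naive choice such as the antisymmetrised Euler form $\beta\mapsto\langle\alpha,\beta\rangle-\langle\beta,\alpha\rangle$ can vanish identically on the relevant sub-dimension-vectors (already for the Kronecker quiver with $\alpha=(1,1)$), so one needs the genericity argument behind King's proposition: only finitely many sub-dimension-vectors occur for submodules of the general $Y$, and $\theta$ is chosen generically inside the hyperplane $\{\theta(\alpha)=0\}$ so as to be strictly positive on all of them. Since this is exactly the content of the cited result, no new argument is required beyond importing it and performing the Morita reduction.
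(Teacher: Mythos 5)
Your proposal is correct and follows essentially the same route as the paper, which gives no argument beyond citing King's Proposition 4.4; your only addition is the routine direct proof of $(1)\Rightarrow(2)$, which is fine. (One aside is inaccurate: for the Kronecker quiver with $\alpha=(1,1)$ the antisymmetrised Euler form does not vanish on the relevant sub-dimension vector, since $\langle\alpha,(0,1)\rangle-\langle(0,1),\alpha\rangle=\mp 2$, and with the correct sign this canonical weight in fact stabilises the generic representation — but this remark plays no role in your argument, which defers to King for that direction anyway.)
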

This Proposition will be used in the proof of Corollary \ref{density for kronecker0}.

\subsection{Dynkin, Euclidean quivers and Kronecker quiver} \label{Euc and Kron}
In this subsection we comment the set $P_\sigma$ as $\sigma$ varies in the set of stability conditions on Dynkin, Euclidean quivers and on the Kronecker quiver. The main results here are Lemma \ref{Dynkin},
Corollary \ref{coro for density1} and Corollary \ref{density for kronecker}.

\subsubsection{Quivers and Kac's theorem} \label{quivers and Kac}
For any quiver $Q$ we  denote  its set of vertices  by $V(Q)$, its set of arrows by $Arr(Q)$   and the underlying non-oriented graph by $\Gamma(Q)$. Let
\be \label{origin,end} Arr(Q) \rightarrow  V(Q)\times V(Q)  \ \  \ \ a \mapsto (s(a),t(a)) \in V(Q) \times V(Q) \ee  be the   function assigning
 to an arrow $a\in Arr(Q)$ its origin $s(a) \in V(Q)$ and its end $t(a) \in V(Q)$.  A vertex $v \in V(Q)$ is called \textit{source/sink} if all arrows touching
 it start/end at it (more precisely $v \neq t(a)$/$v \neq s(a)$ for each $a\in Arr(Q)$).

Throughout  this section \ref{Kronecker pairs} the term
\textit{Dynkin quiver} means a quiver $Q$,  s. t. $\Gamma(Q)$ is
one of the simply laced Dynkin diagrams  $A_m$,$m\geq 1$, $D_m$,
$m \geq 4$,  $E_6$,  $E_7$,  $E_8$ (see for example \cite[p.
32]{Barot}) and the term \textit{Euclidean quiver} means an
acyclic quiver $Q$,  s. t. $\Gamma(Q)$ is one of the extended
Dynkin diagrams  $\wt{A}_m$,$m\geq 1$, $\wt{D}_m$, $m \geq 4$,
$\wt{E}_6$,  $\wt{E}_7$,  $\wt{E}_8$ (see for example   \cite[fig.
(4.13)]{Barot}).  By $K(l)$, $l\geq 1$  we denote the quiver, which consists of two vertices with $l$
parallel arrows between them. Note that $K(1)$ is Dynkin, $K(2)$ is Euclidean. We call $K(l)$, $l\geq 3$ and
$l$-Kronecker quiver.

 Recall the  Kac's Theorem.
\begin{remark}[On Kac's Theorem] Let $Q$ be a connected quiver  without edges-loops.
 In \cite{Kac} is defined the positive root system of $Q$. We denote this root system by $\Delta_+(Q)\subset \NN^{V(Q)}$.
 For $X \in Rep_{\CC}(Q)$ we denote by $\ul{\dim}(X)\in \NN^{V(Q)}$ its dimension vector.  The main result of \cite{Kac} (we consider only the field $\CC$) is:
\be \label{Kac theorem} \{\ul{\dim}(X) \vert X \in Rep_{\CC}(Q), X \ \mbox{is indecomposable}\} =  \Delta_+(Q). \ee
\end{remark}

The Euler form of any quiver $Q$ is defined by
\be \label{euler form} \scal{\alpha,\beta}_Q=\sum_{j\in V(Q)} \alpha_j \beta_j - \sum_{j\in Q_1} \alpha_{s(j)} \beta_{t(j)}, \qquad \alpha, \beta \in \NN^{V(Q)}.\ee

The set  $\Delta_+(Q)$ has a simple description for Dynkin, extended Dynkin or hyperbolic quivers ($K(l)$, $l\geq 3$ are hyperbolic quivers) as shown by Kac in
\cite{Kac}.  It is determined by the Euler form as follows
\be \label{roots by euler} \Delta(Q)=\{r\in\NN^{V(Q)}\setminus \{0\}\vert \scal{r,r}_Q\leq 1\}, \qquad \Delta_+(Q)=\Delta(Q)\cap \NN^{V(Q)}.\ee

\ul{If $Q$ has no  oriented cycles}, (then $Q$ is called acyclic and the path algebra $\CC Q$ is finite dimensional) in addition  to this we have an isomorphism
 $K_0(Rep_\CC(Q))\cong \ZZ^{V(Q)}$ determined by  $K_0(Rep_\CC(Q)) \ni [X]$ $\mapsto $ $\ul{\dim}(X) \in \ZZ^{V(Q)}$, where $X \in Rep_\CC(Q)$. In particular, for any homomorphism $Z:K_0(Rep_\CC(Q)) \rightarrow \CC$ and any $X \in Rep_\CC(Q)$ we have
\be \label{Z(X) for quivers} Z(X)= \sum_{i\in V(Q)} \ul{\dim}_i(X) \  Z(s_i)=(v,\ul{\dim}(X)), \qquad \{ v_i= Z(s_i) \}_{i\in V(Q)},\ee
 where $s_i$ is the simple representation with $\CC$ in the vertex $i\in V(Q)$ and $0$ in the other vertices.
 \textit{Throughout this section \ref{Kronecker pairs}   $(,)$ denotes the bilinear form  on  $\CC^{V(Q)} \times \CC^{V(Q)}$
  defined by  $(\alpha,\beta)=\sum_{i\in V(Q)} \alpha_i \beta_i $, $\alpha,\beta \in \CC^{V(Q)}$, NOT the symmetrization
  $\scal{\alpha,\beta}_Q+\scal{\beta,\alpha}_Q$ of $\scal{,}_Q$. We mention once this symmetrization and denote it by $(,)_Q$.}

\subsubsection{The inclusion \texorpdfstring{$P_\sigma \subseteq R_{v,\Delta_+}$}{\space}.}
\begin{lemma} Let $\mc T$ be any triangulated category. Then for each $\sigma =(\mc P, Z) \in \st(\mc T)$ we have:
$ \{t \in \RR \vert {\mc P}(t) \neq 0 \} = \{ \phi_{\sigma}(I) \vert I \ \mbox{is} \  \mc T \mbox{-indecomposable} \  \ \mbox{and } \ \sigma\mbox{-semistable} \}$.
\end{lemma}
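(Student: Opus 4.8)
The statement is the equality of two sets of real numbers: on one side the set of phases $t$ for which $\mc P(t)$ contains a nonzero object, on the other the set of phases of the objects that are simultaneously $\mc T$-indecomposable and $\sigma$-semistable. The inclusion ``$\supseteq$'' is immediate from the definitions: if $I$ is $\sigma$-semistable then by definition $I\in\mc P(\phi_\sigma(I))$ and $I\neq 0$, so $\phi_\sigma(I)$ belongs to the left-hand set; indecomposability of $I$ plays no role here. So the content is the reverse inclusion ``$\subseteq$'': given $t$ with $\mc P(t)\neq\{0\}$, produce an object that is both indecomposable and semistable and has phase $t$.

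The plan for ``$\subseteq$'' is to fix a nonzero $E\in\mc P(t)$ and exhibit an indecomposable summand of $E$ that still lies in $\mc P(t)$. The key input is that $\mc P(t)$ is a full additive subcategory of $\mc T$ which, by the Bridgeland axioms (as recalled in the Preliminaries), is in fact abelian, hence in particular closed under direct summands inside $\mc T$ (a summand of a semistable object of phase $t$ is again semistable of phase $t$: this is standard and follows because $\mathrm{Hom}(\mc P(t_1),\mc P(t_2))=0$ for $t_1>t_2$ together with the existence of the abelian structure). Thus it suffices to decompose $E$ into indecomposables in $\mc T$. First I would argue that $E$ admits \emph{some} nonzero indecomposable direct summand $I$. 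This is where one must be slightly careful in a general triangulated category: objects need not decompose into finitely many indecomposables, and indecomposable summands need not exist at all without some finiteness. However, $E$ lies in $\mc P(t)$, which is an abelian category in which $E$ has finite length — this is part of what ``locally finite'' stability conditions give us (every $\mc P(t)$ is of finite length, cf.\ Bridgeland), or more simply one invokes that $\st(\mc T)$ as used here consists of locally finite stability conditions. Given finite length of $\mc P(t)$, the object $E$ decomposes as a finite direct sum of indecomposable objects \emph{of the abelian category} $\mc P(t)$, and each such summand is an indecomposable object of $\mc T$ as well (a decomposition in $\mc T$ of an object of $\mc P(t)$ refines to a decomposition in $\mc P(t)$ by fullness). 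Taking $I$ to be any one of these summands gives an object which is $\sigma$-semistable, $\mc T$-indecomposable, and of phase $t$, establishing $t\in$ right-hand set.

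The one step that deserves care — and the likely main obstacle — is the finiteness used to guarantee an indecomposable summand exists and that the summand is again semistable. There are two clean ways to handle it: either cite that all stability conditions in $\st(\mc T)$ (equivalently, all locally finite ones, which is the standing convention here as recalled in Remark~\ref{HN property and locally finiteness} and Definition~\ref{P_sigma}) have each $\mc P(t)$ of finite length and hence Krull–Schmidt-like behavior for a fixed object; or, avoiding local finiteness, observe that the statement is really only used later for $\mc T=D^b(kQ)$ and similar settings where objects are genuinely Krull–Schmidt, so the finite-length argument applies verbatim. I would write the proof using the finite-length property of $\mc P(t)$, since that is exactly the hypothesis packaged into ``$\sigma\in\st(\mc T)$'' as the paper uses it. With that in hand the argument is short: $\supseteq$ is by definition, and $\subseteq$ follows by picking an indecomposable summand of any nonzero object of $\mc P(t)$ and noting it stays in $\mc P(t)$.
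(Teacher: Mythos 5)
Your proposal is correct and follows essentially the same route as the paper: use local finiteness of $\sigma$ to get that $\mc P(t)$ has finite length, decompose a nonzero object of $\mc P(t)$ into indecomposables of the abelian category $\mc P(t)$, and use fullness (equality of endomorphism rings, hence absence of nontrivial idempotents) to conclude these summands are indecomposable in $\mc T$; the reverse inclusion is immediate, as you note.
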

\bpr Let $X \in \mc P(t)$ be non-zero. Since $\mc P(t)$ is  of finite length then we have a decomposition in  $\mc P(t)$ of the form
$X \cong \bigoplus_{i=1}^n X_i$, where  $X_i$ are indecomposable in ${\mc P}(t)$, therefore (here we use that $\mc P(t)$ is abelian)
there are not non-trivial idempotents in ${\rm End}_{{\mc P}(t)}(X_i)={\rm End}_{\mc T}(X_i)$, hence $X_i$ is indecomposable in $\mc T$.
Whence, we see that $t=\phi_{\sigma}(X_i)$, where  $X_i$ is an indecomposable in $\mc T$ and $\sigma$-semistable.  The lemma follows.
\epr

\begin{coro} \label{coro P_sigma subset 1}  Let $\mc A$ be a hereditary abelian category. For each $\sigma =(\mc P, Z) \in \st(D^b(\mc A))$ holds the inclusion:
\be \label{P_sigma subset 1} P_\sigma  \subseteq \left \{\pm \frac{ Z(X)}{\abs{Z(X)}}\left \vert X \  \mbox{is indecomposable in } \ {\mc A}, Z(X)\neq 0 \right. \right \}. \ee
\end{coro}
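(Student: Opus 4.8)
The plan is to obtain this as a formal consequence of the preceding lemma together with the structure of the bounded derived category of a hereditary abelian category. First I would apply that lemma with $\mc T = D^b(\mc A)$: it rewrites $\{t \in \RR \mid \mc P(t) \neq 0\}$ as the set of phases $\phi_\sigma(I)$ of the objects $I$ which are simultaneously indecomposable in $D^b(\mc A)$ and $\sigma$-semistable. Passing through the map $t \mapsto \exp(\ri\pi t)$ then identifies $P_\sigma$ with the set of all $\exp(\ri\pi\phi_\sigma(I))$ for such $I$, so it suffices to place each of these points inside the right-hand side of \eqref{P_sigma subset 1}.

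The second ingredient is the classical fact that over a hereditary $\mc A$ every object of $D^b(\mc A)$ is isomorphic to the direct sum of the shifts of its cohomology objects; hence every indecomposable object of $D^b(\mc A)$ has the form $X[n]$ with $X$ indecomposable in $\mc A$ and $n \in \ZZ$ (the same observation already used in the quiver subsection above). Writing an indecomposable $\sigma$-semistable $I$ as $X[n]$, I would note that the shift $[-n]$ is an autoequivalence of $D^b(\mc A)$ sending $\mc P(t)$ to $\mc P(t-n)$, so $X = I[-n]$ is again $\sigma$-semistable; in particular $Z(X) \neq 0$ and $Z(X) = \abs{Z(X)} \exp(\ri\pi\phi_\sigma(X))$ by the definition of the phase. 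Combining this with $\phi_\sigma(X[n]) = \phi_\sigma(X) + n$ gives
\be \exp(\ri\pi\phi_\sigma(I)) = (-1)^n \frac{Z(X)}{\abs{Z(X)}} = \pm\frac{Z(X)}{\abs{Z(X)}},\ee
with $X$ indecomposable in $\mc A$ and $Z(X) \neq 0$, which is exactly the containment asserted by \eqref{P_sigma subset 1}.

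I do not expect a genuine obstacle: the corollary is essentially bookkeeping on top of the preceding lemma. The only points requiring (routine) care are that a $\sigma$-semistable object automatically has nonzero central charge — this is built into Bridgeland's notion of a phase — that $\sigma$-semistability is preserved by shifts, and the hereditary splitting statement for $D^b(\mc A)$, which follows from the vanishing of $\Ext^{\ge 2}$ in $\mc A$ forcing the connecting maps between the cohomology pieces of the canonical tower to vanish up to isomorphism. Once these are granted, the displayed computation of $\exp(\ri\pi\phi_\sigma(X[n]))$ is a one-line verification.
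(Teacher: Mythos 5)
Your proposal is correct and follows essentially the same route as the paper: reduce to indecomposable semistable objects via the preceding lemma, use heredity to write such an object as $X[n]$ with $X$ indecomposable in $\mc A$, and apply Bridgeland's axiom $Z(E)=m(E)\exp(\ri\pi\phi_\sigma(E))$ to identify the phase with $\pm Z(X)/\abs{Z(X)}$. The only cosmetic difference is that you apply the axiom to the shifted object $X=I[-n]$ (using $\mc P(t+1)=\mc P(t)[1]$), whereas the paper applies it to $I$ itself and uses $Z(I)=(-1)^nZ(X)$ in $K_0$; these are the same computation.
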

\bpr  Take any $t\in  \RR $ with $\mc P(t) \neq \{0\}$. From the previous lemma there is  a semi-stable, indecomposable  $X\in   D^b(\mc A)$, s. t. $\phi_{\sigma}(X)=t$.
Since $\mc A$ is hereditary then $X=X'[i]$ for some indecomposable $X'\in {\mc A}$, $i \in \ZZ$. Now we can write
\ben (-1)^i Z(X')=Z(X)=m(X) \exp(\ri \pi \phi_\sigma(X))=m(X) \exp(\ri \pi t)  \qquad m(X)>0,\een
where we use that $X$ is $\sigma$-semistable and one of the Bridgeland's axioms  (\cite[Definition 1.1 a)]{Bridg1}). The corollary is proved.
\epr
When ${\mc A}=Rep_{\CC}(Q)$ with $Q$-acyclic  we can rewrite this corollary in a useful form.
 Putting  \eqref{Kac theorem} and \eqref{Z(X) for quivers} in the righthand side of \eqref{P_sigma subset 1} with $\mc A = Rep_\CC(Q)$
 we get a set  $\left \{\pm \frac{ (v,r)}{ \abs{ ( v,r ) }}\left \vert r\in \Delta_+(Q), (v,r)\neq 0 \right. \right \}$,
 where $v\in \CC^{V(Q)}$ is a non-zero vector. It is useful to define
 \begin{df} For any finite set $F$, any  subset $A\subset \NN^{F}\setminus \{0\}$ and any non-zero vector $v\in \CC^{F}$ we denote
 \footnote{When the set $F$ is clear  we write just  $R_{v,A}$.}
 \be\label{R_v,A}  R^{F}_{{v},A}= \left \{\pm \frac{ ({v},{r})}{ \abs{ ( {v},{r} ) }}\left \vert {r}\in A, ({v},{r})\neq 0 \right. \right \} \subset S^1, \
  \mbox{where} \ ({v},{r})=\sum_{i\in F} v_i \ r_i. \ee
 \end{df}
 Then we can rewrite \eqref{P_sigma subset 1} as follows (we assume that $Q$ is an acyclic, because we used \eqref{Z(X) for quivers}, which holds only
 for acyclic quivers) :
 \begin{coro} \label{coro P_sigma subset 2}  Let $Q$ be an acyclic quiver. For any  $\sigma =(\mc P, Z) \in$  $  \st(D^b(Rep_\CC(Q)))$  holds the inclusion
\be \label{P_sigma subset 2} P_\sigma  \subseteq R_{{v},\Delta_+(Q)} \qquad v=\{ {v}_i= Z(s_i) \}_{i\in V(Q)}. \ee
\end{coro}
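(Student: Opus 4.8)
The plan is to obtain the inclusion directly from Corollary~\ref{coro P_sigma subset 1}, simply rewriting the right-hand side of~\eqref{P_sigma subset 1} in terms of dimension vectors and then invoking Kac's theorem.

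First I would record the categorical input: since $Q$ is acyclic, the path algebra $\CC Q$ is finite-dimensional of global dimension at most one, so $\mc A = Rep_\CC(Q)$ is a hereditary abelian category. Hence Corollary~\ref{coro P_sigma subset 1} applies verbatim to $\mc T = D^b(Rep_\CC(Q))$, giving
\[
P_\sigma \subseteq \left\{ \pm\frac{Z(X)}{\abs{Z(X)}} \left\vert\ X \text{ indecomposable in } Rep_\CC(Q),\ Z(X)\neq 0 \right. \right\}.
\]

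Next I would simplify this set. For acyclic $Q$ one has the isomorphism $K_0(Rep_\CC(Q)) \cong \ZZ^{V(Q)}$, $[X]\mapsto \ul{\dim}(X)$, and by~\eqref{Z(X) for quivers} one has $Z(X) = (v,\ul{\dim}(X))$ with $v = \{v_i = Z(s_i)\}_{i\in V(Q)}$. Therefore $\pm Z(X)/\abs{Z(X)}$ depends on $X$ only through its dimension vector $\ul{\dim}(X) \in \NN^{V(Q)}\setminus\{0\}$, so the displayed set equals $R_{v,D}$ in the sense of Definition~\eqref{R_v,A} (with $F = V(Q)$ and $A = D$), where $D$ denotes the set of dimension vectors of indecomposable representations of $Q$. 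Finally, Kac's theorem~\eqref{Kac theorem} gives $D = \Delta_+(Q)$, so $R_{v,D} = R_{v,\Delta_+(Q)}$, which is exactly~\eqref{P_sigma subset 2}.

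I do not expect a genuine obstacle here: once Corollary~\ref{coro P_sigma subset 1} and Kac's theorem are granted, the argument is pure bookkeeping and a matter of matching notation. The only point requiring a moment's care is that Kac's theorem is used in \emph{both} directions — every positive root of $Q$ is realized as the dimension vector of an indecomposable, and no vector outside $\Delta_+(Q)$ occurs — which is what licenses stating the inclusion with $\Delta_+(Q)$ on the right rather than with some a priori smaller set.
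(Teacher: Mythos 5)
Your proof is correct and follows exactly the paper's route: apply Corollary~\ref{coro P_sigma subset 1} to the hereditary category $Rep_\CC(Q)$, rewrite $Z(X)=(v,\ul{\dim}(X))$ via \eqref{Z(X) for quivers}, and identify the dimension vectors of indecomposables with $\Delta_+(Q)$ by Kac's theorem \eqref{Kac theorem}. (A minor remark: since the statement is only an inclusion, the containment $\{\ul{\dim}(X)\mid X\ \text{indecomposable}\}\subseteq\Delta_+(Q)$ already suffices; the reverse direction of Kac's theorem is only needed if one wants equality of the two sets.)
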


%For acyclic  quivers  this corollary reduces to
%\begin{lemma} Let $Q$ be an acyclic quiver. Let $\delta_+(Q)$ be its root system as defined in \cite{Q}. Then for any $\sigma \in \st(D^b(Rep_k(Q)))$ we have
%$P_{\sigma}$
%\end{lemma}
\subsubsection{On the set  \texorpdfstring{$R_{{v},\Delta_+(Q)}$}{\space}}

\begin{lemma} \label{Dynkin}
 Let $Q$ be a Dynkin quiver.  For any stability condition $\sigma \in \st(D^b(Rep_\CC(Q)))$ the set of semi-stable phases $P_\sigma$ is  finite.
\end{lemma}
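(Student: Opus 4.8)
The strategy is to deduce finiteness of $P_\sigma$ from Corollary \ref{coro P_sigma subset 2} by showing that for a Dynkin quiver $Q$ the set $R_{v,\Delta_+(Q)}$ is always finite, regardless of the vector $v \in \CC^{V(Q)}$. The point is that $\Delta_+(Q)$ is a \emph{finite} set for Dynkin $Q$ --- this is the classical Gabriel/root-system fact, and it is also recorded above via \eqref{roots by euler}, since $\{r \in \NN^{V(Q)} \setminus \{0\} \mid \scal{r,r}_Q \leq 1\}$ is finite when the Euler form (equivalently its symmetrization, the Cartan matrix $E + E^\top$) is positive definite, as it is for the simply laced Dynkin diagrams. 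Once $\Delta_+(Q)$ is finite, the set
\[
R_{v,\Delta_+(Q)} = \left\{ \pm \frac{(v,r)}{\abs{(v,r)}} \;\middle|\; r \in \Delta_+(Q),\ (v,r) \neq 0 \right\} \subset S^1
\]
is a finite subset of $S^1$ (it has at most $2\abs{\Delta_+(Q)}$ elements), and therefore $P_\sigma \subseteq R_{v,\Delta_+(Q)}$ is finite as well.

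First I would recall that for any $\sigma = (\mc P, Z) \in \st(D^b(Rep_\CC(Q)))$, the heart of the standard bounded $t$-structure $Rep_\CC(Q)$ is hereditary and of finite length, so Corollary \ref{coro P_sigma subset 2} applies and gives $P_\sigma \subseteq R_{v, \Delta_+(Q)}$ with $v_i = Z(s_i)$. Next I would invoke Kac's theorem \eqref{Kac theorem} together with the description \eqref{roots by euler}: for $Q$ Dynkin the positive roots are exactly the dimension vectors of indecomposable representations, and these form the finite set $\Delta_+(Q) = \{r \in \NN^{V(Q)} \setminus \{0\} \mid \scal{r,r}_Q \leq 1\}$. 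Finiteness of this set follows because positive definiteness of the associated quadratic form forces $\scal{r,r}_Q \geq c\norm{r}^2$ for some $c > 0$, bounding $\norm{r}$; alternatively one simply cites the classical classification (there are $m(m+1)/2$ roots for $A_m$, etc.). Then the displayed count above finishes the argument: a finite index set produces a finite $R_{v,\Delta_+(Q)}$, hence a finite $P_\sigma$.

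I do not expect a genuine obstacle here --- the lemma is essentially a formal consequence of the inclusion already established in Corollary \ref{coro P_sigma subset 2} plus the finiteness of the Dynkin root system. The only point requiring a line of care is that the bound is \emph{uniform} in $\sigma$: the cardinality of $P_\sigma$ is at most $2\abs{\Delta_+(Q)}$ for \emph{every} stability condition, which is exactly what the argument yields since $\Delta_+(Q)$ does not depend on $v$. It is worth remarking, for the sequel, that this uniform bound is what ultimately feeds into the table \eqref{table} and into Corollary \ref{upper bound for euc}.
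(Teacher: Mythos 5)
Your proposal is correct and follows exactly the paper's own argument: the authors likewise apply Corollary \ref{coro P_sigma subset 2} and conclude from the finiteness of $\Delta_+(Q)$ for Dynkin $Q$ that $R_{v,\Delta_+(Q)}$, and hence $P_\sigma$, is finite. The extra remarks (positive definiteness of the Euler form, the uniform bound $2\abs{\Delta_+(Q)}$) are harmless elaborations of the same proof.
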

\bpr It is well known that for a Dynkin quiver $Q$ the positive root system $\Delta_+(Q)$ is finite. Hence  for any non-zero ${v} \in \CC^{V(Q)}$
the set $R_{{v},\Delta_+(Q)}$ is finite. Now the lemma follows from Corollary \ref{coro P_sigma subset 2}.
\epr

\begin{lemma} \label{finite number sonv seq} Let $Q$ be an Euclidean quiver (see subsubsection \ref{quivers and Kac} for definition).
For any non-zero ${v} \in \CC^{V(Q)}$  the set $R_{{v},\Delta_+(Q)}$ is either finite or there exist $m\in \NN$, $p \in S^1$ and sequences
$\{  p^i_j  \subset S^1 \}_{i=1,\dots,m ; j\in\NN}$, s. t. $\{ \lim_{j\rightarrow \infty} p^i_j= p \}_{i=1}^m $ and
 $ R_{{v},\Delta_+}=\cup_{i=1}^m \{\pm p^i_j\}_{j\in\NN}$.
\end{lemma}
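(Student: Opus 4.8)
The plan is to use the explicit structure of $\Delta_+(Q)$ for a Euclidean quiver. Recall that for an extended Dynkin diagram the positive roots split into \emph{real roots} and \emph{imaginary roots}: there is a unique positive primitive imaginary root $\delta\in\NN^{V(Q)}$ with $\scal{\delta,\delta}_Q=0$, the imaginary roots are exactly $\{n\delta\mid n\ge 1\}$, and the real roots (those $r$ with $\scal{r,r}_Q=1$) form finitely many orbits under translation by $\delta$; concretely, $\Delta_+^{\mathrm{re}}(Q)=\{\beta_1,\dots,\beta_s\}+\NN\delta$ for finitely many ``base'' roots $\beta_1,\dots,\beta_s$ (some of these shifted families may coincide, but there are finitely many of them). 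This is classical, e.g.\ from Kac's book or \cite{Barot}. So $\Delta_+(Q)$ is a finite union of sets of the form $\{\gamma+n\delta\mid n\ge 0\}$ (taking $\gamma\in\{\delta,\beta_1,\dots,\beta_s\}$, and for the imaginary part $\gamma=\delta$, $n\ge 0$ giving $\{(n{+}1)\delta\}$).

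Next I would analyze $R_{v,A}$ for $A$ a single arithmetic-progression family $\{\gamma+n\delta\mid n\ge 0\}$. Since $(v,\cdot)$ is linear, $(v,\gamma+n\delta)=(v,\gamma)+n(v,\delta)$. There are two cases. If $(v,\delta)=0$, then $(v,\gamma+n\delta)=(v,\gamma)$ is constant in $n$, so this family contributes at most one point $\pm(v,\gamma)/|(v,\gamma)|$ to $R_{v,A}$. If $(v,\delta)\neq 0$, write $a=(v,\delta)$, $b=(v,\gamma)$; then the points are $\pm(b+na)/|b+na|$, and as $n\to\infty$ this tends to $\pm a/|a|$. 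Moreover $b+na=0$ for at most one value of $n$, so after discarding at most one term we get a genuine sequence $p_n^{(\gamma)}=(b+na)/|b+na|$ with $\lim_n p_n^{(\gamma)}=a/|a|$. Crucially the limit point $a/|a|=(v,\delta)/|(v,\delta)|$ is \emph{the same for every base root $\gamma$} with $(v,\delta)\neq 0$. Set $p:=(v,\delta)/|(v,\delta)|$ when $(v,\delta)\neq 0$.

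Now I would assemble. If $(v,\delta)=0$: every family contributes at most one point, and since there are finitely many families $R_{v,\Delta_+(Q)}$ is finite — the first alternative of the lemma. If $(v,\delta)\neq 0$: the contributions of families with $\gamma=\delta$ is the single point $p$ itself together with (from the $n\ge 1$ imaginary roots) the constant sequence $p$; the real-root families each contribute, after removing finitely many points, a sequence converging to $p$. Finitely many removed points can be absorbed: either adjoin them as constant eventual tails or note the statement only requires $R_{v,\Delta_+}=\bigcup_{i=1}^m\{\pm p_j^i\}_{j\in\NN}$, so one may re-index each sequence to include its finitely many exceptional values. Thus with $m$ equal to the number of base real-root families (plus possibly one for the constant sequence at $p$), and all sequences converging to $p$, we land in the second alternative. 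The sign symmetry $-R_{v,A}=R_{v,A}$ is built into the definition, so including $\pm$ throughout is automatic.

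The main obstacle I anticipate is purely bookkeeping: handling the at-most-one index $n$ per family where $(v,\gamma)+n(v,\delta)=0$ (so the point is undefined) and making sure the re-indexing to a sequence indexed by all of $\NN$ is stated cleanly; and citing precisely the structure theorem $\Delta_+^{\mathrm{re}}(Q)=\{\beta_1,\dots,\beta_s\}+\NN\delta$, $\Delta_+^{\mathrm{im}}(Q)=\NN_{\ge 1}\delta$ for extended Dynkin $Q$. The geometric content — that a sequence of points $(b+na)/|b+na|$ on the circle converges to $a/|a|$ — is elementary. No smoothness, Hochschild, or stability-theoretic input is needed beyond Corollary \ref{coro P_sigma subset 2}, which is not even invoked here; this lemma is a statement about $R_{v,\Delta_+(Q)}$ alone.
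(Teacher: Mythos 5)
Your proposal is correct and follows essentially the same route as the paper: decompose $\Delta_+(Q)$ into finitely many $\delta$-progressions $\beta_i+\NN\delta$ (the paper gets this from the $\ZZ\delta$-periodicity of $\Delta\cup\{0\}$ cited from Crawley-Boevey, you from the real/imaginary root description, which is the same structural fact), then observe that $(v,\beta_i)+n(v,\delta)$ either is constant (when $(v,\delta)=0$, giving finiteness) or normalizes to a sequence converging to $(v,\delta)/\abs{(v,\delta)}$. Your extra bookkeeping about the at-most-one vanishing term per progression is a harmless refinement of the paper's argument.
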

\bpr
 The root system $\Delta$ of an Euclidean quiver  $Q$ (as described in the first equality of \eqref{roots by euler}) has an element
 ${\delta} \in \NN_{\geq 1}^{V(Q)}$ with the properties $\Delta\cup \{0\} + \ZZ {\delta} \subset \Delta\cup \{0\}$ and
 $\Delta \cup \{0\}/\ZZ {\delta}$ is finite (see \cite[p. 18]{WCB2}). Hence there is  a finite set $\{{\alpha}_1,{\alpha}_2\dots,{\alpha}_{m} \} \subset \Delta $,
  s. t.  $ \Delta \cup \{0\}=\bigcup_{i=1}^m ({\alpha}_i + \ZZ {\delta} )$. If for any $i\in \{1,2,\dots,m\}$ we choose the minimal
  $n_i\in \ZZ$, s. t. ${\alpha}_i + n_i {\delta} \in \Delta_+$ and denote ${\beta}_i={\alpha}_i+n_i {\delta}$, then
  $ \Delta_+=\bigcup_{i=1}^m ({\beta}_i + \NN {\delta} )$.  From the definition  \eqref{R_v,A}
of $R_{{v},\Delta_+}$ we see that
\be R_{{v},\Delta_+} = \bigcup_{i=1}^m \left \{\pm \frac{({v},{\beta}_i)+n ({v},{\delta}) }{\abs{({v},{\beta}_i)+n ({v},{\delta})}} \vert n\in \NN, i=1,2,\dots, m , ({v},{\beta}_i)+n ({v},{\delta})  \neq 0 \right \}.\ee If $({v},{\delta})=0$, then the set is finite. Otherwise   for $i= \{1,2,\dots,m\}$ we have
$ \lim_{n\rightarrow \infty} \frac{({v},{\beta}_i)+n ({v},{\delta}) }{\abs{({v},{\beta}_i)+n ({v},{\delta})}} =\frac{({v},{\delta}) }{ \abs{({v},{\delta})} }. $
\epr
From this lemma and Corollary \ref{coro P_sigma subset 2} it follows:
\begin{coro}\label{coro for density1}  Let $Q$ be an Euclidean quiver.
Then for any $\sigma \in D^b(Rep_k(Q))$ the set $P_{\sigma}$ is
either finite or has exactly  two limit points of the type
$\{p,-p\}$\footnote{In \cite{DK}  are shown examples of both the
cases: $P_{\sigma}$ is  finite and $P_{\sigma}$ is  with two limit
points. }.
\end{coro}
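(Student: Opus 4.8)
The plan is to derive Corollary~\ref{coro for density1} as an immediate consequence of Lemma~\ref{finite number sonv seq} together with Corollary~\ref{coro P_sigma subset 2}, so the proof will be very short. First I would fix a stability condition $\sigma = (\mc P, Z) \in \st(D^b(Rep_k(Q)))$ on an Euclidean quiver $Q$ and set $v = \{v_i = Z(s_i)\}_{i \in V(Q)} \in \CC^{V(Q)}$; this $v$ is non-zero because $Z$ is a stability function, so $Z(s_i) \neq 0$ for every simple $s_i$. By Corollary~\ref{coro P_sigma subset 2} we have the inclusion $P_\sigma \subseteq R_{v, \Delta_+(Q)}$.

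Next I would apply Lemma~\ref{finite number sonv seq} to this $v$. It gives a dichotomy: either $R_{v,\Delta_+(Q)}$ is finite, in which case $P_\sigma$ is finite (being a subset of a finite set), and we are done; or there exist $m \in \NN$, a point $p \in S^1$, and sequences $\{p^i_j\}_{i=1,\dots,m;\, j \in \NN}$ with $\lim_{j\to\infty} p^i_j = p$ for each $i$, such that $R_{v,\Delta_+(Q)} = \bigcup_{i=1}^m \{\pm p^i_j\}_{j \in \NN}$. In the second case, the set of limit points of $R_{v,\Delta_+(Q)}$ is exactly $\{p, -p\}$: each $p^i_j \to p$ so each $-p^i_j \to -p$, and there are no other accumulation points since the whole set is a finite union of such convergent sequences (together with finitely many isolated points, which contribute no limit points). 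Since $P_\sigma$ is the image under $\exp(\ri \pi \cdot)$ of the phase set and $-P_\sigma = P_\sigma$ by Definition~\ref{P_sigma}, the set $P_\sigma$ is symmetric under $p \mapsto -p$, so its limit points form a subset of $\{p,-p\}$ closed under negation, hence either empty (when $P_\sigma$ is finite) or equal to $\{p,-p\}$.

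The only genuine subtlety is making sure that in the infinite case $P_\sigma$ really does accumulate — i.e. that it is not merely a finite subset of an infinite $R_{v,\Delta_+(Q)}$. I would argue this using the fact that $\sigma$-semistability for objects of the heart $Rep_\CC(Q)$ is $\theta$-stability (as recorded in the remark following Definition~\ref{theta stability}) and that, by King's theorem (Proposition~\ref{prop king}), every Schur root of the hereditary algebra $\CC Q$ is the class of a $\theta$-stable module for a suitable $\theta$; but here, since $\sigma$ is fixed, the cleaner route is simply to invoke the footnote's reference to \cite{DK} for the existence of examples where $P_\sigma$ has two limit points, and to phrase the corollary as an ``either/or'' statement whose proof only requires the upper bound. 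Thus the proof I would write is essentially: ``This follows immediately from Lemma~\ref{finite number sonv seq} and Corollary~\ref{coro P_sigma subset 2}, using $-P_\sigma = P_\sigma$.'' The main obstacle, such as it is, lies entirely in Lemma~\ref{finite number sonv seq} (the structure of $\Delta_+(Q)$ via the imaginary root $\delta$), which we are allowed to assume, so Corollary~\ref{coro for density1} itself requires no new ideas.
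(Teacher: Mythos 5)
Your proposal is correct and is essentially the paper's own argument: the paper likewise combines Corollary~\ref{coro P_sigma subset 2} with Lemma~\ref{finite number sonv seq} and the symmetry $-P_\sigma=P_\sigma$, obtaining both limit points $\{p,-p\}$ in the infinite case by noting that $P_\sigma\cap\{p^i_j\}_j$ and $P_\sigma\cap\{-p^i_j\}_j$ are infinite for some $i$ (the pigeonhole form of your Bolzano--Weierstrass step). Your worry about whether $P_\sigma$ actually accumulates is correctly dismissed, since the statement is an either/or and the existence of infinite examples is only the content of the footnote's reference to \cite{DK}.
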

\bpr  If  $P_\sigma$ is infinite, then by the previous lemma and $P_{\sigma}\subset R_{{v},\Delta_+}$,  $\{ {v}_i=Z(s_i) \}_{i \in V(Q)}$ it follows that
$ R_{{v},\Delta_+}=\cup_{i=1}^m \{\pm p^i_j\}_{j\in\NN}$ with $\lim_{j\rightarrow \infty} p^i_j= p$ for $i=1,2,\dots, m$. In particular $P_\sigma$ can not
have more than two limit points.   Since $P_{\sigma}$ is infinite  then $P_{\sigma} \cap \{ p^i_j\}_{j\in\NN}$, $P_{\sigma} \cap \{- p^i_j\}_{j\in\NN}$
are infinite sets for some $i$ (recall that $-P_{\sigma}=P_{\sigma}$). Hence $\{p,-p\}$ are limit points of $P_{\sigma}$. The corollary follows.
\epr
Next we discuss the set $R_{{v},\Delta_+}$ for the $l$-Kronecker quiver $K(l)$ (two vertices with $l$ parallel arrows between them), $l\geq 3$.
In this case the vertices are two, so that ${v}$ has two complex coordinates. I.e. $R_{{v},\Delta_+}$ consists of fractions like
$\frac{n z_1 + m z_2}{\abs{n z_1 + m z_2}}$, where $z_1,z_2 \in \CC$, $n,m \in \NN$.   It is useful to note
\begin{remark} \label{remark for z_1,z_2}  Let $ z_i= r_i \exp(\ri \phi_i)$, $r_i>0$, $i=1,2$, $0 < \phi_2 < \phi_1 \leq \pi$.
Then \begin{gather} \label{arg(alpha...)} \frac{\alpha z_1 + \beta z_2}{\abs{\alpha z_1 + \beta z_2}}=\left \{  \begin{array}{ c  c } \exp\left (\ri f\left (\frac{\alpha}{\beta} \right) \right ) &   \alpha \geq 0, \beta > 0,  \\
\exp(\ri \phi_1) & \alpha > 0, \beta = 0, \end{array}  \right .\end{gather}
where $f:[0,\infty)\rightarrow [\phi_2, \phi_1 )\subset  (0, \pi )$ is the strictly increasing smooth function:
\begin{gather} f(x)=\arccos\left( \frac{x r_1 \cos(\phi_1)+r_2 \cos(\phi_2)}{\sqrt{x^2 r_1^2 + r_2^2 + 2 x r_1 r_2 \cos(\phi_1-\phi_2)}} \right ), \ \ f(0)=\phi_2, \ \lim_{x \rightarrow \infty }f(x) = \phi_1.\end{gather}

% Let $z_1, z_2 \in H=\{ r \exp(\ri \pi t): r>0 \ \ \mbox{and} \ \ 0< t \leq 1 \}$.
\end{remark}
From \eqref{euler form} we see that the Euler form for the quiver $K(l)$ is $ \scal{(\alpha_1, \alpha_2),(\beta_1, \beta_2)}_{K(l)}=\alpha_1  \beta_1 + \alpha_2  \beta_2 - l \alpha_1 \beta_2. $ Hence the positive roots are
\be \label{roots for K(l)} \Delta_{l+}=\Delta_+(K(l)) = \{(n,m)\in \NN^2  \vert n^2 + m^2 - l m n \leq 1 \}\setminus \{(0,0)\}.\ee
\begin{remark} Since the root systems $\Delta(K(l))$ with $l\geq 3$ will play an important role we reserve for them the notation $\Delta_{l}=\Delta(K(l))$, respectively $\Delta_{l+}=\Delta_{+}(K(l))$.   \end{remark}
The roots with $n^2 + m^2 - l m n = 1$ are called real roots and with  $n^2 + m^2 - l m n \leq 0$ - imaginary roots. We can represent the real and the imaginary roots as follows:
\begin{gather}\label{real roots} \Delta_{l+}^{re} = \{(1,0)\} \cup \{(0,1)\} \cup \left \{\frac{n}{m} =\ \frac{1}{2}\left (l\pm \sqrt{l^2-4 + \frac{4}{m^2}} \right ) \vert n, m \in \NN_{\geq 1},  (n,m)=1\right \} \\ \label{imaginary roots} \Delta_{l+}^{im} = \left \{\frac{1}{2}\left (l- \sqrt{l^2-4 } \right ) \leq \frac{n}{m} \leq \frac{1}{2}\left (l- \sqrt{l^2-4 } \right ) \vert n\in \NN_{\geq 0}, m \in \NN_{\geq 1}\right \}.\end{gather}

\begin{lemma}\label{lemma for R of a quiver}   Let ${v}=(z_1,z_2)$, $z_i= r_i \exp(\ri \phi_i)$,  $r_i>0$,  $0 < \phi_2 < \phi_1 \leq \pi$.  Let us denote $u= f \left (\frac{1}{2}\left (l- \sqrt{l^2-4 } \right )\right) $, $v=f\left (\frac{1}{2}\left (l+ \sqrt{l^2-4 } \right ) \right )$, where $f$ is defined in Remark \ref{remark for z_1,z_2}. Then $$ R_{{v},\Delta_{l+}}=\{\pm c_j\}_{j\in\NN} \cup \pm D \cup \{\pm a_j\}_{j\in\NN}, $$ where $D$ is a dense subset in the arc  $\exp (\ri [ u , v]) \subset S^1$, $\{a_j\}_{j\in\NN}$ is a sequence with $a_0= \exp(\ri \phi_2)$ and anti-clockwise monotonically converges to $\exp(\ri u)$, $\{c_j\}_{j\in\NN}$ is a sequence with $c_0= \exp(\ri \phi_1)$ and clockwise monotonically converging to $\exp(\ri v)$.
\end{lemma}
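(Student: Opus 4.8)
The plan is to reduce the statement to elementary arithmetic of the quadratic form $n^2+m^2-lmn$, transported to the circle by the strictly increasing function $f$ of Remark~\ref{remark for z_1,z_2}.

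First I would rewrite $R_{v,\Delta_{l+}}$ in terms of slopes of roots. The only root of $K(l)$ of the form $(n,0)$ is $(1,0)$ (here using \eqref{roots for K(l)}), and for every $(n,m)\in\Delta_{l+}$ one has $nz_1+mz_2\neq 0$, since $z_1,z_2$ lie in the closed upper half plane minus the origin and $n,m\geq 0$. Hence, by Remark~\ref{remark for z_1,z_2}, $\tfrac{(v,r)}{\abs{(v,r)}}=\exp(\ri f(n/m))$ for $r=(n,m)$ with $m\geq 1$, and $\tfrac{(v,r)}{\abs{(v,r)}}=\exp(\ri\phi_1)$ for $r=(1,0)$. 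Since $f\colon[0,\infty)\to[\phi_2,\phi_1)$ is a strictly increasing homeomorphism with $f(0)=\phi_2$ and $\lim_{x\to\infty}f(x)=\phi_1$, it remains to understand the slope set $S=\{n/m\mid (n,m)\in\Delta_{l+},\ m\geq 1\}\subset[0,\infty)$, because $R_{v,\Delta_{l+}}=\pm\bigl(\{\exp(\ri\phi_1)\}\cup\{\exp(\ri f(x))\mid x\in S\}\bigr)$.

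Next I would analyse $S$. Writing $x=n/m$ one has $n^2+m^2-lmn=m^2(x^2-lx+1)$; let $q_\pm=\tfrac{1}{2}(l\pm\sqrt{l^2-4})$ be the roots of $x^2-lx+1$, so $0<q_-<1<q_+$, $q_+q_-=1$, $q_++q_-=l$, and $q_\pm$ are irrational for $l\geq 3$. The imaginary roots (those with $n^2+m^2-lmn\leq 0$) are exactly the $(n,m)$ with $x=n/m\in[q_-,q_+]$, so their slopes form $\QQ\cap(q_-,q_+)$, a dense subset of $[q_-,q_+]$. For the real roots ($n^2+m^2-lmn=1$) I would use Vieta jumping: $(1,0)$ and $(0,1)$ are real roots, the maps $T(n,m)=(ln-m,n)$ and $T'(n,m)=(m,lm-n)$ preserve the real-root locus, and I claim the real roots with slope $>q_+$ are exactly $T^{j}(1,0)$, $j\geq 0$, and those with slope $<q_-$ are exactly $(T')^{j}(0,1)$, $j\geq 0$ (there are no real roots with slope in $[q_-,q_+]$, where the form is $\leq 0$). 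The inclusion $\supseteq$, together with positivity of all entries along the two orbits, is a short induction; for $\subseteq$ one checks that a real root with $m\geq 1$ has slope $\leq l$ (otherwise $n-lm\geq 1$ forces $n^2+m^2-lmn\geq n+m^2>1$) and, when its slope exceeds $q_+$, also slope $>q_+>l-1$ (the latter valid for $l\geq 3$), hence $0\leq lm-n<m\leq n$, so $T^{-1}$ strictly decreases $\max(n,m)$ and the backward orbit must reach $(1,0)$; the case of slope $<q_-$ is symmetric via $T'$. On slopes, $T$ acts by $x\mapsto l-1/x$, which fixes $q_+$, is orientation preserving, and satisfies $l-1/x<x$ for $x>q_+$; thus the $T$-orbit slopes $\infty>l>l-1/l>\cdots$ decrease strictly to $q_+$, and dually the $T'$-orbit slopes $0<1/l<\cdots$ increase strictly to $q_-$.

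Finally I would assemble the pieces by applying $x\mapsto\exp(\ri f(x))$, with the convention $f(\infty):=\phi_1$. The imaginary-root slopes give a dense subset $D$ of $\exp(\ri f([q_-,q_+]))=\exp(\ri[u,v])$, where $u=f(q_-)$ and $v=f(q_+)$ are the quantities in the statement; the $T$-orbit gives $c_0=\exp(\ri\phi_1)$, $c_1=\exp(\ri f(l))$, $\dots$ converging clockwise and monotonically to $\exp(\ri v)$; and the $T'$-orbit gives $a_0=\exp(\ri\phi_2)=\exp(\ri f(0))$, $a_1=\exp(\ri f(1/l))$, $\dots$ converging anti-clockwise and monotonically to $\exp(\ri u)$. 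Combined with $-R_{v,\Delta_{l+}}=R_{v,\Delta_{l+}}$, this yields the asserted decomposition. I expect the real-root classification in the third paragraph to be the main obstacle: one has to exclude stray real roots and establish the monotone accumulation at $q_\mp$. The Vieta-jumping argument above does this, at the price of the two ad hoc inequalities; alternatively one could invoke that the real roots of a connected hereditary algebra are precisely the preprojective and preinjective dimension vectors, which for $K(l)$ form the two Coxeter orbits of the simple roots.
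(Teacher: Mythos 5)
Your proposal is correct, and its overall skeleton coincides with the paper's: split $\Delta_{l+}$ into real and imaginary roots, observe that for $m\geq 1$ the point $\pm\frac{nz_1+mz_2}{\abs{nz_1+mz_2}}$ is $\pm\exp(\ri f(n/m))$ with $f$ strictly increasing, get the dense arc from the imaginary-root slopes $\QQ\cap\bigl[\tfrac12(l-\sqrt{l^2-4}),\tfrac12(l+\sqrt{l^2-4})\bigr]$, and the two monotone tails from the real roots together with $(1,0)$ and $(0,1)$. The one genuine difference is how the real roots are handled: the paper simply invokes the explicit parametrization \eqref{real roots}--\eqref{imaginary roots} set up just before the lemma (real-root slopes $\tfrac12\bigl(l\pm\sqrt{l^2-4+\tfrac{4}{m^2}}\bigr)$, whose monotone convergence to $\tfrac12(l\pm\sqrt{l^2-4})$ is read off from the formula), whereas you re-derive the classification of real roots from scratch by Vieta jumping with $T(n,m)=(ln-m,n)$ and $T'(n,m)=(m,lm-n)$, including the descent argument showing every real root lies on one of the two orbits and that the slope maps $x\mapsto l-1/x$, $x\mapsto 1/(l-x)$ force monotone accumulation at the fixed points. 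Your route is more self-contained (it does not rely on the displayed root descriptions, which ultimately rest on \eqref{roots by euler} and Kac's theorem) and makes the infinitude and monotonicity of the two sequences $\{a_j\}$, $\{c_j\}$ completely explicit; the paper's route is shorter given that \eqref{real roots} and \eqref{imaginary roots} are already on the table. Your descent inequalities (slope $\leq l$, and $q_+>l-1$ for $l\geq 3$) check out, so the extra work is sound.
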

\bpr Now $R_{{v},\Delta_{l+}}=\left \{ \pm \frac{n z_1 + m z_2}{\abs{n z_1 + m z_2}} \vert (n,m) \in \Delta_{l+} \right \}$. We have a disjoint union
$\Delta_{l+}=\Delta_{l+}^{re}\cup \Delta_{l+}^{im}$, where $\Delta_{l+}^{re}$, $\Delta_{l+}^{im}$ are taken from \eqref{real roots}, \eqref{imaginary roots}.
Recall also that if $m\geq 1$ then $\frac{n z_1 + m z_2}{\abs{n z_1 + m z_2}}=\exp(\ri f\left ( n/m\right ) )$ (see Remark \ref{remark for z_1,z_2}).
Therefore we can write for $R_{{v},\Delta_{l+}}$:
\begin{gather}\left \{ \pm \frac{n z_1 + m z_2}{\abs{n z_1 + m z_2}} \vert (n,m) \in \Delta_{l+}^{re} \right \} \cup \left \{ \pm \frac{n z_1 + m z_2}{\abs{n z_1 + m z_2}}
 \vert (n,m) \in \Delta_{l+}^{im} \right \}= \nonumber \\
\{\pm\exp(\ri \phi_1)\} \cup \{\pm\exp(\ri \phi_2)\} \cup \left \{\pm\exp \left ( \ri f\left ( \frac{1}{2}\left (l\pm \sqrt{l^2-4 + \frac{4}{m^2}} \right ) \right )
 \right )\vert m\in \NN_{\geq 1} \right \} \nonumber \\
\cup \left \{\pm\exp \left ( \ri f\left (n/m \right ) \right )\vert n/m\in [u,v] \right \}.\nonumber \end{gather}
Now the lemma follows from the properties of $f$ given in Remark \ref{remark for z_1,z_2} and the fact that $\QQ \cap [u,v]$ is dense in $[u,v]$.
 \epr
  \subsubsection{Stability conditions \texorpdfstring{$\sigma$}{\space} on \texorpdfstring{$K(l)$}{\space} with
  \texorpdfstring{$P_\sigma=R_{{v},\Delta_{l+}}$}{\space}} \label{subsection for Kronecker}
In this subsection $l\geq 3$ is fixed and $Q=K(l)$, $\Delta_{l+}$ is the positive root system of $K(l)$, $\mc A=Rep_\CC(Q)$, $\mc T= D^b(Rep_\CC(Q))$.  \vspace{3mm}

 For a representation $X= \begin{diagram}[w=3em]
\CC^n & \upperarrow{}
\lift{-2}{  \vdots }
\lowerarrow{} & \CC^m
\end{diagram} \in {\mc A}$ we write $\ul{\dim}(X)=(n,m)$, $\ul{\dim}_1(X)=n$,\vspace{3mm}

 $\ul{\dim}_2(X)=m$. The simple objects of the standard t-structure ${\mc A}=Rep_\CC(Q)\subset D^b(Rep_\CC(Q))$ are: \vspace{2mm}
\begin{gather} \label{s_1 s_2 kron} s_1 = \begin{diagram}[w=3em]
\CC & \upperarrow{}
\lift{-2}{  \vdots }
\lowerarrow{} & 0
\end{diagram}, \ \ \   s_2 = \begin{diagram}[w=3em]
0 & \upperarrow{}
\lift{-2}{  \vdots }
\lowerarrow{} & \CC.
\end{diagram} \end{gather} \vspace{1mm}
To $\mc A \subset \mc T$ we can apply Remark \ref{HN property and locally finiteness} and then we have $\HH^{\mc A}\subset \st(\mc T)$ and
bijection $\HH^{\mc A} \ni (\mc P, Z) \mapsto (Z(s_1),Z(s_2)) \in \HH^2$.

 For any  $ (\mc P, Z) \in \HH^{\mc A} $ , $t\in (0,1]$     $\mc P(t)$ consists of the objects in ${\mc A}$  satisfying the condition in Definition
 \ref{Z-semistable}. If we denote  ${v}=(Z(s_1),Z(s_2)) \in \HH^2$, then  by $Z(X)=({v},\ul{\dim}(X))$:
\begin{gather} X \in {\mc P}(t), t \in (0,1]  \iff  \nonumber \\[-2mm] \label{sigma ss kron} \\[-2mm] \mbox{for any}  \  {\mc A}\mbox{-monic } \ X' \rightarrow X \
 \  \quad \arg({v},\ul{\dim}(X'))\leq  \arg({v},\ul{\dim}(X))= \pi t. \nonumber \end{gather}

 \begin{lemma} \label{density for kronecker0}  Let $ \sigma=(\mc P, Z) \in \HH^{\mc A} $ and $\arg(Z(s_1))>\arg(Z(s_2))$.
  Let $(n,m)\in \Delta_{l+}$ be a Schur root.  Then $\frac{n z_1 +m z_2}{\abs{n z_1 +m z_2}} \in P_{ \sigma }$, where $z_i=Z(s_i)$, $i=1,2$.
 \end{lemma}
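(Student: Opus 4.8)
The plan is to translate $\sigma$-semistability of the relevant phase into King's notion of $\theta$-stability, and then extract the desired semistable object from Proposition \ref{prop king}. The structural fact that makes this possible is that $K(l)$ has only two vertices, so a linear functional on $K_0(\mc A)\cong\ZZ^2$ that vanishes on a prescribed class $(n,m)$ is determined up to scaling; the sign will be pinned down by evaluating on $s_2$. Write $z_i=Z(s_i)=r_i\exp(\ri\phi_i)$ with $r_i>0$ and, by hypothesis, $0<\phi_2<\phi_1\le\pi$. First I would dispose of the degenerate cases: if $nm=0$ then by \eqref{roots for K(l)} one has $(n,m)\in\{(1,0),(0,1)\}$, and the corresponding simple module ($s_1$ or $s_2$ from \eqref{s_1 s_2 kron}) is $Z$-semistable, since any $\mc A$-monic into a simple object is $0$ or an isomorphism, so Definition \ref{Z-semistable} holds trivially; as $\frac{nz_1+mz_2}{\abs{nz_1+mz_2}}$ then equals $z_1/\abs{z_1}$ resp.\ $z_2/\abs{z_2}$, the claim follows. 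So from now on assume $n,m\ge 1$.

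Next I would set up the candidate phase and functional. Define $t_0$ by $\exp(\ri\pi t_0)=\frac{nz_1+mz_2}{\abs{nz_1+mz_2}}$. By Remark \ref{remark for z_1,z_2} applied with $x=n/m\in(0,\infty)$ we have $\phi_2<\pi t_0<\phi_1\le\pi$, so $t_0\in(0,1)$. Put $\theta_0=-\mathrm{Im}\big(\exp(-\ri\pi t_0)Z\big)\colon K_0(\mc A)\to\RR$, the functional attached to the phase $t_0$ by the Remark following Definition \ref{theta stability}. Using \eqref{Z(X) for quivers}, the value of $Z$ on the class $(n,m)$ is $nz_1+mz_2=\abs{nz_1+mz_2}\exp(\ri\pi t_0)$, hence $\theta_0(n,m)=0$; moreover $\theta_0(s_2)=-\mathrm{Im}(\exp(-\ri\pi t_0)z_2)=r_2\sin(\pi t_0-\phi_2)>0$ because $0<\pi t_0-\phi_2<\phi_1-\phi_2<\pi$. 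In particular $\theta_0\ne 0$.

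Then I would bring in King. Since $(n,m)$ is a Schur root, Proposition \ref{prop king} produces a module $X\in\mc A$ with $[X]=(n,m)$ together with a nontrivial $\theta\colon K_0(\mc A)\to\RR$ for which $X$ is $\theta$-stable. By Definition \ref{theta stability} this forces $\theta(n,m)=\theta([X])=0$; and since the space of $X$ at the sink is nonzero ($m\ge 1$) while $X$ is not simple ($n\ge 1$), the submodule given by a line in that space is a proper nonzero subrepresentation isomorphic to $s_2$, so $\theta$-stability gives $\theta(s_2)>0$. Now $\theta_0$ and $\theta$ are nonzero functionals on $K_0(\mc A)\otimes\RR\cong\RR^2$ both vanishing on the line $\RR\cdot(n,m)$, hence proportional; as $\theta_0(s_2)$ and $\theta(s_2)$ are both positive, the proportionality constant is positive, so $\theta$-semistability and $\theta_0$-semistability coincide. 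Therefore $X$ is $\theta_0$-semistable, i.e.\ $Z$-semistable of phase $t_0$ by the Remark after Definition \ref{theta stability}; since $t_0\in(0,1)$ this means $X\in\mc P(t_0)\setminus\{0\}$, whence $\frac{nz_1+mz_2}{\abs{nz_1+mz_2}}=\exp(\ri\pi t_0)\in P_\sigma$, as required.

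The only genuinely non-formal step is the identification of the a priori unknown $\theta$ supplied by King's theorem with the explicit $\theta_0$ built from $Z$, and I expect this to be the crux: it uses essentially that $K(l)$ has rank-$2$ Grothendieck group, with the sign ambiguity resolved by the positivity of $\theta_0(s_2)$. The remaining points — matching the monic-subobject conditions of Definitions \ref{Z-semistable} and \ref{theta stability} with the subrepresentation lattice of $Rep_\CC(K(l))$, and the computation of $t_0$ — are routine, the former being precisely the content of the Remark after Definition \ref{theta stability} which may be assumed.
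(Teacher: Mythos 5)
Your argument is correct and follows essentially the paper's own route: both proofs reduce to $n,m\ge 1$, invoke King's Proposition \ref{prop king} for the Schur root $(n,m)$, and pin down the sign of the resulting $\theta$ via the sink subobject $s_2\hookrightarrow X$. The only difference is in the final comparison step: you transfer $\theta$-semistability to $\sigma$-semistability by observing that $\theta$ and $\theta_0=-\mathrm{Im}(e^{-\ri\pi t_0}Z)$ both annihilate $(n,m)$ in the rank-two Grothendieck group and are positively proportional, whereas the paper rewrites both stability conditions as slope inequalities via the monotone function $f$ of Remark \ref{remark for z_1,z_2}; both versions are complete.
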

 \bpr So, let $(n,m)\in \Delta_{l+}$ be a Schur root. We  show that  there exists a  $\sigma$-semistable $X$ with $\ul{\dim}(X)=(n,m)$. Then the lemma
 follows  because $X\in \mc P(t)\neq \{0\}$ for some $t\in (0,1]$ and by the formula $ m(X) \exp(\ri \pi t)=Z(X)=n z_1 + m z_2 $.

 If $m=0$, then $n=1$ (recall \eqref{roots for K(l)})  and then $X=s_1$ is the semistable, which we need (it is even stable in $\sigma$, since it is a simple object
  in $\mc A$). Hence we can assume that $m\geq 1$.

Denote $\arg(z_i)=\phi_i$, $i=1,2$,  ${v}=(z_1,z_2)$. Then  $0 < \phi_2 < \phi_1 \leq \pi$. By \eqref{arg(alpha...)}
for any $X$ with $\ul{\dim}(X)=(n,m)$ we have  $\arg({v},\ul{\dim}(X) )=\arg(n z_1 + m z_2)=f(n/m)$. Then by \eqref{sigma ss kron} such a $X$
is semi-stable in $\sigma$ iff  any ${\mc A}$-monic $X'\rightarrow X$   satisfies
$$ \arg( \ul{\dim}_1(X') z_1 + \ul{\dim}_2(X') z_2) \leq f\left (\frac{n}{m}\right). $$ Recall that $f(n/m)<\phi_1$ (see Remark \eqref{remark for z_1,z_2}).
From the last inequality  we get $\ul{\dim}_2(X')\neq 0$ and then by \eqref{arg(alpha...)} this inequality  can be rewritten as
$f(\ul{\dim}_1(X')/\ul{\dim}_2(X')) \leq  f\left (n/m\right)$.

 So, we see that $X \in {\mc A}$ with $\ul{\dim}(X)=(n,m)$ is  $\sigma$-semistable   iff
  any $\mc A$-monic arrow $X'\rightarrow X$ satisfies: \begin{gather} \label{semistable by n,m}  \ul{\dim}_2(X')\neq 0, \quad \frac{\ul{\dim}_1(X')}{\ul{\dim}_2(X')} \leq  \frac{n}{m}. \end{gather}
Now since $(n,m)$  is a Schur root then by Proposition \ref{prop king} there exists $X\in {\mc A}$ with $\ul{\dim}(X)=(n,m)$ and a non-zero
$\theta:K_0({\mc A}) \rightarrow \RR $, s. t. $X$ is $\theta$-semistable (see definition \ref{theta stability}).   We will
show that this $X$  is the $\sigma$-semistable, which we need.

By $\theta$-semistability of $X$  we have $\theta(1,0) n +
\theta(0,1) m = 0$.  By $m\neq 0$ we have a monic
map\footnote{Since the  vertex corresponding  to $s_2$ is a sink,
then $s_2$ is  a subobject of any $X\in Rep_\CC(K(l))$ with
$\ul{\dim}_2(X)\neq 0$.} $s_2 \rightarrow X$ and then again by
$\theta$-semistability  $\theta(0,1)\geq 0$, which together with
$\theta(1,0) n + \theta(0,1) m = 0$, $\theta \neq 0$ implies
   \ben \theta(1,0)<0, \ \  \theta(0,1)>0, \ \ \theta(1,0) \frac{n}{m} + \theta(0,1)  = 0.  \een Let us take now any monic arrow
   $X'\rightarrow X$ in ${\mc A}$ with $X'\neq 0$. By $\theta$-semistability $0\leq \theta(X')=\theta(1,0) \ul{\dim}_1(X')+ \theta(0,1) \ul{\dim}_2(X')$.
   Hence by $\theta(1,0)<0$ we obtain   $\ul{\dim}_2(X')\neq 0$. Therefore we can write
   \ben \theta(1,0)\frac{ \ul{\dim}_1(X')}{\ul{\dim}_2(X')}+ \theta(0,1) \geq 0 = \theta(1,0) \frac{n}{m} + \theta(0,1).  \een
By $\theta(1,0)<0$ it follows    $\frac{ \ul{\dim}_1(X')}{\ul{\dim}_2(X')}\leq \frac{n}{m}$. Whence, we verified \eqref{semistable by n,m} and the lemma follows.
  \epr
 \begin{coro} \label{density for kronecker} Let $ \sigma=(\mc P, Z) \in \HH^{Rep_\CC(K(l))} \subset \st D^b(Rep_\CC(K(l)))$ and $\arg(Z(s_1))>\arg(Z(s_2))$.
  Then $ P_{ \sigma }=R_{{v},\Delta_{l+}} $, where ${v}=(Z(s_1),Z(s_2))$.
 % and $R_{{v},\Delta_{l+}}$ is described in Lemma \ref{lemma for R of a quiver}.
 \end{coro}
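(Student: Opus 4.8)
The plan is to prove the two inclusions separately; one of them is already in hand. The inclusion $P_\sigma\subseteq R_{v,\Delta_{l+}}$ is immediate from Corollary \ref{coro P_sigma subset 2} applied to the acyclic quiver $Q=K(l)$, since $\Delta_+(K(l))=\Delta_{l+}$. All the content is therefore in the reverse inclusion $R_{v,\Delta_{l+}}\subseteq P_\sigma$.

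For the reverse inclusion, since $-P_\sigma=P_\sigma$ by Definition \ref{P_sigma} and $R_{v,\Delta_{l+}}$ is symmetric under $w\mapsto -w$ by \eqref{R_v,A}, it suffices to show $\frac{(v,r)}{\abs{(v,r)}}\in P_\sigma$ for every $r=(n,m)\in\Delta_{l+}$ with $(v,r)\neq 0$ (and in our situation $(v,r)=nZ(s_1)+mZ(s_2)\neq 0$ automatically, as $Z(s_1),Z(s_2)\in\HH$). My first move is to reduce to \emph{indivisible} roots: writing $d=\gcd(n,m)$ and $r_0=r/d$, one has $\scal{r_0,r_0}_{K(l)}=d^{-2}\scal{r,r}_{K(l)}\leq 1$, hence $r_0\in\Delta_{l+}$ by \eqref{roots for K(l)}, while $(v,r)=d\,(v,r_0)$ shows that $r$ and $r_0$ determine the same point of $S^1$. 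So we may assume $r$ is indivisible.

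The main step is then the claim that \emph{every indivisible positive root of $K(l)$ is a Schur root}. Granting this, Lemma \ref{density for kronecker0} applies --- its hypothesis $\arg(Z(s_1))>\arg(Z(s_2))$ is exactly ours --- and yields $\frac{(v,r)}{\abs{(v,r)}}\in P_\sigma$, finishing the proof. To prove the claim: for $r\in\{(1,0),(0,1)\}$ it is trivial, as $s_1,s_2$ are simple. If $r$ is a real root with $n,m\geq 1$, the unique indecomposable $X$ with $\ul{\dim}(X)=r$ is rigid (cf.\ \cite{Kac}), and an indecomposable rigid module over the hereditary algebra $\CC K(l)$ is a brick, so $\mathrm{End}(X)=\CC$ and $r$ is a Schur root. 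If $r$ is imaginary it is then indivisible and imaginary; such a root is a Schur root, which for $K(l)$ one may also verify directly, since a generic $l$-tuple of linear maps $\CC^n\to\CC^m$ with $(n,m)$ an indivisible imaginary root has only scalar endomorphisms.

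I expect the last ingredient --- that an indivisible imaginary root of $K(l)$ is a Schur root --- to be the only non-formal point, and hence the main obstacle; it can be settled either by invoking the theory of general representations of quivers or by the direct endomorphism computation indicated above, while everything else is bookkeeping. As a consistency check, under the explicit description in Lemma \ref{lemma for R of a quiver} the real roots $(1,0),(0,1)$ and those with $n^2+m^2-lnm=1$ account for the two endpoint phases and the monotone sequences $\{a_j\},\{c_j\}$, whereas the imaginary roots, whose ratios $n/m$ run over the rationals in the relevant interval, account for the dense subset $D$ of the arc --- but this picture is not logically needed for the equality $P_\sigma=R_{v,\Delta_{l+}}$ itself.
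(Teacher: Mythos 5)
Your proposal follows essentially the paper's own route: both inclusions rest on Corollary \ref{coro P_sigma subset 2} and Lemma \ref{density for kronecker0}, and the entire content is the fact that the relevant positive roots of $K(l)$ are Schur, which the paper settles by quoting Kac (imaginary roots) and Ringel (real roots) -- a blanket statement that also makes your reduction to indivisible roots unnecessary, though that reduction is harmless and correct. The one caveat is your treatment of real roots: Kac gives uniqueness of the indecomposable, not rigidity (and for general wild quivers a real root need not be Schur), so some $K(l)$-specific input is genuinely needed there -- namely that these indecomposables are preprojective or preinjective and hence exceptional bricks, which is exactly what the paper's citation of Ringel supplies.
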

 \bpr The good luck is that  all elements of $\Delta_{l+}$ are Schur roots. Indeed \cite[Theorem 4 a)]{Kac} says that $\Delta_{l+}^{im}$ are Schur roots.
 In \cite{Ringel} one can read that the indecomposable representations of $Q$ with dimension vectors real roots are  also Schur.
 Whence, we see that any $(n,m) \in \Delta_{l+}$ is a Schur root and we can apply the previous lemma to it. The corollary follows.
 \epr

\begin{remark} Recently it was noted in   \cite{GLMMN} that there is a connection between    \cite{Weist1}, \cite{Weist2}, \cite{Reineke}  and the  density in an arc for the  Kronecker quiver.

\end{remark}

\subsection{Kronecker pairs} \label{kronecker pairs}
In this subsection we generalize  Corollary  \ref{density for kronecker}.
The most general statement is Theorem \ref{coro for kronecker pairs 0}, but we use   further only its  Corollary   \ref{coro for kronecker pairs 3}
(corollary \ref{coro for kronecker pairs 2} is  intermediate). The first step  is: \footnote{It is motivated by the Bondal's result in  \cite{Bondal} for equivalence between  triangulated category generated by a strong exceptional
  collection and the derived category of modules over an algebra of homomorphisms of this collection  and by a note on this equivalence
  in \cite{Macri}. Observe however that we do not have restriction on  $(E_1,E_2)$ to be a strong pair  and we construct equivlanece between $t$-structures.}

\begin{lemma} \label{from pair to kronecker quiver} Let $\mc T$ be a $k$-linear triangulated category, where $k$ is any field. Let $(E_1,E_2)$ be a full exceptional
 pair, s. t. $\Hom^{\leq 0}(E_1,E_2)=0$,  $0<\dim_k(\Hom^{1}(E_1,E_2))=l<\infty$. Let $\mc A$ be the extension closure of $(E_1,E_2)$  in $\mc T$.

Then $\mc A$ is a heart of a bounded t-structure in $\mc T$ and there exists an   equivalence of abelian categories:
$F: \mc A \rightarrow Rep_k(K(l))$, s. t. $F(E_1)=s_1$, $F(E_2)=s_2$ \ ($s_1$, $s_2$ are as in \eqref{s_1 s_2 kron}).
\end{lemma}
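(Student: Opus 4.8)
The plan is to build the equivalence $F$ explicitly from the exceptional pair, mimicking Bondal's tilting argument but working with hearts of $t$-structures rather than with derived equivalences. First I would check that $\mc A$, the extension closure of $\{E_1,E_2\}$, is the heart of a bounded $t$-structure. Since $(E_1,E_2)$ is a full exceptional pair, every object of $\mc T$ is obtained from $E_1,E_2$ by shifts and cones; the hypothesis $\Hom^{\le 0}(E_1,E_2)=0$ together with exceptionality ($\Hom^{\bullet}(E_i,E_i)=k$ in degree $0$) says exactly that $(E_2,E_1)$ — in the appropriate order — forms what one might call a ``strong'' generating pair in the sense needed for the standard criterion: the full subcategory $\mc A$ generated under extensions is closed under the relevant truncations, the shifts $\mc A[n]$ tile $\mc T$, and $\Hom(\mc A,\mc A[n])=0$ for $n<0$. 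I would verify the latter $\Hom$-vanishing by dévissage: it suffices to check it on the generators $E_1,E_2$, where it reduces to $\Hom^{<0}(E_i,E_j)=0$, which holds by exceptionality and the hypothesis on the pair. This gives that $\mc A$ is a bounded $t$-structure heart, and it is moreover of finite length with simple objects $E_1$ and $E_2$ (no proper sub/quotient objects inside $\mc A$ because any nonzero map $E_i\to E_j$ of degree $0$ is either an isomorphism or zero).

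Next I would construct $F$. The natural candidate is the functor represented by the generator $G=E_1\oplus E_2$: on $\mc A$, send $X$ to the graded vector space $\Hom^{\bullet}(G,X)$, equipped with the module structure over $\Lambda:=\Hom^{\bullet}_{\mc T}(G,G)$. The key computation is $\Lambda\cong kK(l)$ as a graded algebra: the degree-$0$ part is $k\times k$ (the two idempotents $e_1,e_2$ attached to $E_1,E_2$) by exceptionality and $\Hom^{\le 0}(E_1,E_2)=0$, and the degree-$1$ part is $\Hom^1(E_1,E_2)$, which is $l$-dimensional and gives exactly the $l$ arrows; there are no arrows the other way since $\Hom^1(E_2,E_1)$ does not enter (it lies in a degree that $t$-structure considerations force to vanish on $\mc A$ — this I would need to pin down carefully, using that $(E_1,E_2)$ is exceptional so $\Hom^{\bullet}(E_2,E_1)=0$ entirely). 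Hence $\Hom^\bullet(G,-)$ lands in graded $kK(l)$-modules; restricting to $X\in\mc A$ I would argue the result is concentrated in degree $0$ (again by dévissage: true for $X=E_1,E_2$, hence for all extensions, using the long exact sequence), so $F(X)=\Hom^0(G,X)\oplus\Hom^1(G,X)$ is an honest $K(l)$-representation. By construction $F(E_1)=s_1$ and $F(E_2)=s_2$.

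Finally I would prove $F$ is an equivalence. It is exact (sends short exact sequences in $\mc A$ to short exact sequences of representations, by the long exact $\Hom$-sequence and the degree-concentration just established). It is essentially surjective and fully faithful by a standard argument: both $\mc A$ and $Rep_k(K(l))$ are finite-length abelian categories, $F$ sends the simples $E_1,E_2$ to the simples $s_1,s_2$, and $F$ induces isomorphisms $\Ext^i_{\mc A}(E_a,E_b)\xrightarrow{\sim}\Ext^i_{Rep}(s_a,s_b)$ for $i=0,1$ — the $i=0$ case is exceptionality on both sides, and the $i=1$ case matches $\Hom^1_{\mc T}(E_1,E_2)$ ($l$-dimensional) with the $l$ arrows of $K(l)$, all other $\Ext^1$'s vanishing on both sides. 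An induction on length (using the five lemma on $\Ext$-sequences) then upgrades this to full faithfulness on all of $\mc A$, and essential surjectivity follows since every $K(l)$-representation is an iterated extension of $s_1,s_2$ and $F$ is exact with the simples in its essential image.

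\emph{Main obstacle.} The delicate point is the degree bookkeeping: showing that $\Hom^{\bullet}_{\mc T}(G,X)$ is concentrated in degrees $0$ and $1$ for $X\in\mc A$, with the degree-$1$ part carrying exactly the arrow data and no ``backward'' arrows or higher relations, so that the target really is $Rep_k(K(l))$ and not the module category over some larger dg-algebra. This rests on $\Hom^{\bullet}(E_2,E_1)=0$ (from the exceptional pair structure) and on $\Hom^{\le 0}(E_1,E_2)=0$, $\dim\Hom^1(E_1,E_2)=l$; getting the vanishing $\Hom^{\ge 2}(E_1,E_2)$ restricted to $\mc A$ — equivalently, that $\mc A$ has homological dimension $1$ — is where I expect the real work, and it should follow from the fact that $Rep_k(K(l))$ is hereditary once the equivalence on $\Ext^{\le 1}$ is in place, closing the loop.
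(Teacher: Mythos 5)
Your first half (that $\mc A$ is a heart of a bounded t-structure, of finite length with simples $E_1,E_2$) is the same standard fact the paper simply cites (Collins--Polishchuk, Rickard--Rouquier, Keller--Nicol\'as), and your sketch of it is acceptable. The construction of $F$, however, contains a genuine gap. With $G=E_1\oplus E_2$, the graded space $\Hom^{\bullet}(G,E_2)$ is \emph{not} concentrated in degree $0$: its $E_1$-component contains $\Hom^{1}(E_1,E_2)\cong k^{l}\neq 0$, so the base case of your d\'evissage already fails, and the resulting module has dimension vector $(l,1)$ rather than $(0,1)$; in particular $F(E_2)\neq s_2$. Moreover $\Hom^{\bullet}(G,-)$ is not exact on $\mc A$: for a non-split extension $0\to E_2\to X\to E_1\to 0$ the connecting map $\Hom^{0}(G,E_1)\to\Hom^{1}(G,E_2)$ is nonzero (it records the extension class), so short exact sequences are not preserved. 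The underlying reason is that $E_1\oplus E_2$ is not a projective generator of $\mc A$, since $\Ext^{1}_{\mc A}(E_1,E_2)=\Hom^{1}(E_1,E_2)\neq 0$. Finally, your ``closing the loop'' remark is circular, and note that the lemma does \emph{not} assume $\Hom^{\geq 2}(E_1,E_2)=0$; any argument that needs the full graded algebra $\Hom^{\bullet}(G,G)$ to be $kK(l)$ is working under an unstated extra hypothesis.

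The paper's proof avoids all of this by never forming $\Hom^{\bullet}(G,-)$: from the semiorthogonal decomposition $\mc T=\left\langle\left\langle E_1\right\rangle,\left\langle E_2\right\rangle\right\rangle$ it gets exact functors $\lambda_i^{\mc A}\colon\mc A\to\mc A_i=\mathrm{add}(E_i)$ and a functorial triangle $\lambda_2(X)\to X\to\lambda_1(X)\to\lambda_2(X)[1]$, and defines $F(X)$ from the matrices of the connecting map $w_X\colon\lambda_1(X)\to\lambda_2(X)[1]$ in a chosen basis of $\Hom^{1}(E_1,E_2)$; only the degree-one component ever enters, so possible higher $\Hom$'s are irrelevant, and exactness of $F$ follows from exactness of the $\lambda_i^{\mc A}$. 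If you want to keep a module-theoretic route, replace $G$ by a projective generator of $\mc A$: $E_2$ is projective in $\mc A$, the universal extension $0\to\Hom^{1}(E_1,E_2)^{*}\otimes E_2\to P_1\to E_1\to 0$ yields the other indecomposable projective, one checks $\mathrm{End}_{\mc A}(P_1\oplus E_2)\cong kK(l)$, and $\Hom_{\mc A}(P_1\oplus E_2,-)$ is then an exact equivalence sending $E_1\mapsto s_1$, $E_2\mapsto s_2$. As written, though, your functor neither lands where claimed nor sends $E_2$ to $s_2$, so the core of the proof does not go through.
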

\bpr In \cite[p. 6]{collinsP} or \cite[section 3]{rickard} it is shown that by  $\hom^{\leq 0}(E_1,E_2)=0$ and\footnote{If $S$ is a
subset of objects in a triangulated category $\mc T$ we denote by
$\left\langle S \right \rangle $ the triangulated subcategory
generated by $S$.}   $\mc T = \left\langle E_1 ,  E_2 \right\rangle$ it follows that  $\mc A$ is a heart of a bounded t-structure of $\mc T$ (see also \cite[section 8]{KellerAndPedro}). In particular $\mc A$ is an abelian category.

 Let $DT(\mc T)$ be the category of distinguished triangles
in $\mc T$ (objects are  the distinguished triangles and morphisms
are  triple of arrows  between triangles making commutative the
corresponding diagram).  Using  the semi orthogonal decomposition   $\mc T = \left\langle \left\langle E_1
\right\rangle, \left\langle E_2 \right\rangle \right\rangle$ one
can construct three functors: \be G: \mc T \rightarrow DT(\mc T),
\ \  \lambda_1 : \mc T \rightarrow \left \langle  E_1 \right
\rangle, \ \  \lambda_2 : \mc T \rightarrow \left \langle  E_2
\right \rangle \ee s. t.
  the triangle $G(X) \in DT(\mc T)$ for any $X \in \mc T$ is:
  \begin{gather} \label{G(X)}G(X) \  =  \  \begin{diagram} \lambda_2(X) & \rTo^{u_X} & X  & \rTo^{v_X} &  \lambda_1(X) & \rTo^{w_X}  & \lambda_2(X)[1] \end{diagram}
   , \ \  \lambda_1(X) \in \left\langle E_1 \right\rangle ,  \lambda_2(X)  \in \left\langle E_2 \right\rangle. \end{gather}   It is well known  that
     $\lambda_2$  is right adjoint to the embedding functor $\left \langle E_{2} \right \rangle \rightarrow \mc T$ and $\lambda_1$ is left adjoint to
      $\left \langle E_{1} \right \rangle \rightarrow \mc T$ (see for example \cite[p. 279]{GM}). The adjoint functor (left or right) to an  exact functor
      is also an exact functor (\cite[Proposition 1.4]{BK}). Therefore $\lambda_1$ and $\lambda_2$ are exact functors.
   If we restrict $\lambda_1$, $\lambda_2$ to $\mc A$ then we
  obtain exact functors between abelian categories
  \be \nonumber \lambda_i^{\mc A}: \mc A \rightarrow \mc A_i  \qquad i=1,2, \ee where $\mc A_i\cong k\mbox{-}Vect$ is  the additive closure of $E_i$.
   %These functors    are exact   functors between abelian categories
   %\footnote{This holds because for any t-structure $\mc A$ in a triangulated category $\mc T$
   % the short exact sequences in the abelian category $\mc A$ are exactly these sequences
 % $\bd A & \rTo ^{\alpha}& B & \rTo^{\beta} & C \ed $ with $A,B,C \in \mc \mc A$, s. t. for some $\gamma : C \rightarrow A[1]$
%   $\bd A & \rTo ^{\alpha}& B & \rTo^{\beta} & C & \rTo^{\gamma} A[1]\ed $ is a distinguished triangle in $\mc T$.}
 % by the exactness of $\lambda_1$
% and $\lambda_2$.

We define the functor $F: \mc A \rightarrow Rep_k(K(l))$ as follows. First choose a basis of $\Hom^1(E_1,E_2)$ and a decomposition of any
$Y \in \mc A_i$ into $\dim(\Hom(E_i,Y))$ number of copies of $E_i$, $i=1,2$.
Take any $X\in \mc A$, then we get a  distinguished triangle $G(X)$ as in \eqref{G(X)} with $\lambda_i(X)=\lambda_i^{\mc A}(X)$,
in particular we get an arrow  $\bd \lambda_1^{\mc A}(X) & \rTo^{w_X}  & \lambda_2^{\mc A}(X)[1] \ed $. This arrow, using the chosen decompositions
and the basis of $\Hom^1(E_1,E_2)$, can be expressed  by  $l$   $a_2\times a_1$ matrices over $k$, where  $a_i = \dim(E_i,\lambda_i(X))$, $i=1,2$.
In particular these $l$ matrices are a representation of $K(l)$ with dimension vector $(a_1,a_2)$ and we define   $F(X)$ to be this representation.

 Let $f:X \rightarrow Y $ be an arrow in $\mc A$ then, as far as  $G: \mc T \rightarrow DT(\mc T)$ is a functor,  $G(f)$ is a morphism of triangles,
 hence the diagram:  $ \bd[size=1.5em] \lambda_1^{\mc A}(X) & \rTo^{w_X}  & \lambda_2^{\mc A}(X)[1] \\
                     \dTo^{\lambda_1^{\mc A}(f)} &   &   \dTo^{\lambda_2^{\mc A}(f)[1]}  \\
\lambda_1^{\mc A}(Y) & \rTo^{w_Y}  & \lambda_2^{\mc A}(Y)[1]  \ed $    is commutative.
Let $M_1$, $M_2$ be the matrices of  $\lambda_1^{\mc A}(f)$, $\lambda_2^{\mc A}(f)$. The commutativity of the diagram above  implies
that $(M_1,M_2):F(X) \rightarrow F(Y)$ is an arrow in $Rep_k(K(l))$ and  our definition of $F(f)$ is  $F(f)=(M_1,M_2)$.

  By the exactness of $\lambda_i^{\mc A}$, $i=1,2$ it follows that  $F$ is an exact functor between abelian categories. Now, by straightforward  computations one can show that $F$ is  an equivalence.\epr
This lemma prompts the following definition
\begin{df} \label{defKroneckeppair} A pair of objects $(E_1,E_2)$ in a  $k$-linear ($k$ is any field)  triangulated category   $\mc T$ is called \ul{Kronecker pair} if:
\begin{itemize}
    \item  $(E_1,E_2)$ is an exceptional pair
    \item  $\Hom^{\leq 0}(E_1,E_2)=0$
    \item  $3 \leq \dim_k(\Hom^{1}(E_1,E_2)) < \infty $.
\end{itemize}
\end{df}

\begin{coro} \label{coro for kronecker pairs 1} Let $(E_1,E_2)$ be  a  Kronecker pair in a  $\CC$-linear triangulated category   $\mc D$.
Denote $l=\dim(\Hom^1(E_1,E_2))$, $\mc T=\left \langle E_1, E_2 \right \rangle \subset \mc D$ and $\mc A$ - the extension closure of $(E_1,E_2)$.

 Then any $ \sigma=(\mc P, Z) \in \HH^{\mc A} \subset \st D^b(\mc T)$ with $\arg(Z(E_1))>\arg(Z(E_2))$ satisfies $P_{ \sigma }=R_{{v},\Delta_{l+}} $,
 where ${v}=(Z(E_1),Z(E_2))$. In particular  $ P_{ \sigma }$ is dense in an arc of non-zero length.
\end{coro}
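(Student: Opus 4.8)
The plan is to reduce the statement to Corollary~\ref{density for kronecker} by means of the dictionary furnished by Lemma~\ref{from pair to kronecker quiver}. First I would apply that lemma to obtain the equivalence of abelian categories $F:\mc A\to Rep_\CC(K(l))$ with $F(E_1)=s_1$ and $F(E_2)=s_2$, together with the fact that $\mc A$ is the heart of a bounded $t$-structure on $\mc T=\langle E_1,E_2\rangle$. Since $Rep_\CC(K(l))$ is hereditary, so is $\mc A$, and as $\mc T$ is generated by $\mc A$ the realization functor $D^b(\mc A)\to\mc T$ is an equivalence; composing its inverse with $D^b(F)$ yields a triangulated equivalence $\Psi:\mc T\xrightarrow{\sim}D^b(Rep_\CC(K(l)))$ which carries $\mc A$ onto the standard heart and $E_i$ onto $s_i$. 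In particular $\mc T\simeq D^b(\mc A)$, so that $\st(\mc T)$ is meaningful and is identified by $\Psi$ with $\st(D^b(Rep_\CC(K(l))))$.

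Next I would transport the stability condition along $\Psi$. Under the identification $\st(\mc T)\simeq\st(D^b(Rep_\CC(K(l))))$ the subfamily $\HH^{\mc A}$ corresponds to $\HH^{Rep_\CC(K(l))}$, because by Remark~\ref{HN property and locally finiteness} both families are parametrized by the values of the central charge on the two simple objects, and $\Psi$ matches these. Hence $\sigma=(\mc P,Z)\in\HH^{\mc A}$ goes to some $\sigma'=(\mc P',Z')\in\HH^{Rep_\CC(K(l))}$ with $(Z'(s_1),Z'(s_2))=(Z(E_1),Z(E_2))=v$, and the hypothesis $\arg(Z(E_1))>\arg(Z(E_2))$ becomes $\arg(Z'(s_1))>\arg(Z'(s_2))$. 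An equivalence of triangulated categories sends $\sigma$-semistable objects to $\sigma'$-semistable objects of the same phase, so $P_\sigma=P_{\sigma'}$; by Corollary~\ref{density for kronecker}, $P_{\sigma'}=R_{v,\Delta_{l+}}$, which gives $P_\sigma=R_{v,\Delta_{l+}}$. (Alternatively, one can avoid lifting to the derived level and argue directly: for $t\in(0,1]$ the objects of $\mc P(t)$ are the $Z$-semistable objects of $\mc A$, which via $F$ are exactly the $Z'$-semistable objects of $Rep_\CC(K(l))$ of phase $t$, and all of $P_\sigma$ is recovered from the $(0,1]$-part by the periodicity $\mc P(t+1)=\mc P(t)[1]$.)

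For the last assertion I would appeal to Lemma~\ref{lemma for R of a quiver}, which exhibits inside $R_{v,\Delta_{l+}}$ a subset dense in the arc $\exp(\ri[u,v])\subset S^1$ of that lemma (where, in its notation, $u=f(\tfrac{1}{2}(l-\sqrt{l^2-4}))$ and $v=f(\tfrac{1}{2}(l+\sqrt{l^2-4}))$). Because $l\ge 3$ we have $l^2-4>0$, hence $\tfrac{1}{2}(l-\sqrt{l^2-4})<\tfrac{1}{2}(l+\sqrt{l^2-4})$, and since $f$ is strictly increasing the two endpoints are distinct; thus the arc is nondegenerate and $P_\sigma=R_{v,\Delta_{l+}}$ is dense in it.

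The step I expect to be the main obstacle is the first one: upgrading the abelian equivalence $F$ to a triangulated equivalence $\mc T\simeq D^b(Rep_\CC(K(l)))$ compatible with the hearts, i.e.\ verifying that the realization functor of the (hereditary) heart $\mc A$ is an equivalence. Once that is secured---or circumvented by the direct argument indicated above---the remainder is routine transport of stability data under an equivalence together with the already established description of $R_{v,\Delta_{l+}}$ in Lemma~\ref{lemma for R of a quiver}.
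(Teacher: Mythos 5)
Your fallback argument (the parenthetical one) is exactly the paper's proof: the abelian equivalence $F:\mc A\to Rep_\CC(K(l))$ of Lemma \ref{from pair to kronecker quiver} induces, via the parametrization of Remark \ref{HN property and locally finiteness} (both hearts are finite length with two simples, so every stability function has the HN property), a bijection $F^*:\HH^{Rep_\CC(K(l))}\to\HH^{\mc A}$ matching central charges on $E_i$ and $s_i$; since for $t\in(0,1]$ the objects of $\mc P(t)$ are the $Z$-semistable objects of $\mc A$ in the sense of Definition \ref{Z-semistable} (an intrinsic abelian-category notion) and the remaining phases are obtained by the shift periodicity, one gets $P_\sigma=P_{\sigma'}$ and concludes by Corollary \ref{density for kronecker}. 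The paper never lifts $F$ to the derived level.

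Your primary route, however, contains a genuine gap, not merely a step that is hard to secure: it is in general false that $\mc T=\left\langle E_1,E_2\right\rangle$ is triangle-equivalent to $D^b(Rep_\CC(K(l)))$. Definition \ref{defKroneckeppair} only requires $\Hom^{\leq 0}(E_1,E_2)=0$ and $3\leq\dim\Hom^1(E_1,E_2)<\infty$; it does \emph{not} exclude $\Hom^k(E_1,E_2)\neq 0$ for $k\geq 2$. Heredity of $\mc A$ as an abstract abelian category controls only the Yoneda extensions $\Ext^k_{\mc A}$, not the groups $\Hom_{\mc T}(X,Y[k])$ for $k\geq 2$; when $\Hom_{\mc T}(E_1,E_2[2])\neq 0$ the comparison maps $\Ext^k_{\mc A}(X,Y)\to\Hom_{\mc T}(X,Y[k])$ fail to be isomorphisms, the realization functor $D^b(\mc A)\to\mc T$ is not full, and no identification $\st(\mc T)\simeq\st(D^b(Rep_\CC(K(l))))$ is available. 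This is why the statement is phrased for $\sigma\in\HH^{\mc A}$, built from the heart $\mc A\subset\mc T$ via Definition \ref{H to A}, rather than transported through a derived equivalence. Since you explicitly offered the direct argument as a way to circumvent this step, your proposal does reach a correct proof, but the primary route should be discarded rather than "secured". Your final appeal to Lemma \ref{lemma for R of a quiver} for the nondegenerate arc (using $l\geq 3$, hence $l^2-4>0$ and strict monotonicity of $f$) is fine.
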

\bpr   We take the  equivalence $F : \mc A \rightarrow Rep_\CC(K(l))$ constructed in Lemma \ref{from pair to kronecker quiver},
$\mc A \subset \mc T$, $Rep_\CC(K(l)) \subset D^b(Rep_\CC(K(l)))$. This equivalence induces  a natural   bijection
$F^*:\HH^{Rep_\CC(K(l))} \rightarrow \HH^{\mc A}$.
% $\HH^{\mc A} \subset \st(\mc T)$, $\HH^{Rep(K(l))} \subset \st(D^b(Rep(K(l))))$
For  $\sigma=(\mc P, Z) \in  \HH^{\mc A}$, $\sigma' = (\mc P', Z')\in \HH^{Rep_\CC(K(l))}$, from $F^*(\sigma')=\sigma$ it follows
$Z(E_i)=Z'(s_i)$ (because $F(E_i)=s_i$) and  $ P_\sigma =P_{\sigma'} $.   Then the corollary follows from Corollary \ref{density for kronecker}.
\epr
Whence in this Corollary we obtained $\sigma \in \st(\left \langle E_1, E_2 \right \rangle)$ with $P_\sigma$ dense in a nontrivial arc.
 To obtain $\sigma' \in \st(\mc D)$ with such a property, we want to extend the given  $\sigma \in \st(\left \langle E_1, E_2 \right \rangle)$
  to a stability condition on $\mc D \supset \mc T$ in the following sense:
\begin{df} Let $\mc T \subset \mc D$ be a triangulated subcategory in a triangulated category $\mc D$.
 We say that  $\sigma=(\mc P,Z) \in \st(\mc T)$  can be extended to $\mc D$ (or extendable to $\mc D$) if there exists
 $\sigma_e=(\mc P_e,Z_e) \in \st(\mc D)$, s. t.  $\ \ \ {Z_e}\circ K_0(\mc T \subset \mc D)=Z$ and  $\{{\mc P}(t) \subset \mc P_e(t)\}_{t\in\RR}$.
  In this case $\sigma_e$ is called extension of $\sigma$.
 \end{df}

\begin{remark} \label{from extension to phases} From  Definition \ref{P_sigma} it follows that if  $\sigma_e$ is an extension of $\sigma$, then
$ P_{\sigma_e} \supset P_\sigma $.
\end{remark}
By Corollary \ref{coro for kronecker pairs 1} it follows:
 \begin{theorem} \label{coro for kronecker pairs 0} Let $(E_1,E_2)$ be  a  Kronecker pair in a  $\CC$-linear triangulated category   $\mc D$.
 Denote $l=\dim(\Hom^1(E_1,E_2))$, $\mc T=\left \langle E_1, E_2 \right \rangle \subset \mc D$ and $\mc A$ - the extension closure of $(E_1,E_2)$.

 Then any $ \sigma \in \st D^b(\mc D)$, which is an extension  of a stability condition  $(\mc P, Z) \in \HH^{\mc A} \subset \st D^b(\mc T)$ with
 $\arg(Z(E_1))>\arg(Z(E_2))$ satisfies $P_{ \sigma }\supset R_{{v},\Delta_{l+}} $, where ${v}=(Z(E_1),Z(E_2))$. In particular  $ P_{ \sigma }$ is dense
 in an arc of non-zero length.
 \end{theorem}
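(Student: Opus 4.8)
The plan is to deduce the statement directly from Corollary \ref{coro for kronecker pairs 1} together with Remark \ref{from extension to phases}; the substantive work has already been carried out in Lemma \ref{from pair to kronecker quiver} and Corollary \ref{density for kronecker}, and what remains is essentially bookkeeping about extensions of stability conditions.

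First I would invoke Corollary \ref{coro for kronecker pairs 1}. Given the Kronecker pair $(E_1,E_2)$ with $l=\dim_\CC(\Hom^1(E_1,E_2))\ge 3$, and the stability condition $(\mc P,Z)\in \HH^{\mc A}\subset \st D^b(\mc T)$ with $\arg(Z(E_1))>\arg(Z(E_2))$, that corollary gives $P_{(\mc P,Z)}=R_{v,\Delta_{l+}}$ with $v=(Z(E_1),Z(E_2))$. By Lemma \ref{lemma for R of a quiver}, which underlies that corollary, $R_{v,\Delta_{l+}}$ contains a subset dense in the arc $\exp(\ri[u,v])\subset S^1$, and this arc has positive length: since $l\ge 3$ we have $l^2-4>0$, so $\tfrac12(l-\sqrt{l^2-4})\neq\tfrac12(l+\sqrt{l^2-4})$, and $f$ from Remark \ref{remark for z_1,z_2} is strictly increasing, whence $u\neq v$. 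Thus $P_{(\mc P,Z)}$ is already dense in an arc of non-zero length inside $\st D^b(\mc T)$.

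Second I would use the hypothesis that $\sigma=(\mc P_e,Z_e)\in\st D^b(\mc D)$ is an extension of $(\mc P,Z)$, that is, $Z_e\circ K_0(\mc T\subset\mc D)=Z$ and $\mc P(t)\subseteq\mc P_e(t)$ for all $t\in\RR$. By Remark \ref{from extension to phases} — which is immediate from Definition \ref{P_sigma}, since $\mc P(t)\neq\{0\}$ forces $\mc P_e(t)\neq\{0\}$ — one obtains $P_\sigma\supseteq P_{(\mc P,Z)}$. Combining with the previous paragraph, $P_\sigma\supseteq R_{v,\Delta_{l+}}$, and since $R_{v,\Delta_{l+}}$ is dense in an arc of non-zero length, so is $P_\sigma$. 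This proves both assertions of the theorem.

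I do not expect a genuine obstacle. The one place deserving a word of care is the claim that "extension" preserves the containment of sets of semistable phases, but this is precisely Remark \ref{from extension to phases} and follows at once from the definitions. All the real difficulty is upstream: the construction in Lemma \ref{from pair to kronecker quiver} of the exact equivalence $\mc A\simeq Rep_\CC(K(l))$ sending $E_i\mapsto s_i$ (via the semiorthogonal decomposition $\mc T=\langle\langle E_1\rangle,\langle E_2\rangle\rangle$ and the functors $\lambda_1,\lambda_2$), and the realization in Lemma \ref{density for kronecker0} of every Schur root of $K(l)$ as the dimension vector of a $\sigma$-semistable object via King's criterion, together with the fact (Kac, Ringel) that all positive roots of $K(l)$ are Schur roots. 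Once those are in hand, as they are in the excerpt, the present theorem is a formal consequence.
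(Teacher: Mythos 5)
Your proposal is correct and follows exactly the paper's route: the theorem is deduced from Corollary \ref{coro for kronecker pairs 1} (which gives $P_{(\mc P,Z)}=R_{{v},\Delta_{l+}}$, dense in an arc by Lemma \ref{lemma for R of a quiver}) combined with Remark \ref{from extension to phases} giving $P_\sigma\supseteq P_{(\mc P,Z)}$. Your extra check that the arc has non-zero length (since $l\geq 3$ forces $u\neq v$ by the strict monotonicity of $f$) is a small, welcome addition to what the paper leaves implicit.
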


One setting, where we can extend these stability conditions, is as
follows.

Assume that $(E_0,E_1,\dots,E_n)$ is a full Ext-exceptional collection\footnote{The ``Ext-'' means $\Hom^{\leq 0}(E_i,E_j)=0$ for $0\leq i<j\leq n$.} in $\mc D$.
 Then for any $0\leq i<j\leq n$ the extension closure $\mc A_{ij}$ of $E_i,E_{i+1},\dots,E_j$ is a heart of a bounded $t$-structure in
 $\mc T_{ij}=\left \langle E_i,E_{i+1},\dots,E_j \right \rangle \subset \mc D$ (see \cite[Lemma 3.14]{Macri},\cite{collinsP}), hence we have a corresponding
 family $\HH^{{\mc A}_{ij}} \subset \st(\mc T_{ij})$. In this setting all  stability conditions in $\HH^{{\mc A}_{ij}} $ are extendable to $\mc D$. The precise
 statement is (see  \cite{DK}  and  \cite[Proposition 3.17]{Macri}) that    there is a  surjective map\footnote{We denote here ${\mc A} ={\mc A}_{0 n}$.} $\pi_{ij}:\HH^{\mc A} \rightarrow \HH^{\mc A_{ij}}$,
  s. t.  for any    $\sigma \in \HH^{\mc A_{ij}}$, $\sigma_e \in \HH^{\mc A}$ from $\pi_{ij}(\sigma_e)=\sigma$  it follows that $\sigma_e$ is an extension
  of $\sigma$.  Having the desired extensions we obtain by Corollary \ref{coro for kronecker pairs 0}:
\begin{coro} \label{coro for kronecker pairs 2} Let $(E_0,E_1,\dots,E_n)$ be a full Ext-exceptional collection in a $\CC$-linear triangulated category
$\mc D$. Let     $(E_i,E_{i+1})$ be a Kronecker pair for some $0\leq i \leq n-1$.  Denote $l=\dim(\Hom^1(E_i,E_{i+1}))$ and the extension closure of
$(E_0,E_1,\dots,E_n)$ by  $\mc A$.

 Then any $ \sigma=(\mc P, Z) \in \HH^{\mc A}$ with $\arg(Z(E_i))>\arg(Z(E_{i+1}))$ satisfies $P_{ \sigma }\supset R_{{v},\Delta_{l+}} $, where
 ${v}=(Z(E_i),Z(E_{i+1}))$. In particular  $ P_{ \sigma }$ is dense in an arc of non-zero length.
\end{coro}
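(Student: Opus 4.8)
The plan is to deduce the statement from Theorem~\ref{coro for kronecker pairs 0} by restricting the given $\sigma$ along the inclusion $\mc T_{i,i+1}=\left\langle E_i,E_{i+1}\right\rangle\hookrightarrow\mc D$. Since $(E_0,\dots,E_n)$ is a full Ext-exceptional collection, the extension closure $\mc A_{i,i+1}$ of $(E_i,E_{i+1})$ is a heart of a bounded $t$-structure in $\mc T_{i,i+1}$, and the surjection $\pi_{i,i+1}:\HH^{\mc A}\to\HH^{\mc A_{i,i+1}}$ recalled just before the corollary (from \cite{DK} and \cite[Proposition~3.17]{Macri}) has the property that $\sigma$ is an extension of $\sigma_0:=\pi_{i,i+1}(\sigma)$. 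So the first step is to pass from $\sigma$ to $\sigma_0=(\mc P_0,Z_0)\in\HH^{\mc A_{i,i+1}}$ in this way, keeping in mind that $\sigma\in\HH^{\mc A}\subset\st(\mc D)$ since $\mc A$ is a heart in $\mc D$.

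Next I would transfer the hypothesis $\arg(Z(E_i))>\arg(Z(E_{i+1}))$ to $\sigma_0$. By the very definition of an extension, the restriction of $Z$ along $K_0(\mc T_{i,i+1}\subset\mc D)$ equals $Z_0$; since $E_i,E_{i+1}\in\mc T_{i,i+1}$ this gives $Z_0(E_i)=Z(E_i)$ and $Z_0(E_{i+1})=Z(E_{i+1})$, whence $\arg(Z_0(E_i))>\arg(Z_0(E_{i+1}))$ and $v=(Z_0(E_i),Z_0(E_{i+1}))=(Z(E_i),Z(E_{i+1}))$. Now $(E_i,E_{i+1})$ is a Kronecker pair in $\mc D$ with $l=\dim\Hom^1(E_i,E_{i+1})$, the stability condition $\sigma_0\in\HH^{\mc A_{i,i+1}}$ satisfies $\arg(Z_0(E_i))>\arg(Z_0(E_{i+1}))$, and $\sigma$ is an extension of $\sigma_0$; hence Theorem~\ref{coro for kronecker pairs 0} applies verbatim and yields $P_\sigma\supset R_{v,\Delta_{l+}}$. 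The density assertion is then immediate, since by Lemma~\ref{lemma for R of a quiver} the set $R_{v,\Delta_{l+}}$ contains a subset dense in an arc of $S^1$ of positive length (this is also recorded in the ``in particular'' clause of Theorem~\ref{coro for kronecker pairs 0}).

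The only point that needs a line of verification is the compatibility of central charges under $\pi_{i,i+1}$, i.e.\ that the restriction of $Z$ to $K_0(\mc T_{i,i+1})$ really is $Z_0$ on $E_i$ and $E_{i+1}$; but this is built into the notion of an extension of a stability condition, so I expect no genuine obstacle here. Everything else is a formal concatenation of Theorem~\ref{coro for kronecker pairs 0} with the extension statement quoted just before the corollary, so essentially no new computation is required.
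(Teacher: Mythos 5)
Your proposal is correct and follows exactly the route the paper intends: the corollary is stated as an immediate consequence of the surjection $\pi_{i,i+1}:\HH^{\mc A}\to\HH^{\mc A_{i,i+1}}$ (whose fibers consist of extensions, as recalled from \cite{DK} and \cite[Proposition 3.17]{Macri}) combined with Theorem \ref{coro for kronecker pairs 0}, and your only added verification---that the central charge of $\sigma$ restricts to that of $\pi_{i,i+1}(\sigma)$ on $E_i,E_{i+1}$, so the hypothesis $\arg(Z(E_i))>\arg(Z(E_{i+1}))$ transfers---is indeed built into the definition of an extension. Nothing further is needed.
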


\begin{coro} \label{coro for kronecker pairs 3} Let $(E_0,E_1,\dots,E_n)$ be any full exceptional collection in a $\CC$-linear triangulated category
$\mc D$ of finite type\footnote{by finite type we mean that for any pair $X,Y \in \mc T$ we have $\sum_{k\in \ZZ}\dim(\Hom(X,Y[k])) < \infty$.}. Let
  $(E_i,E_{j})$ be a Kronecker pair for some $0\leq i <j \leq n$.  Then there exists  a  family of stability conditions $\sigma$ on $\mc T$ for which
   $ P_{ \sigma }$ is dense in an arc of non-zero length.
\end{coro}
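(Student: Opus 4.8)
The plan is to reduce to Corollary~\ref{coro for kronecker pairs 2}, which already proves density for a Kronecker pair occupying \emph{consecutive} positions of a full Ext-exceptional collection. Two things must be arranged: the two members of the given Kronecker pair must become adjacent in a full exceptional collection, and that collection must be made Ext-exceptional. Both are possible because $\mc D$ is of finite type, so every graded space $\Hom^\bullet(X,Y)$, $X,Y\in\mc D$, is finite-dimensional, hence bounded; this also guarantees that the mutation triangles involve only finitely many copies of an object.

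\emph{Making $E_i$ and $E_j$ adjacent by mutations.} If $j=i+1$ there is nothing to do. If $j>i+1$, I would bubble $E_{i+1}$ to the far right of the collection by repeatedly applying the right mutation through the next object; since a right mutation sends an exceptional pair $(A,B)$ to $(B,R_B A)$ and leaves $B$ unchanged, the objects $E_{i+2},\dots,E_n$ are not affected, and the outcome is a full exceptional collection $(E_0,\dots,E_i,E_{i+2},\dots,E_n,\wt E_{i+1})$. Repeating this with $E_{i+2}$, then $E_{i+3}$, \dots, then $E_{j-1}$ --- each moved all the way to the right --- yields a full exceptional collection $(E_0,\dots,E_i,E_j,E_{j+1},\dots,E_n,\wt E_{i+1},\dots,\wt E_{j-1})$ of $\mc D$ in which $E_i$ is immediately followed by $E_j$. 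In all these mutations the objects $E_0,\dots,E_i$ and $E_j,\dots,E_n$ are never altered, so $\Hom^\bullet(E_i,E_j)$ is unchanged and $(E_i,E_j)$ remains a Kronecker pair, now in consecutive positions $(a,a+1)$ of a full exceptional collection which I relabel $(G_0,\dots,G_n)$, with $(G_a,G_{a+1})=(E_i,E_j)$.

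\emph{Shifting to an Ext-exceptional collection.} Choose $N\in\NN$ with $\Hom^k(G_b,G_c)=0$ for all $b,c$ whenever $|k|\ge N$, and put $m_b=(a-b)N$ for $b\le a$ and $m_b=(a+1-b)N$ for $b\ge a+1$, so that $m_a=m_{a+1}=0$ while $m_b-m_c\ge N$ for every $b<c$ with $(b,c)\ne(a,a+1)$. With $G'_b:=G_b[m_b]$ one has, for $b<c$, $\Hom^{\le 0}(G'_b,G'_c)=\Hom^{\le m_c-m_b}(G_b,G_c)$, which vanishes: when $(b,c)\ne(a,a+1)$ because $m_c-m_b\le -N$, and when $(b,c)=(a,a+1)$ because $\Hom^{\le 0}(G_a,G_{a+1})=0$ as $(G_a,G_{a+1})$ is a Kronecker pair. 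Hence $(G'_0,\dots,G'_n)$ is a full Ext-exceptional collection of $\mc D$, and $(G'_a,G'_{a+1})$ equals the Kronecker pair $(G_a,G_{a+1})$ with both objects shifted by the same amount $0$, hence is still a Kronecker pair, in consecutive positions. Now Corollary~\ref{coro for kronecker pairs 2} applies: letting $\mc A$ be the extension closure of $(G'_0,\dots,G'_n)$, every $\sigma=(\mc P,Z)\in\HH^{\mc A}$ with $\arg(Z(G'_a))>\arg(Z(G'_{a+1}))$ has $P_\sigma$ dense in an arc of non-zero length. These $\sigma$ form a non-empty family of stability conditions on $\mc D$ --- the $\arg$-inequality is a non-empty open condition in the standard parametrization of $\HH^{\mc A}$ by the central charges of the simple objects $G'_0,\dots,G'_n$ (cf.\ Remark~\ref{HN property and locally finiteness} and \cite[Lemma~3.14]{Macri}) --- which is the required family, with $\mc T=\mc D$.

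The only real point is to carry the Kronecker condition through both reductions, and in each case it is handled by the same observation: do not touch $E_i$ or $E_j$ themselves. In the mutation step one mutates only the objects strictly between them, so $\Hom^\bullet(E_i,E_j)$ is literally unchanged; in the shift step one shifts $E_i$ and $E_j$ by equal amounts, which is compatible with Ext-exceptionality precisely because the constraint that the pair $(E_i,E_j)$ alone imposes, namely $\Hom^{\le 0}(E_i,E_j)=0$, is already part of the Kronecker-pair hypothesis. Beyond this, every ingredient (mutations of full exceptional collections, boundedness of $\Hom$ under finite type, Corollary~\ref{coro for kronecker pairs 2}) is available.
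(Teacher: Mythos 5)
Your proposal is correct and follows essentially the same route as the paper: mutate the full exceptional collection so that $E_i$ and $E_j$ sit, unchanged, in adjacent positions (the paper brings the pair to the front as $(E_i,E_j,C_2,\dots,C_n)$, you push the intervening objects to the far right — the same idea), then use finite type to shift the remaining objects so the collection becomes Ext-exceptional while the pair keeps relative shift zero, and finally invoke Corollary \ref{coro for kronecker pairs 2}. The only differences from the paper's proof are the cosmetic placement of the pair and your more explicit choice of shift amounts, so there is nothing substantive to add.
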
\bpr
First by mutations of the exceptional collection $(E_0,E_1,\dots,E_n)$ we can obtain a full exceptional collection $(E_i,E_j,C_2,\dots,C_n)$. Then,
because $\mc T$ is of finite type, after shifts of $C_2,C_3,\dots,C_n$ we can  obtain a full exceptional collection $\mc B = \{B_0,B_2,\dots, B_n \}$,
which is Ext and $B_0=E_i$, $B_1=E_j$. So we get a full Ext-exceptional collection  $\mc B$ for which $(B_0,B_1)$ is a Kronecker pair. Now if we denote
by $\mc A$ the extension closure of  $\mc B$ by Corollary \ref{coro for kronecker pairs 2} it follows that any $\sigma=(\mc P,Z) \in \HH^{\mc A}$  with
$\arg(Z(B_0))>\arg(Z(B_1))$ satisfies $P_\sigma \supset R_{v,\Delta_{l+}}$, where $v=(Z(B_0),Z(B_1))$ and $l=\dim_\CC(\Hom^1(B_0,B_1))$.
\epr
\begin{remark} \label{more general} More general setting, where the stability conditions $\HH^{\mc A}$ in Theorem \ref{coro for kronecker pairs 0} can be extended, is that there exists  a semi-orthogonal decomposition $( \mc D',  \left \langle E_1,E_2 \right \rangle )$ of $\mc D $ with additional assumptions, specified in \cite[Theorem 3.6]{collinsP} and  \cite[Proposition 3.5]{collinsP}.
\end{remark}
\subsection{Application to quivers}
%\footnote{In this subsection everything circles round the integer $3$.}

In this subsection we apply the results of the previous subsection \ref{kronecker pairs}  to quivers and obtain Corollary
\ref{upper bound for euc}, Proposition \ref{non euc and non dyn}. Table  \eqref{table} contains Proposition \ref{non euc and non dyn} and the results of subsection
\ref{Euc and Kron}.

Let $Q$ be an acyclic quiver. The notations $V(Q)$, $Arr(Q)$, $\Gamma(Q)$ are explained in subsubsection \ref{quivers and Kac}.
It   is shown in \cite{WCB1}  that any exceptional collection $(E_1,E_2,\dots,E_n)$ in $Rep_\CC(Q)$ of length $n=\#(V(Q))$  is a  full exceptional collection
of $D^b(Rep_\CC(Q))$. Furthermore, any exceptional collection $(E_1,E_2,\dots,E_i)$ in $Rep_\CC(Q)$  with $i<n$ can be completed to a full
$(E_1,E_2,\dots,E_i, E_{i+1},\dots, E_n)$  exceptional collection. In particular if we are given a Kronecker pair in $Rep_\CC(Q)$ we can complete it to a
 full exceptional collection and,  since $D^b(Rep_\CC(Q))$ is of finite type,  we  can apply  Corollary \ref{coro for kronecker pairs 3}.
  So that only existence of a Kronecker pair in $Rep_\CC(Q)$ is enough to apply Corollary \ref{coro for kronecker pairs 3} and to construct $\sigma$ with
  $P_\sigma$ dense in an arc.  Now  using  Corollaries \ref{Dynkin},  \ref{coro for density1},  we can  easily prove:
  \begin{coro} \label{upper bound for euc} Let   $Q$ be either an Euclidean  or a Dynkin  quiver.  Then any exceptional pair $(E_1,E_2)$ in
  $Rep_\CC(Q)$ satisfies  $\dim_\CC(\Hom(E_1,E_2)) < 3$,$\dim_\CC({\rm Ext}^1(E_1,E_2)) < 3$.
  \end{coro}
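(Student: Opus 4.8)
The plan is to argue by contradiction. Suppose $(E_1,E_2)$ is an exceptional pair in $Rep_\CC(Q)$ with $\dim_\CC\Hom(E_1,E_2)\geq 3$ or $\dim_\CC\mathrm{Ext}^1(E_1,E_2)\geq 3$. I will manufacture a full exceptional collection of $D^b(Rep_\CC(Q))$ whose first two members form a Kronecker pair; since $D^b(Rep_\CC(Q))$ is of finite type, Corollary~\ref{coro for kronecker pairs 3} then produces a stability condition $\sigma$ with $P_\sigma$ dense in an arc of nonzero length. This contradicts Lemma~\ref{Dynkin} when $Q$ is Dynkin and Corollary~\ref{coro for density1} when $Q$ is Euclidean, both of which force $P_\sigma$ to be finite or to have exactly two limit points, hence nowhere dense in $S^1$.

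The crux is reducing to a Kronecker pair. Since $Rep_\CC(Q)$ is hereditary and $E_1,E_2$ lie in its heart, $\Hom^{<0}(E_1,E_2)=0$ and $\Hom^\bullet(E_1,E_2)$ is concentrated in degrees $0$ and $1$; I will also use the standard fact that for an exceptional pair over a hereditary algebra at most one of $\Hom(E_1,E_2)$, $\mathrm{Ext}^1(E_1,E_2)$ is nonzero. If $\dim_\CC\mathrm{Ext}^1(E_1,E_2)\geq 3$ then $\Hom(E_1,E_2)=0$, so $\Hom^{\leq 0}(E_1,E_2)=0$ and $3\leq\dim_\CC\Hom^1(E_1,E_2)<\infty$, i.e. $(E_1,E_2)$ is itself a Kronecker pair (Definition~\ref{defKroneckeppair}). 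If $\dim_\CC\Hom(E_1,E_2)=m\geq 3$ then $\mathrm{Ext}^1(E_1,E_2)=0$, so $(E_1,E_2)$ is a strong exceptional pair, and I pass to the shifted left mutation $(L_{E_1}E_2[1],\,E_1)$. From the defining triangle $L_{E_1}E_2\to \Hom(E_1,E_2)\otimes E_1\to E_2\to L_{E_1}E_2[1]$ one checks, using $\Hom^\bullet(E_2,E_1)=0$ and that $\Hom^\bullet(E_1,E_2)=\CC^m$ sits in degree $0$, that $\Hom^\bullet(E_1,L_{E_1}E_2)=0$ — so the pair is exceptional — and that $\Hom^\bullet(L_{E_1}E_2[1],E_1)\cong\CC^m$ is concentrated in degree $1$; thus $(L_{E_1}E_2[1],E_1)$ is a Kronecker pair, and it generates the same triangulated subcategory $\left\langle E_1,E_2\right\rangle$. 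Denote the resulting Kronecker pair by $(F_1,F_2)$ in either case. By \cite{WCB1} the exceptional pair $(E_1,E_2)$ of representations extends to a full exceptional collection $(E_1,E_2,C_3,\dots,C_n)$ of $D^b(Rep_\CC(Q))$; since $\Hom^\bullet(C_j,E_i)=0$ for $j\geq 3$ and $i\in\{1,2\}$, the functors $\Hom^\bullet(C_j,-)$ vanish on $\left\langle E_1,E_2\right\rangle=\left\langle F_1,F_2\right\rangle$, so $(F_1,F_2,C_3,\dots,C_n)$ is again a full exceptional collection, now with a Kronecker pair as its first two members. Applying Corollary~\ref{coro for kronecker pairs 3} completes the contradiction.

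I expect the main obstacle to be the mutation bookkeeping in the $\Hom$-type case: the computation is short, but one must track the shift so as to land on a pair whose graded $\Hom$-space is concentrated in degree exactly $1$ (not $0$, and not spread over two degrees), which is precisely what the definition of a Kronecker pair requires. One could instead invoke Bondal's theorem — a strong exceptional pair is tilting, so $\left\langle E_1,E_2\right\rangle\simeq D^b(Rep_\CC(K(m)))$, and the pair of simple objects of $Rep_\CC(K(m))$ transports to the desired Kronecker pair. Also, once one knows $\Hom$ and $\mathrm{Ext}^1$ cannot both be nonzero, a Cauchy--Schwarz estimate for the positive semidefinite symmetrized Euler form — which takes the value $2$ on each real root, hence has absolute value $\leq 2$ on any pair of distinct real roots — already yields the sharper bound $\leq 2$; but the density argument is the route indicated by the surrounding results.
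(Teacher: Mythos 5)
Your proposal is correct and is essentially the paper's own argument: both proofs derive the bound from the non-existence of Kronecker pairs in $D^b(Rep_\CC(Q))$ — a Kronecker pair, completed to a full exceptional collection by \cite{WCB1}, would via Corollary \ref{coro for kronecker pairs 3} give $P_\sigma$ dense in an arc, contradicting Lemma \ref{Dynkin} resp.\ Corollary \ref{coro for density1} — and both use the left-mutation triangle for $L_{E_1}(E_2)$ to move the large $\Hom$-space into the degree required by Definition \ref{defKroneckeppair}. The only divergence is your appeal to the (true, standard, Happel--Ringel-type) fact that for an exceptional pair of representations at most one of $\Hom(E_1,E_2)$, $\Ext^1(E_1,E_2)$ is nonzero; the paper avoids this by instead bounding the minimal nonzero degree of $\Hom^*(E_1,E_2)$ directly and then using the isomorphism $\Hom^{i}(L_{E_1}(E_2),E_1)\cong \Hom^{-i}(E_1,E_2)$ to bound the maximal one, so your case split is a harmless shortcut rather than a gap.
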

  \bpr Since $Rep_\CC(Q)$ is hereditary then  the exceptional objects in $D^b(Rep_\CC(Q))$ are just shifts of exceptional objects in
  $Rep_\CC(Q)$ and then from the arguments above  and Corollaries \ref{Dynkin}, \ref{coro for density1} it follows that there does not exists a Kronecker pair in $D^b(Rep_\CC(Q))$.
   In other words for any exceptional pair $(E_1,E_2)$  in $D^b(Rep_\CC(Q))$  the minimal nonzero degree $\Hom^{min}(E_1,E_2)\neq 0$, $\Hom^{< min}(E_1,E_2)= 0$
    has dimension $\dim_\CC(\Hom^{min}(E_1,E_2))\leq 2$. Since $Rep_\CC(Q)$ is hereditary then there are at most two nonzero degrees in $\Hom^*(E_1,E_2)$ and it
     remains to show that the maximal nonzero degree $\Hom^{max}(E_1,E_2)\neq 0$, $\Hom^{> max}(E_1,E_2)= 0$ has dimension $\dim_\CC(\Hom^{max}(E_1,E_2))\leq 2$.

   For any exceptional pair $(E_1,E_2)$  it is well known that $(L_{E_1}(E_2), E_1)$ is also an exceptional pair, where $L_{E_1}(E_2)$ is determined by the
    distinguished triangle:
  \be    \begin{diagram} L_{E_1}(E_2) & \rTo & \Hom^*(E_1,E_2)\otimes E_1  & \rTo^{ev_{E_1, E_2}} &  E_2 & \rTo  & L_{E_1}(E_2)[1]. \end{diagram} \ee
Take any $i \in \ZZ$. We  show below that
$\Hom^{i}(L_{E_1}(E_2),E_1) \cong \Hom^{-i}(E_1,E_2)$, which means
that the maximal  non-zero degree of $\Hom^*(E_1,E_2)$   is the
minimal non-zero degree of\\ $\Hom^{*}(L_{E_1}(E_2),E_1)$  and
they are isomorphic. Then the corollary follows by the proved
inequality for the minimal non-vanishing degrees.

 We apply $\Hom^i(\_,E_1)$ to the triangle above and by $\Hom^*(E_2,E_1)=0$ it follows
 \be \label{proof of upper bound eq} \Hom^i(\Hom^*(E_1,E_2)\otimes E_1, E_1) \cong \Hom^i(L_{E_1}(E_2), E_1). \ee
  On the other hand (recall that $E_1$ is an exceptional object)
  \begin{gather} \nonumber \Hom^*(E_1,E_2)\otimes E_1 \cong \bigoplus_{j} E_1[-j]^{\dim(\Hom^j(E_1,E_2))}\ \ \ \ \  \Rightarrow \\
 \nonumber \Hom^i(\Hom^*(E_1,E_2)\otimes E_1, E_1)\cong \Hom(\oplus_{j} E_1[-j]^{\dim(\Hom^j(E_1,E_2))}, E_1[i])\cong \CC^{\dim(\Hom^{-i}(E_1,E_2))}, \end{gather}
which together with \eqref{proof of upper bound eq} give $\Hom^{i}(L_{E_1}(E_2),E_1) \cong \Hom^{-i}(E_1,E_2)$ and the corollary is proved.
   \epr

Next we want to prove:
 \begin{prop} \label{non euc and non dyn}
  Any acyclic connected  quiver $Q$, which is neither Euclidean nor Dynkin has a family of stability conditions $\sigma$ on $D^b(Rep_\CC(Q))$, s. t.
   $ P_{ \sigma }$ is dense in an arc of non-zero length.
   \end{prop}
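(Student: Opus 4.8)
The plan is to reduce the statement to producing a single \emph{Kronecker pair} (Definition~\ref{defKroneckeppair}) inside $Rep_\CC(Q)$: as explained just before the statement, any exceptional collection of $Rep_\CC(Q)$ completes to a full one \cite{WCB1} and $D^b(Rep_\CC(Q))$ is of finite type, so a Kronecker pair feeds into Corollary~\ref{coro for kronecker pairs 3} and yields a family of stability conditions $\sigma$ with $P_\sigma$ dense in an arc. Since $Rep_\CC(Q)$ is hereditary, I will use that an exceptional object of $D^b(Rep_\CC(Q))$ is a shift of an exceptional module and that (using $\Ext^{\ge 2}=0$ together with the freedom to shift) a Kronecker pair is the same data as two exceptional $\CC Q$-modules $M,N$ which are mutually $\Hom$-orthogonal and satisfy $\scal{[N],[M]}_Q=0$, $\scal{[M],[N]}_Q\le -3$ (Euler form \eqref{euler form}); then automatically $\Ext^1(N,M)=0$ and $\dim_\CC\Ext^1(M,N)=-\scal{[M],[N]}_Q\ge 3$, and $(M,N)$ is a Kronecker pair. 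So the task becomes: find two real Schur roots of $Q$ realized by $\Hom$-orthogonal exceptional modules $M,N$ with the two Euler pairings as above.

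First I would dispose of the easy case: if $\Gamma(Q)$ has two vertices joined by at least three arrows then, $Q$ being acyclic, these point the same way, say from $j$ to $i$, and $(s_i,s_j)$ is a Kronecker pair, since $\Hom^*(s_j,s_i)=0$, $\Hom^{\le 0}(s_i,s_j)=0$, and $\dim_\CC\Hom^1(s_i,s_j)=\dim_\CC\Ext^1(s_i,s_j)$ is the number of arrows, $\ge 3$ (this already reproves Corollary~\ref{density for kronecker} for $K(l)$, $l\ge 3$). Hence I may assume all edge-multiplicities of $\Gamma(Q)$ are $\le 2$, noting that among Dynkin and Euclidean quivers only $K(2)=\wt{A}_1$ carries a multiple edge, so a double edge already forces wildness. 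Next I would record the key reduction tool: for a \emph{full} subquiver $Q'\subset Q$, extension by zero $Rep_\CC(Q')\hookrightarrow Rep_\CC(Q)$ preserves $\Hom$ and $\Ext^1$ --- morphisms agree because $Q'$ is full, and any extension of zero-extended representations is again supported on $V(Q')$, hence comes from $Rep_\CC(Q')$, and the Euler forms match for the same reason --- so it carries exceptional objects, and Kronecker pairs, from $Q'$ to $Q$. Replacing $Q$ by a minimal wild full subquiver I may therefore assume $Q$ is \emph{minimal}: connected, wild, all multiplicities $\le 2$, $\#V(Q)\ge 3$, and with every proper connected full subquiver of Dynkin or Euclidean type (which also is consistent with those quivers carrying no Kronecker pair, by Corollary~\ref{upper bound for euc}).

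For minimal $Q$ I would build the pair by hand, using that the Tits form $q_Q(x)=\scal{x,x}_Q$ is indefinite: the goal is positive real roots $\alpha,\beta$ with $q_Q(\alpha)=q_Q(\beta)=1$, $\scal{\beta,\alpha}_Q=0$ and $\scal{\alpha,\beta}_Q\le -3$ (so $q_Q(\alpha+\beta)=2+\scal{\alpha,\beta}_Q\le -1$), chosen supported on proper Dynkin or Euclidean full subquivers --- where $M_\alpha,M_\beta$ can be taken to be preprojective or preinjective, hence exceptional, modules of those subquivers --- and with supports disjoint enough that $\Hom(M_\alpha,M_\beta)=\Hom(M_\beta,M_\alpha)=0$. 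As a model case, for $Q\colon 1\rightrightarrows 2\leftarrow 3$ one takes $\alpha=(2,3,0)$, the dimension vector of a preprojective module of the $\wt{A}_1$ on $\{1,2\}$, and $\beta=(0,0,1)=\ul{\dim}s_3$; then $q_Q(\alpha)=q_Q(\beta)=1$, $\scal{\beta,\alpha}_Q=0$, $\scal{\alpha,\beta}_Q=-3$, and $\Hom$ vanishes in both directions because the supports meet only at vertex $3$, where $M_\alpha=0$, so $(M_\alpha,s_3)$ is a Kronecker pair with $\dim_\CC\Ext^1(M_\alpha,s_3)=3$.

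The hard part will be carrying this through uniformly over all minimal $Q$: for each such quiver one must guarantee the existence of a pair of real Schur roots with the required $\Hom$-orthogonality, and verifying realizability by exceptional modules and the precise $\Hom$-vanishing is where the work lies. I would handle it either by going through the finite classification of minimal wild hereditary quivers --- the case $K(3)$, and the ``extended-Dynkin-plus-one-vertex (or one-edge)'' families, in each of which an explicit $(\alpha,\beta)$ supported on Dynkin/Euclidean full subquivers can be written down as above --- or, more cleanly, by invoking the known fact that a connected wild hereditary algebra admits exceptional pairs $(X,Y)$ with $\dim_\CC\Ext^1(X,Y)$ arbitrarily large, which follows from Schofield's and Crawley-Boevey's analysis of exceptional sequences and perpendicular categories (cf.\ also Ringel~\cite{Ringel} and Kac~\cite{Kac}). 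Once the Kronecker pair is in hand, the first paragraph finishes the argument.
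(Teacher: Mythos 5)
Your overall architecture is the same as the paper's: reduce to producing a single Kronecker pair in $Rep_\CC(Q)$ (completion of exceptional collections via \cite{WCB1} plus Corollary \ref{coro for kronecker pairs 3}), dispose of $\geq 3$ parallel arrows via the two simples, and then look for a pair consisting of an exceptional module supported on a Dynkin/Euclidean full subquiver together with a second module of disjoint support. But the actual content of the paper's proof is precisely the step you defer: showing that \emph{every} acyclic connected non-Dynkin, non-Euclidean quiver really contains such a configuration with the required orientation and $\Hom$-vanishing. The paper does this through Lemma \ref{find exc with big dim}, Corollaries \ref{coro for kronecker pairs} and \ref{coro for kronecker pairs1}, and a several-page case analysis (two parallel arrows; no parallel arrows without loops; minimal loops of length $3$, $4$, $\geq 5$; the three special configurations of Remark \ref{special cases} where the adjacent vertex is neither a sink nor a source relative to the Euclidean piece and one must switch to a Dynkin-supported construction). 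Your two proposed shortcuts do not close this gap: (i) the ``finite classification of minimal wild hereditary quivers'' is neither finite as stated (the cycle-plus-chord and extended-Dynkin-plus-vertex families grow with the number of vertices) nor established in your text, and even granting the classification you would still have to exhibit $(\alpha,\beta)$ and verify $\Hom$-orthogonality family by family, with orientations — exactly where the paper's cases $S_1,S_2,S_3$ show that the naive recipe (Euclidean subquiver plus adjacent vertex) can fail and a different pair is needed; (ii) the ``known fact'' that any connected wild hereditary algebra has exceptional pairs with $\dim\Ext^1\geq 3$ is essentially the statement to be proved (its converse is Corollary \ref{upper bound for euc}), and you give no precise citation; \cite{Kac} and \cite{Ringel} do not contain it in the generality needed. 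So as written the proposal is a plan whose hard part is missing.

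Two smaller points. Your translation of ``Kronecker pair'' into Euler-form data for two $\Hom$-orthogonal exceptional modules is correct in the hereditary setting, and the reduction to full subquivers (extension by zero preserving $\Hom$ and $\Ext^1$) is sound and matches the paper's use of \eqref{suppint} and Remark \ref{remark for extending}. However, in your model case $1\rightrightarrows 2\leftarrow 3$ with $\alpha=(2,3,0)$, $\beta=(0,0,1)$, the pairings come out as $\scal{\alpha,\beta}_Q=0$ and $\scal{\beta,\alpha}_Q=-3$, so the Kronecker pair is $(s_3,M_\alpha)$ with $\dim\Ext^1(s_3,M_\alpha)=3$, not $(M_\alpha,s_3)$; the slip is harmless here but illustrates that the orientation bookkeeping you postpone is exactly where the remaining work lies.
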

\begin{remark} \label{oriented cycles} If there are oriented cycles in $Q$, then one can show that there is  a  family\footnote{For example the representations $\{ \bd[1em] \CC &\rTo^{\lambda}&\CC \ed \}_{\lambda \in \CC}$ of the quiver with one vertex and one loop  are all simple and mutually non isomorphic} $\{s_\lambda \}_{\lambda \in \CC}$ of non isomorphic simple objects in ${\mc A}=Rep_\CC(Q)$. Then if  we define for simple object $s \in \mc A$ \be Z(s)=\left \{ \begin{array}{c c} \frac{\lambda}{\abs{\lambda}} & \mbox{if} \ \ s=s_\lambda, \lambda \in \HH \\
                                         \ri & \mbox{otherwise}\end{array} \right. \ee
 we obtain a stability function $Z:K_0({\mc A}) \rightarrow \CC$, which has HN property, since $\mc A$ is of finite length. One can show that the corresponding
 stability condition $\sigma=({\mc P}, Z) \in \HH^{\mc A}$ is locally finite.   Since all $\{ s_\lambda \}_{\lambda \in \CC}$ are simple in $\mc A$, then they are
 $\sigma$-semistable. Hence $P_\sigma = S^1$.
\end{remark}
So let us fix a quiver $Q$, satisfying the conditions of  Proposition \ref{non euc and non dyn}.    By the arguments given in the beginning of this subsection
(before Corollary \ref{upper bound for euc}) we reduce the proof to finding a Kronecker pair in $D^b(Rep_\CC(Q))$.  From here till the end of this subsubsection we
present the proof of the following:
\begin{prop} \label{prop for kronecer pairs in quivers} Any $Q$, satisfying the conditions in Proposition \ref{non euc and non dyn}, has   a Kronecker pair
in $Rep_\CC(Q)$ (i. e.  a  pair or representations  $(\rho,\rho')$ in $Rep_\CC(Q)$ with $\Hom_{\mc T}^*(\rho',\rho)=\Hom_{\mc T}^{\leq 0}(\rho, \rho')=0$, $\dim_\CC(\Hom_{\mc T}^1(\rho,\rho'))\geq 3$, where ${\mc T}= D^b(Rep_\CC(Q))$).
\end{prop}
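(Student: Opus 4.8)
The plan is to reduce, by passing to full subquivers, to a short list of \emph{minimal} quivers, and in each of these to exhibit a Kronecker pair of the shape $(I_v,B)$, where $I_v$ is an indecomposable injective module and $B$ is a large exceptional module supported on a tame subquiver. For the reduction, note that for a \emph{full} subquiver $Q'\subseteq Q$ zero-extension gives a fully faithful exact embedding $Rep_\CC(Q')\hookrightarrow Rep_\CC(Q)$ preserving $\Ext^1$ (an extension of two $Q'$-modules stays supported on $V(Q')$ because $Q'$ is full), so a Kronecker pair in $Rep_\CC(Q')$ is also one in $Rep_\CC(Q)$. Arguing by induction on $\#V(Q)$: if $Q$ has a proper full connected subquiver which is neither Dynkin nor Euclidean we are done by the inductive hypothesis, so we may assume $Q$ is minimal with these properties. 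If two vertices $i,j$ are joined by at least three arrows, the full subquiver on $\{i,j\}$ is $K(l)$ with $l\geq3$, and --- as $Q$ is acyclic --- one of $(s_i,s_j)$, $(s_j,s_i)$ is a Kronecker pair, by the computation $\Hom(s_i,s_j)=0$, $\dim\Ext^1(s_i,s_j)=l$ already used in the proof of Lemma~\ref{from pair to kronecker quiver}; so we may further assume that $Q$ has at most two parallel arrows.

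Now I would analyse the minimal case. Since $Q$ is connected and not Dynkin, $\Gamma(Q)$ contains an extended Dynkin subgraph (equivalently, the Tits form of $Q$ is not positive definite); as $Q$ is not Euclidean and the extended Dynkin diagrams are precisely the minimal connected graphs with non-positive-definite Tits form, a short combinatorial analysis of the remaining minimal quivers --- an extended Dynkin quiver together with one extra arrow, possibly attached to a new pendant vertex --- shows that one can choose a vertex $v$ of $Q$ with $Q'':=Q\setminus v$ (the full subquiver on $V(Q)\setminus\{v\}$) connected and Euclidean, joined to $v$ by one or two arrows. Fix such $v$ and let $\delta\in\NN_{\geq1}^{V(Q'')}$ be the null root of $Q''$. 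By Kac's theorem and the Dlab--Ringel description of the Coxeter transformation recalled above, $Q''$ has infinitely many preprojective indecomposables $B_k$ with $\ul{\dim}(B_k)=k\delta+O(1)$; preprojective modules are exceptional, and the $B_k$, extended by zero, remain exceptional in $Rep_\CC(Q)$ because $Q''$ is full in $Q$.

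Finally I would build the pair. Put $E_1=I_v$, the indecomposable injective of $Rep_\CC(Q)$ at $v$, and $E_2=B_k$. As $B_k$ vanishes at $v$ we have $\Hom(B_k,I_v)=0$, and $\Ext^1(B_k,I_v)=0$ since $I_v$ is injective; hence $(I_v,B_k)$ is an exceptional pair, and $\Hom^{<0}(I_v,B_k)=0$ automatically ($Rep_\CC(Q)$ being hereditary). A direct Euler-form computation gives $\scal{\ul{\dim}(I_v),\ul{\dim}(B_k)}_Q=-k\sum_{w}\delta_{w}+O(1)\to-\infty$, where $w$ runs over the $Q''$-endpoints of the arrows joining $v$ to $Q''$: one uses that $\ul{\dim}(I_v)$ has value $1$ at $v$ and, at a vertex $x$ of $Q''$, counts paths from $x$ into $v$, that $\ul{\dim}(B_k)=k\delta+O(1)$, and that $\delta$ spans the radical of the symmetrised Tits form of $Q''$, whence $\scal{\ul{\dim}(I_v)|_{V(Q'')},\delta}_{Q''}=-\sum_{w}\delta_{w}$; since $\delta$ is strictly positive this is $\leq-k$ for $k$ large. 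Therefore $\dim\Ext^1(I_v,B_k)-\dim\Hom(I_v,B_k)=-\scal{\ul{\dim}(I_v),\ul{\dim}(B_k)}_Q\to+\infty$, and since morphisms between the two terms of an exceptional pair over a hereditary category are concentrated in a single cohomological degree (a standard fact), $\Hom(I_v,B_k)=0$ once this difference is positive; so for $k\gg0$ we obtain $\Hom^{\leq0}(I_v,B_k)=0$ and $\dim\Hom^1(I_v,B_k)\geq3$, i.e.\ $(I_v,B_k)$ is a Kronecker pair, which completes the argument.

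The hard part is the combinatorics underlying the minimal case: one must verify that a connected acyclic quiver with at most two parallel arrows, all of whose proper full connected subquivers are Dynkin or Euclidean, is an extended Dynkin quiver with a single extra edge, and that across the resulting infinite families ($\wt{A}_n$-, $\wt{D}_n$-, $\wt{E}$-with-an-edge, besides $K(3)$) and for every admissible orientation the vertex $v$ can be chosen with $Q\setminus v$ connected Euclidean and $\scal{\ul{\dim}(I_v),\ul{\dim}(B_k)}_Q\to-\infty$ --- ruling out the degenerate attachment configurations in which this pairing would remain bounded.
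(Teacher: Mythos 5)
Your strategy is genuinely different from the paper's, and its homological core checks out. The paper always takes the simple module $s_v$ as one member of the pair, and therefore must arrange for $v$ to be a sink or a source relative to the support of the other member (Corollaries \ref{coro for kronecker pairs} and \ref{coro for kronecker pairs1}), which is what forces the long case analysis ending with the special quivers of Remark \ref{special cases}. You instead pair the injective $I_v$ of $Q$ with large preprojectives $B_k$ of the full subquiver $Q''=Q\setminus v$, so that $\Hom(B_k,I_v)\cong((B_k)_v)^*=0$ and $\Ext^1(B_k,I_v)=0$ hold with no orientation hypothesis, and you conclude via the Euler form plus the Happel--Ringel lemma (for an exceptional pair of indecomposables over a hereditary algebra, $\Hom$ and $\Ext^1$ cannot both be nonzero) --- that fact is indeed standard and applies here. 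I also confirm the key estimate, though your justification of it is slightly off: acyclicity gives $\ul{\dim}(I_v)=1_v+\sum_{a:t(a)=v}\ul{\dim}(I^{Q''}_{s(a)})$, and since $\scal{\beta,\ul{\dim}(I^{Q''}_w)}_{Q''}=\beta_w$ while $\delta$ is radical, one gets $\scal{\ul{\dim}(I_v),\delta}_Q=-\sum_e\delta_{w(e)}\le-1$, the sum over \emph{all} edges $e$ joining $v$ to $Q''$; your intermediate identity assigns the whole sum to $\scal{\ul{\dim}(I_v)|_{V(Q'')},\delta}_{Q''}$, which only accounts for the arrows into $v$ (arrows out of $v$ enter through the $1_v$-term), but the total and its negativity are correct. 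Incidentally, applied to the quiver $S_1$ of Remark \ref{special cases} (delete $a_2$, leaving $K(2)$ on $\{v,a_1\}$) your construction yields exceptional pairs $(I_{a_2},B_k)$ with $\dim\Ext^1\ge 2k+3$ unbounded, which is relevant to the question in Section 4 asking whether all exceptional pairs in these quivers satisfy $\dim\Hom^k\le 3$.

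The genuine gap is the step you yourself defer. After reducing to a \emph{minimal} quiver (every proper full connected subquiver Dynkin or Euclidean, at most two parallel arrows), everything hinges on producing a vertex $v$ with $Q\setminus v$ connected and Euclidean, and the structural claim through which you propose to verify this --- that a minimal quiver is an extended Dynkin quiver with a single extra arrow --- is false, so the announced family-by-family check would not cover all cases: any acyclic orientation of the complete graph on four vertices, or of a triangle with two (or three) of its sides doubled, is minimal in your sense and carries several extra edges. The statement you actually need is nevertheless true and admits a short proof that should replace the classification: since $Q$ has at most double arrows and is not some $K(l)$, some extended Dynkin subgraph lies on a proper vertex subset $A$ (if every extended Dynkin subgraph were spanning, all proper induced subgraphs would be Dynkin, and a spanning extended Dynkin graph together with any extra edge always produces a proper induced non-Dynkin subgraph: an induced $\wt{A}_1$, or the induced cycle cut out by a chord of $\wt{A}_n$ or by the extra edge of a tree type); by minimality the full subquiver $Q_A$ is then Euclidean, and for any adjacent vertex $u$ the full subquiver on $A\cup\{u\}$ properly contains an extended Dynkin graph, hence is neither Dynkin nor Euclidean, so minimality forces $A\cup\{u\}=V(Q)$, and $v=u$ works with $Q\setminus v=Q_A$. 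Until an argument of this kind is supplied your proof is incomplete; with it, your route closes and is in fact shorter than the paper's case analysis.
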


   Recall (see page 8 in \cite{WCB2}) that for $\rho,\rho'\in Rep_\CC(Q)$,  we have the formula\footnote{throughout the proof  $\Hom^i(,)$  means $\Hom^i_{\mc T}(,)$, ${\mc T}= D^b(Rep_\CC(Q))$}
\begin{gather}\label{euler} \dim_\CC(\Hom(\rho,\rho'))-\dim_\CC(\Hom^1(\rho,\rho'))=\scal{\ul{\dim}(\rho),\ul{\dim}(\rho')}_Q,  \end{gather}
where $\scal{,}_Q$ is defined in \eqref{origin,end}, \eqref{euler form}. Let us denote  for $\rho \in Rep_\CC(Q)$:
\be  supp(\rho) = supp(\ul{\dim}(\rho)) = \{i\in V(Q)\vert \ul{\dim}_i(\rho)\neq 0 \}. \ee
For $i\in V(Q)$   the simple representation $s_i$ is characterized by $supp(s_i)=\{i\}$, $\ul{\dim}_i(s_i)=1$. Obviously $\{ s_i\vert i \in V(Q) \}$ are exceptional objects in $D^b(Rep_\CC(Q))$. One of the representations in the Kronecker pair $(\rho, \rho')$, which we shall obtain, is among the exceptional objects $\{ s_i\vert i \in V(Q) \}$.

It is useful to denote
\begin{gather} A,B \subset V(Q)  \  \ Arr(A,B) = \{a \in Arr(Q) \vert s(a)\in A, t(a) \in B\}, \nonumber\\[-2mm] \label{Arr(A,B)} \\ Ed(A,B)=Ed(B,A)=Arr(A,B)\cup Arr(B,A). \nonumber\end{gather}

To find Kronecker pairs in $D^b(Rep_\CC(Q))$ we observe first,  that for $\rho, \rho' \in Rep_\CC(Q)$ we have $\Hom^{\leq -1}(\rho,\rho')=0$
in $D^b(Rep_\CC(Q))$ and
\begin{gather}  supp(\rho)\cap supp(\rho')=\emptyset \  \ \ \Rightarrow \ \ \ \Hom(\rho,\rho')=\Hom(\rho',\rho)=0, \nonumber \\[-2mm] \label{suppint} \\
\dim_\CC(\Hom^1(\rho,\rho'))= \sum_{ a \in Arr(supp(\rho), supp(\rho'))} \ul{\dim}_{s(a)}(\rho) \ul{\dim}_{t(a)}(\rho') \nonumber \end{gather}  which follows by \eqref{euler}. Another useful statement is
\begin{lemma} \label{find exc with big dim} Let $Q$ be  an Euclidean quiver.
Then for each  $n\in \NN$  there exists an exceptional representation $\rho \in Rep_\CC(Q)$, s. t. $\ul{\dim}_v(\rho)\geq n$ for each $v \in V(Q)$.
\end{lemma}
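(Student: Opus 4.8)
The plan is to realize, for each $n$, a positive \emph{real} root of $Q$ all of whose coordinates are at least $n$, and then to pass via Kac's theorem to the corresponding indecomposable representation, which will automatically be exceptional because its dimension vector is a real root.

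First I would use the null root $\delta\in\NN_{\ge 1}^{V(Q)}$ of the Euclidean quiver $Q$ (as in the proof of Lemma~\ref{finite number sonv seq}); all of its coordinates are $\ge 1$. Since $Q$ is extended Dynkin, the symmetrized Euler form $(\,,\,)_Q$ is positive semidefinite with radical exactly $\ZZ\delta$ (see \cite[p.~18]{WCB2}), so $\scal{\alpha,\delta}_Q+\scal{\delta,\alpha}_Q=(\alpha,\delta)_Q=0$ for every $\alpha$, and in particular $\scal{\delta,\delta}_Q=0$. Fixing any vertex $i\in V(Q)$, the simple root $e_i$ satisfies $\scal{e_i,e_i}_Q=1$ because $Q$ has no loops (see \eqref{euler form}). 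For $n\ge 0$ I set $\beta_n=e_i+n\delta\in\NN^{V(Q)}\setminus\{0\}$; expanding bilinearly,
\[
\scal{\beta_n,\beta_n}_Q=\scal{e_i,e_i}_Q+n\big(\scal{e_i,\delta}_Q+\scal{\delta,e_i}_Q\big)+n^2\scal{\delta,\delta}_Q=1,
\]
so $\beta_n\in\Delta_+(Q)$ by \eqref{roots by euler}; moreover $(\beta_n)_v=(e_i)_v+n\,\delta_v\ge n$ for every $v\in V(Q)$.

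Next I would invoke Kac's Theorem \eqref{Kac theorem} to produce an indecomposable $\rho\in Rep_\CC(Q)$ with $\ul{\dim}(\rho)=\beta_n$; then $\ul{\dim}_v(\rho)\ge n$ for all $v$, which is the required size estimate. It remains to see that $\rho$ is exceptional. Since $\beta_n$ is a real root, the indecomposable $\rho$ is Schur, i.e.\ $\mathrm{End}_{Rep_\CC(Q)}(\rho)=\CC$; this is exactly the statement about real-root indecomposables already invoked in the proof of Corollary~\ref{density for kronecker} (cf.\ \cite{Ringel}, \cite{Kac}). Then \eqref{euler} gives $\dim_\CC\Hom^1(\rho,\rho)=\dim_\CC\Hom(\rho,\rho)-\scal{\beta_n,\beta_n}_Q=1-1=0$, while $\Hom^{<0}(\rho,\rho)=0$ because $Rep_\CC(Q)$ is the heart of a $t$-structure; hence $\rho$ is an exceptional object of $D^b(Rep_\CC(Q))$ lying in $Rep_\CC(Q)$, with all entries of its dimension vector $\ge n$, as desired.

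The only ingredient here that is not already available in the earlier part of the excerpt is that an indecomposable representation with real-root dimension vector has a one-dimensional endomorphism algebra; this is standard for loop-free quivers and is precisely the fact used above for the Kronecker quiver, so I would simply cite it rather than reprove it. The remaining steps — the vanishing $(\alpha,\delta)_Q=0$, the one-line computation of $\scal{\beta_n,\beta_n}_Q$, the application of Kac's theorem, and the bookkeeping on coordinates — are all routine, so I do not expect a genuine obstacle.
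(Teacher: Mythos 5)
Your construction of the root is the same as the paper's (a simple root plus a large multiple of $\delta$), and the coordinate estimate and the computation $\scal{\beta_n,\beta_n}_Q=1$ are fine; but the final step, ``$\beta_n$ is a real root, hence the unique indecomposable $\rho$ is Schur,'' is a genuine gap, and in fact the assertion is false for Euclidean quivers when the vertex $i$ is chosen arbitrarily. The statement you borrow from the proof of Corollary~\ref{density for kronecker} (all real roots are Schur) is Ringel's result about the Kronecker quivers $K(l)$ only; it is not a general fact for loop-free quivers. For a Euclidean quiver the unique indecomposable with real-root dimension vector is exceptional exactly when the root has nonzero defect, i.e.\ $\scal{r,\delta}_Q\neq 0$ (then it is preprojective or preinjective). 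If instead $\scal{1_i,\delta}_Q=0$, the simple $s_i$ is regular, sitting at the mouth of a non-homogeneous tube of some rank $p\geq 2$, and for $m\geq 1$ the unique indecomposable of dimension $1_i+m\delta$ is the module of quasi-length $1+mp>p$ in that tube, whose quasi-top and quasi-socle coincide; its endomorphism algebra has dimension at least $2$ and it has self-extensions. Concretely, for $\wt{A}_2$ with arrows $1\to 2$, $2\to 3$, $1\to 3$ one has $\scal{1_2,\delta}_Q=0$, the vector $1_2+\delta=(1,2,1)$ is a real root, and by Kac's uniqueness there is \emph{no} exceptional representation with this dimension vector at all. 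So ``fix any vertex $i$'' cannot work, and your Euler-form argument $\dim\Hom^1(\rho,\rho)=\dim\Hom(\rho,\rho)-1=0$ collapses because $\dim\Hom(\rho,\rho)=1$ was never established.

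This missing point is precisely the nontrivial content of the paper's proof: one must \emph{choose} the vertex $v$ so that $\scal{1_v,1_v}_Q=1$ and $\scal{1_v,\delta}_Q\neq 0$, which the paper does case by case (a sink when $\Gamma(Q)=\wt{A}_m$, using that an acyclic orientation of a cycle has a sink; the extending vertex $\star$ for $\wt{D}_m$, $\wt{E}_6$, $\wt{E}_7$, $\wt{E}_8$, where $\scal{1_\star,\delta}_Q=\pm 1$). With such a $v$ the root $r=1_v+m\delta$ satisfies $\scal{r,\delta}_Q=\scal{1_v,\delta}_Q\neq 0$, and the nonzero-defect criterion (the fact the paper cites from Crawley-Boevey) yields an exceptional, in fact preprojective or preinjective, representation of dimension $r$. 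If you insert this choice of vertex and replace the appeal to ``real root $\Rightarrow$ Schur'' by the nonzero-defect criterion, your argument becomes the paper's proof; as written, it is incorrect.
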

\bpr Let  $\delta \in \NN_{\geq 1}^{V(Q)}$     be the minimal imaginary root of $\Delta_+(Q)$, used in the proof of Lemma \ref{finite number sonv seq}\footnote{In  \cite[fig. (4.13)]{Barot} are given the coordinates of $\delta$ for all euclidean graphs}.  One property of $\delta$ is  that $(\_,\delta )_Q = 0$  on $\NN^{V(Q)}$, where $(\alpha,\beta)_Q= \scal{\alpha,\beta}_Q +\scal{\beta,\alpha}_Q $ is the symmetrization of $\scal{,}_Q$.  We find below a vertex $v\in V(Q)$, s. t. $\scal{1_v,1_v}_Q=\frac{1}{2}(1_v,1_v)_Q=1$, $\scal{1_v,\delta}_Q \neq 0$.
 Then for any $m\in \NN$   we have $1=\frac{1}{2}(1_v+m \delta,1_v+m \delta)_Q=\scal{1_v+m \delta,1_v+m \delta}_Q= 1$, $\scal{1_v+m \delta,\delta}_Q\neq 0$, hence, for big enough $m$, $r=1_v+m \delta$ is  a real positive root $r \in \Delta_+(Q)$ with  $\scal{r,\delta}_Q\neq 0$,  $\{ r_v \geq n \}_{v \in V(Q)}$. Hence by  \cite[p. 27]{WCB2} there is an exceptional representation $\rho$ with $\ul{\dim}(\rho)=r$ and the lemma follows.

If\footnote{Recall that by $\Gamma(Q)$ we denote the underlying non-oriented graph and that $\wt{A}_1$ is graph with two vertices and two parallel edges connecting them,  $\wt{A}_m$ with $m\geq 2$ is a loop with $m+1$ vertices and $m$ edges connecting them forming a simple loop}  $\Gamma(Q)=\wt{A}_m$, $m\geq 1$:  As far as  $Q$ is not an oriented cycle then there is a sink $s \in V(Q)$ (i. e. both the arrows touching $s$ end at it). Hence by \eqref{euler form} $\scal{1_s,1_s}_Q =-\scal{1_s,\delta}_Q = 1$.

 If $\Gamma(Q)=\wt{D}_m$, $m\geq 4$, $\wt{E}_6, \wt{E}_7, \wt{E}_8$. In  \cite[fig. (4.13)]{Barot} are given the coordinates of $\delta$ for all these options for
 $\Gamma(Q)$.   We take $v\in V(Q)$ to be the extending vertex, in \cite[fig. (4.13)]{Barot} this vertex is denoted by $\star$, i.e. $v=\star$. Then by
  \eqref{euler form} and the given in \cite[fig. (4.13)]{Barot} coordinates of $\delta$ one computes     $\scal{1_v,1_v}_Q=1$, $\scal{1_v,\delta}_Q=\pm 1$,
   depending on whether $\star$ is a sink/source in $Q$.  \epr

\begin{df} \label{subquiver} Let $A\subset V(Q)$, $A\neq \emptyset$. By $Q_A$ we denote the quiver with $V(Q_A)=A$ and $Arr(Q_A)= Arr(A,A)=\{a \in Arr(Q) \vert s(a)\in A, t(a) \in A\}$. For any $\rho \in Rep_\CC(Q_A)$ we denote by the same  letter $\rho$ the representation in $  Rep_\CC(Q)$, which on  $A$, $Arr(A,A)$ coincides with $\rho$ and is zero elsewhere.

We say that a vertex $v \in V(Q)$ is adjacent to $Q_A$ if $v \not \in A$ and  $Ed( A, \{v\}) \neq \emptyset$.
\end{df}

\begin{remark} \label{remark for extending} If $\rho \in Rep_\CC(Q_A)$ is an exceptional representation then the corresponding extended representation in
$Rep_\CC(Q)$ is also exceptional.
\end{remark}
\begin{remark} \label{main prop of adjacent vert}
If $v \in V(Q)$ is adjacent to $Q_A$ then    $Arr(Q_{A\cup\{v\}})=Arr(Q_{A}) \cup Ed( A, \{v\})$. \end{remark}

In the following two corollaries we consider   a configuration of a subset $A \subset V(Q)$ and  an adjacent to it vertex $v \in V(Q)$,  s. t.
the arrows connecting $v$ and $A$ are all directed  either  from $v$ to $A$ or  from $A$ to $v$, which means that $v$ is either a source or a sink in
$Q_{A\cup \{v\}}$.

In Corollary \ref{coro for kronecker pairs} we show that if $Q_A$ is an Euclidean quiver then we get a Kronecker pair $(E_1,E_2)$ in $Rep_\CC(Q)$ with
 $\dim_\CC(\Hom^1(E_1,E_2))$ as big as we want (without additional assumption on the quiver $Q$).

In Corollary \ref{coro for kronecker pairs1} we show that if  $\Gamma(Q_A)$ is  either $A_n$($n\geq 1$) or $D_n$($n\geq 4$)  then, under the additional
assumption that there are at least three edges between $v$ and $A$, we get a Kronecker pair $(E_1,E_2)$ in $Rep_\CC(Q)$ with $\dim_\CC(\Hom^1(E_1,E_2))$ equal
to this number of edges.

\begin{coro} \label{coro for kronecker pairs} Let $A\subset V(Q)$ be such that  $Q_A $ is Euclidean. Let $v \in V(Q)$ be a vertex, which is adjacent to
$Q_A$ and  either a sink or a source in $Q_{A\cup\{v\}}$. Then for any $n \geq 3$ there exists a Kronecker pair $(E_1,E_2)$ in $Rep_\CC(Q)$ with
$\dim(\Hom^1(E_1,E_2)) \geq n$.
\end{coro}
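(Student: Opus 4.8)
The plan is to take one object of the pair to be the simple representation $s_v$ and the other to be a large exceptional representation supported on $A$. Since $Q_A$ is Euclidean, Lemma \ref{find exc with big dim} applied to $Q_A$ yields, for the given $n$, an exceptional representation $\rho_0\in Rep_\CC(Q_A)$ with $\ul{\dim}_w(\rho_0)\geq n$ for every $w\in A$. Extending $\rho_0$ by zero produces $\rho\in Rep_\CC(Q)$, which is again exceptional by Remark \ref{remark for extending} and has $supp(\rho)=A$. Since $v\notin A$, the supports $supp(\rho)=A$ and $supp(s_v)=\{v\}$ are disjoint, so by \eqref{suppint} we get $\Hom(\rho,s_v)=\Hom(s_v,\rho)=0$; moreover $\Hom^{\leq-1}$ vanishes always and $\Hom^{\geq2}$ vanishes because $Rep_\CC(Q)$ is hereditary. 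Thus the only possibly non-zero Ext-groups between $\rho$ and $s_v$ sit in degree $1$, and \eqref{suppint} gives
\begin{equation*}
\dim_\CC\Hom^1(\rho,s_v)=\sum_{a\in Arr(A,\{v\})}\ul{\dim}_{s(a)}(\rho),\qquad \dim_\CC\Hom^1(s_v,\rho)=\sum_{a\in Arr(\{v\},A)}\ul{\dim}_{t(a)}(\rho).
\end{equation*}

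Next I would invoke the sink/source hypothesis. If $v$ is a sink in $Q_{A\cup\{v\}}$, then no arrow of $Q_{A\cup\{v\}}$ starts at $v$, so $Arr(\{v\},A)=\emptyset$ while $Arr(A,\{v\})=Ed(A,\{v\})\neq\emptyset$ (non-empty because $v$ is adjacent to $Q_A$); in this case set $(E_1,E_2)=(\rho,s_v)$. If $v$ is a source, then symmetrically $Arr(A,\{v\})=\emptyset$ and $Arr(\{v\},A)=Ed(A,\{v\})\neq\emptyset$, and I set $(E_1,E_2)=(s_v,\rho)$. In either case $\Hom^*(E_2,E_1)=0$: the degree-$1$ part vanishes by the orientation just described, and all other degrees vanish by disjoint supports and heredity. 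Together with exceptionality of $s_v$ (the simple modules are exceptional) and of $\rho$, this shows $(E_1,E_2)$ is an exceptional pair, and $\Hom^{\leq0}(E_1,E_2)=0$ again by disjoint supports.

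Finally, $\dim_\CC\Hom^1(E_1,E_2)$ is a sum over the non-empty set $Ed(A,\{v\})$ in which the term attached to an arrow $a$ equals $\ul{\dim}_w(\rho)$ for $w$ the endpoint of $a$ lying in $A$; since $\ul{\dim}_w(\rho)\geq n$ for all $w\in A$, this forces $\dim_\CC\Hom^1(E_1,E_2)\geq n\geq 3$, and it is of course finite. Hence $(E_1,E_2)$ is a Kronecker pair (Definition \ref{defKroneckeppair}) with $\dim_\CC\Hom^1(E_1,E_2)\geq n$, as claimed. No step here is a genuine obstacle: once Lemma \ref{find exc with big dim} is available, the argument is pure bookkeeping with supports and the Euler-form formula \eqref{euler}. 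The only point that needs attention is ordering $s_v$ and $\rho$ correctly, so that the non-vanishing $\Hom$ lands in degree $1$ from $E_1$ to $E_2$ while $\Hom^*(E_2,E_1)$ vanishes identically; this is precisely what the sink/source assumption arranges.
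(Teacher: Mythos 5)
Your proposal is correct and follows essentially the same route as the paper: an exceptional representation $\rho$ supported on $A$ with all coordinates of $\ul{\dim}(\rho)$ at least $n$ (Lemma \ref{find exc with big dim} plus Remark \ref{remark for extending}), paired with $s_v$, ordered as $(\rho,s_v)$ or $(s_v,\rho)$ according to whether $v$ is a sink or source, with the Hom/Ext computations done via disjointness of supports and \eqref{suppint}. The only difference is that you spell out the heredity and degree bookkeeping slightly more explicitly than the paper does.
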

\bpr From Lemma \ref{find exc with big dim} and Remark \ref{remark for extending} we get an exceptional representation $\rho \in Rep_\CC(Q)$, s. t.
 $supp(\rho) = A$ and $ \{ \ul{\dim}_i(\rho) \geq n \}_{i \in A}$.

If $v$ is a sink in $Q_{A\cup\{v\}}$ then  $Arr(\{v\}, A)=\emptyset $,  and, since $v$ is adjacent to $Q_A$,  $Arr( A, \{v\})\neq \emptyset $.
 From  $\{v\} \cap A = \emptyset$ and  \eqref{suppint} we get $\Hom^*( s_v,\rho)=0$,  $\Hom^{\leq 0}( \rho, s_v)=0$, $\dim_\CC(\Hom^1( \rho, s_v)) \geq n$.
 So that $( \rho, s_v)$ is the  Kronecker pair we need.

If $v$ is a source in $Q_{A\cup\{v\}}$  then the same arguments show that $(  s_v, \rho)$ is such a  Kronecker pair.
\epr

\begin{coro} \label{coro for kronecker pairs1} Let $A\subset V(Q)$ be such that  $\Gamma(Q_A)$ is either $ A_n$($n\geq 1$) or $D_n$($n\geq 4$).
Let $v \in V(Q)$ be  adjacent to $Q_A$ and  either a sink or a source in $Q_{A\cup\{v\}}$. Let $ \# (Ed(\{v\}, A))=n  \geq 3$. Then there exists a Kronecker
pair $(E_1,E_2)$ in $Rep_\CC(Q)$ with $\dim(\Hom^1(E_1,E_2)) = n$.
\end{coro}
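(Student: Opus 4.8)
The plan is to exhibit the required Kronecker pair explicitly, in the same spirit as Corollary \ref{coro for kronecker pairs}, but with a sharper choice of the exceptional representation supported on $A$: one of the two objects will be the simple representation $s_v$, and the other will be the zero extension to $Q$ of an exceptional representation $\rho\in Rep_\CC(Q_A)$ whose dimension vector is the all-ones vector $\mathbf{1}=(1,\dots,1)\in\NN^{A}$. First I would check that $\mathbf{1}$ is a positive root of $Q_A$. Since $\Gamma(Q_A)$ is $A_n$ or $D_n$, it is a tree on $\#A$ vertices, so $Q_A$ has exactly $\#A-1$ arrows, and \eqref{euler form} gives
\[ \scal{\mathbf{1},\mathbf{1}}_{Q_A}=\sum_{j\in V(Q_A)}1-\sum_{a\in Arr(Q_A)}1=\#A-(\#A-1)=1 . \]
As $Q_A$ is Dynkin, \eqref{roots by euler} shows $\mathbf{1}\in\Delta_+(Q_A)$, and it is a real root. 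Because $Q$ is acyclic, so is the connected quiver $Q_A$, and a positive real root of such a quiver is the dimension vector of an exceptional representation — this is exactly the input already used in Lemma \ref{find exc with big dim} (cf. \cite[p.~27]{WCB2}); alternatively, Gabriel's theorem gives a unique indecomposable $\rho$ with $\ul{\dim}(\rho)=\mathbf{1}$, and over a representation-finite hereditary algebra every indecomposable is a brick without self-extensions, hence exceptional. By Remark \ref{remark for extending} the zero extension of $\rho$ is an exceptional representation of $Q$, and since every coordinate of $\mathbf{1}$ is nonzero, $supp(\rho)=A$.

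Next I would run the support computation from \eqref{suppint}. Since $v\notin A$, the supports of $\rho$ and $s_v$ are disjoint, so $\Hom(\rho,s_v)=\Hom(s_v,\rho)=0$; moreover $\Hom^{<0}$ between representations vanishes and $\Hom^{>1}$ vanishes by heredity. Suppose $v$ is a sink in $Q_{A\cup\{v\}}$. As $Q$ has no loops, this means $Arr(\{v\},A)=\emptyset$ and $Arr(A,\{v\})=Ed(\{v\},A)$; hence \eqref{suppint} gives $\Hom^1(s_v,\rho)=0$, so $\Hom^*(s_v,\rho)=0$, i.e. $(\rho,s_v)$ is an exceptional pair with $\Hom^{\leq 0}(\rho,s_v)=0$, and
\[ \dim_\CC\Hom^1(\rho,s_v)=\sum_{a\in Arr(A,\{v\})}\ul{\dim}_{s(a)}(\rho)\,\ul{\dim}_{v}(s_v)=\#Ed(\{v\},A)=n\geq 3 . \]
Thus $(E_1,E_2)=(\rho,s_v)$ is a Kronecker pair with $\dim_\CC\Hom^1(E_1,E_2)=n$. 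If instead $v$ is a source in $Q_{A\cup\{v\}}$, then $Arr(A,\{v\})=\emptyset$ and $Arr(\{v\},A)=Ed(\{v\},A)$, and the same computation with the roles of $\rho$ and $s_v$ interchanged shows that $(E_1,E_2)=(s_v,\rho)$ is a Kronecker pair with $\dim_\CC\Hom^1(E_1,E_2)=\#Ed(\{v\},A)=n$.

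The argument is essentially bookkeeping; the only genuine point is the observation that the all-ones vector is a (real) root of a Dynkin tree quiver, which is precisely what forces $\dim_\CC\Hom^1$ to be exactly $n$ rather than merely bounded below as in Corollary \ref{coro for kronecker pairs}. I do not expect a serious obstacle, the one step deserving care being the existence and exceptionality of $\rho$, which I would pin down via the standard correspondence between positive real roots of a connected acyclic quiver and exceptional representations.
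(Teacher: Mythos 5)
Your proposal is correct and follows essentially the same route as the paper: both take the pair $(\rho,s_v)$ (or $(s_v,\rho)$) with $\rho$ the exceptional representation supported on $A$ with dimension vector $\mathbf{1}$, and compute $\Hom^1$ via the support formula \eqref{suppint}. The only cosmetic difference is that the paper exhibits $\rho$ explicitly ($\CC$ at each vertex of $A$ and identity maps on all arrows of $Q_A$), whereas you obtain it from the fact that the all-ones vector is a positive real root of the Dynkin tree $Q_A$; both justifications are valid.
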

\bpr
Using that $\Gamma(Q_A)$ is either  $A_n$($n\geq 1$) or $D_n$($n\geq 4$) we see that the representation $\rho$, with
$A \ni i \mapsto \CC$, $Arr(A,A) \ni a \mapsto Id_\CC$ and zero elsewhere is an exceptional representation in $Rep_\CC(Q)$ with
$supp(\rho) = A$ and $ \{ \ul{\dim}_i(\rho) =1 \}_{i \in A}$.

If $v$ is a sink in $Q_{A\cup \{v\}}$ then  $Arr(\{v\}, A)=\emptyset $ and $\# (Arr( A, \{v\}))=\# (Ed(\{v\}, A) ) = n.$   From
$\{v\} \cap A = \emptyset$ and  \eqref{suppint} we get $\Hom^*( s_v,\rho)=0$,  $\Hom^{\leq 0}( \rho, s_v)=0$, $\dim_\CC(\Hom^1( \rho, s_v)) =n$. So
that $( \rho, s_v)$ is the  Kronecker pair we need.

If $v$ is a source in $Q_{A\cup \{v\}}$ then the same arguments show that $(  s_v, \rho)$ is such a  Kronecker pair.
\epr
An immediate consequence of this corollary is
\begin{coro} \label{three parallel arrows} If there are  $n\geq 3$ parallel arrows in $Arr(Q)$. Then there exists a Kronecker pair
$(E_1,E_2)$ in $Rep_\CC(Q)$ with $\dim(\Hom^1(E_1,E_2))=n$.
\end{coro}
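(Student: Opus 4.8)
The plan is to specialize Corollary~\ref{coro for kronecker pairs1} to the case in which the subset $A\subset V(Q)$ is a single vertex. So let $a,b\in V(Q)$ be the two vertices joined by the $n$ parallel arrows, labelled so that all $n$ of these arrows have source $a$ and target $b$; since $Q$ is acyclic there is no arrow from $b$ to $a$, so these $n$ arrows are \emph{all} the edges of $Q$ between $a$ and $b$. Put $A=\{a\}$ and $v=b$, and check the hypotheses of Corollary~\ref{coro for kronecker pairs1}. The induced subquiver $Q_A$ has a single vertex and, by acyclicity again, no loops, so $\Gamma(Q_A)$ is the graph $A_1$, which is of the form $A_m$ with $m\geq 1$. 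The vertex $v=b$ is adjacent to $Q_A$ because $Ed(A,\{v\})$ contains the $n$ arrows above, and these are all directed from $a\in A$ to $v$, so $v$ is a sink in $Q_{A\cup\{v\}}$. Finally $\#(Ed(\{v\},A))=n\geq 3$. Corollary~\ref{coro for kronecker pairs1} then produces a Kronecker pair $(E_1,E_2)$ in $Rep_\CC(Q)$ with $\dim_\CC(\Hom^1(E_1,E_2))=n$, which is exactly the assertion.

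It is worth recording what this pair is explicitly. In the proof of Corollary~\ref{coro for kronecker pairs1} the representation $\rho$ attached to $A=\{a\}$, one-dimensional on $a$ and zero elsewhere, is simply the simple module $s_a$, so (as $v$ is a sink) the pair produced is $(s_a,s_b)$. That $(s_a,s_b)$ is a Kronecker pair can also be seen directly from \eqref{suppint}: the supports $\{a\}$ and $\{b\}$ are disjoint, so $\Hom^*(s_b,s_a)=0$ and $\Hom^{\leq 0}(s_a,s_b)=0$, while $\dim_\CC(\Hom^1(s_a,s_b))$ equals the number of arrows from $a$ to $b$, namely $n\geq 3$; and each of $s_a,s_b$ is exceptional since $a$ and $b$ carry no loops.

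Since the statement is a direct specialization of the preceding corollary, there is no genuine obstacle; the only points requiring (minor) care are the bookkeeping facts that a one-vertex quiver realizes the Dynkin graph $A_1$, that acyclicity forbids arrows $b\to a$ so that $Ed(\{v\},A)$ consists of exactly the $n$ parallel arrows, and that ``parallel'' is precisely the condition making $v$ a sink in $Q_{A\cup\{v\}}$ (equivalently, after relabelling $a$ and $b$, a source).
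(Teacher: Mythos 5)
Your proof is correct and is essentially the paper's own argument: the paper also deduces the statement by applying Corollary~\ref{coro for kronecker pairs1} with a one-vertex set (it takes $A=\{j\}$ the target and $v=i$ the source, the symmetric ``source'' case, while you take $A=\{a\}$ and $v=b$ the sink), and both choices produce the same pair $(s_a,s_b)$. The extra direct check via \eqref{suppint} is fine but just reproduces the content of that corollary's proof.
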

\bpr  Let these arrows  start  at
 a vertex $i$ and end  at  a vertex $j$,  then  $\# Arr(\{i\},\{j\})=n\geq 3$, $Arr(\{j\},\{i\})=\emptyset$ and we apply the previous Corollary
 to $A=\{j\}$, $v=\{i\}$.
\epr Whence,  we can assume that there are not more than two
parallel arrows in $Arr(Q)$. We consider next  the case that  two parallel arrows
do occur.
\begin{remark} \label{special cases}
In the considerations, that follow,   we refer
most often to Corollary \ref{coro for kronecker pairs} (i. e. then
we get Kronecker pairs with arbitrary big
$\dim_\CC(\Hom^1(E_1,E_2))$), but there are  three   situations,
where we need Corollary \ref{coro for kronecker pairs1}\footnote{we already used it once in Corollary \ref{three parallel arrows}} with the
minimal admissible number of edges connecting $v$ and $A$, namely
$3$,  and then the produced Kronecker pair is with minimal
possible $\dim_\CC(\Hom^1(E_1,E_2))=3$.

The quiver $Q_{A \cup
\{v\}}$ observed in these three special situations\footnote{in Corollary \ref{three parallel arrows} the quiver $Q_{A \cup
\{v\}}$ is the Kronecker quiver, i. e. $Q_{A \cup
\{v\}}=K(n)$, $n\geq 3$}, in which we use
Corollary \ref{coro for kronecker pairs1}, is as follows (we
denote the set $A$ by $A=\{a_1,a_2,\dots,a_n\}$):

\begin{gather}  \label{special sit} S_1= \begin{diagram}[1.5em]
   &       &  a_2  &       &    \\
   & \ruTo &    & \rdTo  &       \\
v  & \pile{\rTo  \\ \rTo }  &    &       &  a_1
\end{diagram}  \ \ \  S_2= \begin{diagram}[1.5em]
a_1 &\rTo &a_2 &\lTo &a_3\\
&\luTo &\uTo &\ruTo \\
& &v
\end{diagram} \ \  \ %S_3= \begin{diagram}[1.5em]
%a_1 &\lTo &a_2 &\rTo &a_3\\
%&\luTo &\uTo &\ruTo  \\
%& &v
%\end{diagram}
 S_3= \begin{diagram}[1.5em]
& &a_4 & \\
&\ruTo &\uTo  &\luTo \\
a_1 &    &a_2 &    &a_3.\\
&\luTo &\uTo &\ruTo \\
& &v
\end{diagram}. \nonumber     \nonumber    \ \ \ \end{gather}
In $S_1$ $\Gamma(Q_A)=A_2$, in $S_2$  $\Gamma(Q_A)=A_3$, in $S_3$  $\Gamma(Q_A)=D_4$.
\end{remark}

%\subsubsection{If there are more than \texorpdfstring{$3$}{\space} parallel arrows in \texorpdfstring{$Arr(Q)$}{\space}}

\subsubsection{If there are \texorpdfstring{$2$}{\space} parallel arrows in \texorpdfstring{$Arr(Q)$}{\space}}
 Let these two arrows  start  at a vertex $i$ and end  at  a vertex $j$. So that throughout this subsubsection we have $\# (Arr(\{i\},\{j\})) = 2$,  $  Arr(\{j\},\{i\}) = \emptyset$. Then (recall Definition \ref{subquiver}):
 \begin{gather} Q_{\{i,j\}}=\begin{diagram}[1em] i  & \pile{\rTo  \\ \rTo }  & j \end{diagram}, \ \  \ \Gamma(Q_{\{i,j\}})=\wt{A}_1. \end{gather}        By our assumption that $Q$ is  connected  and not Euclidean there is a vertex $k \in V(Q)$ which is adjacent to $Q_{\{i,j\}}$. If either  $Arr(\{k\}, \{i,j\}) = \emptyset $ or $Arr( \{i,j\}, \{k\}) = \emptyset $ then by Corollary \ref{coro for kronecker pairs} we get a Kronecker pair. Hence we can assume  $ Arr( \{k\}, \{i,j\} ) \neq \emptyset $ and $ Arr( \{i,j\}, \{ k \} ) \neq \emptyset.$  From the condition that there are not oriented cycles we reduce to \begin{gather} Arr(\{k\},\{j\} ) \neq \emptyset  \ \ Arr( \{i\}, \{k\} )\neq \emptyset \ \ \Rightarrow \ \ Arr(\{j\}, \{k\} )=  Arr(  \{k\},\{i\} ) = \emptyset. \end{gather}
 If $Arr(\{k\},\{j\} )$ has two elements then  $Q_{\{k,j\}}=\begin{diagram}[1em] k  & \pile{\rTo  \\ \rTo }  & j \end{diagram}$,   $\Gamma(Q_{\{k,j\}})=\wt{A}_1$ , $i$ is adjacent to $Q_{\{k,j\}}$ and $Arr(\{k,j\},\{i\})=Arr(\{k\},\{i\})\cup Arr(\{j\},\{i\})=\emptyset $, hence Corollary \ref{coro for kronecker pairs} produces a Kronecker pair. So that we can assume that $Arr(\{k\},\{j\} )$ has only one element.

 If $Arr(\{i\},\{k\})$ has two elements, then  $Q_{\{i,k\}}=\begin{diagram}[1em] i  & \pile{\rTo  \\ \rTo }  & k \end{diagram}$, $\Gamma(Q_{\{i,k\}})=\wt{A}_1$, $j$ is adjacent to $Q_{\{i,k\}}$ and $Arr(\{j\},\{i,k\})=Arr(\{j\},\{i\})\cup Arr(\{j\},\{k\})=\emptyset $, hence Corollary \ref{coro for kronecker pairs} produces a Kronecker pair. Hence we can assume that $Arr(\{i\},\{k\})$, $Arr(\{k\},\{j\} )$ are single element sets. Using that $Q_{\{i,j\}}=\begin{diagram}[1em] i  & \pile{\rTo  \\ \rTo }  & j \end{diagram}$  we obtain that $Q_{\{i,j,k\}}$ is the same as  the quiver $S_1$ in Remark \ref{special sit} with  $v = i$, $a_1=j$, $a_2=k$ and then by Corollary \ref{coro for kronecker pairs1} we obtain a Kronecker pair.

We reduce to the case
\subsubsection{If there are not parallel arrows in \texorpdfstring{$Arr(Q)$}{\space}} In this case for any pair $i,j \in V(Q)$, $i \neq j$ we have  $\#(Arr(i,j))= \#(Ed(i,j)) \leq 1$. In this subsubsection the term  \textit{loop with $m$ vertices, $m\geq 1$  in $\Gamma(Q)$} means a sequence $a_1,a_2,\dots,a_m$ in $V(Q)$, s. t. $\#\{a_1,a_2,\dots,a_m\}=m$ and
$\{ Ed(a_i,a_{i+1})\neq \emptyset \}_{i=1}^{m-1}$,
$Ed(a_m,a_{1})\neq \emptyset$. Since there are not edges-loops and
there are not parallel arrows in $Q$, then any loop in $\Gamma(Q)$
(if there is such) must be with $m\geq 3$ vertices.

First we show quickly how to get a Kronecker pair if there are not loops.

If there are not loops in $\Gamma(Q)$ (recall also that by assumption $Q$ is neither Dynkin nor Euclidean) then one can  show that for some proper subset
$A\subset V(Q)$   $Q_A$  is an Euclidean quiver of the type $\wt{D}_m$, $m\geq 4$, $\wt{E}_6$, $\wt{E}_7$, $\wt{E}_8$. Take an adjacent to $Q_A$ vertex
 $v\in V(Q)$. The assumption that there are not loops in $\Gamma(Q)$ imply that there is a unique   edge between $v$ and $Q_A$, i. e. either $Arr(\{v\}, A)=\emptyset$ or  $Arr( A,\{v\})=\emptyset$, and then we can apply Corollary \ref{coro for kronecker pairs} to obtain a Kronecker pair.

Till the end of this subsubsection we assume that there   is a loop
in $\Gamma(Q)$.  Let us fix a loop with minimal number of vertices
$a_1,a_2,\dots,a_m$, i. e.  $m$ is the minimal possible number of
vertices in a loop. Denote $A=\{a_1,a_2,\dots,a_m\}$, $\#(A)=m$.
From the minimality of $m$ it follows that
$Ed(a_i,a_j)=\emptyset$, if $1\leq i<j\leq m$, $2\leq j-i \leq
m-2$, hence $Q_A$ (recall Definition \ref{subquiver}) is a quiver
with $\Gamma(Q_A)=\wt{A}_{m-1}$.  As above,
there exists an adjacent to $Q_A$ vertex $v \in V(Q)$. From Corollary
\ref{coro for kronecker pairs} it follows that we can assume
$Arr(\{v\}, A) \neq \emptyset$,  $Arr( A, \{v\}) \neq \emptyset$.
In particular $\#(Ed(A,\{v\}))\geq 2$. Let us summarize
\begin{gather} \Gamma(Q_A)=\wt{A}_{m-1}, \ \ \{v\} \cap A = \emptyset, \ \  Arr(\{v\}, A) \neq \emptyset, \ Arr( A, \{v\}) \neq \emptyset \nonumber\\[-2mm]
  \label{loop and vertex} \\[-2mm] m\geq \#(Ed(A,\{v\}))\geq 2, m\geq 3. \nonumber  \end{gather}
 We consider several cases depending on the numbers $m$, $\#(Ed(A,\{v\}))$.

{\it The case $\#(Ed(A,\{v\}))=m=3$.} \mbox{} \\ We can order $A=\{a_1,a_2,a_3\}$ so that $Q_A= \begin{diagram}[size=1em]
a_2  & \\
     &\rdTo \\
     &    &a_3 .\\
\uTo &\ruTo \\
a_1
\end{diagram}$   \  \  \  By $\#(Ed(A,\{v\}))=3$ it follows that  $v\in V(Q)$ must be connected to all the three vertices  $\{a_1,a_2,a_3\}$ and by
 \eqref{loop and vertex} one of the arrows must start at $v$ and another must end at it. We have either $Arr(\{v\}, \{a_2\}) \neq \emptyset$ or
 $Arr( \{a_2\},\{v\}) \neq \emptyset$.

If $Arr(\{v\}, \{a_2\}) \neq \phi$, then by the assumption that there are not oriented cycles we have
$Arr(\{a_3\},\{v\} ) = \phi$, $Arr(\{v\}, \{a_3\}) \neq \phi$, so that we can only choose  the direction of $Ed(v,a_1)$, i. e. we have two options
for $Q_{\{a_1,a_2,a_3,v\}}$:
$$  \begin{diagram}[1em]
a_2  &  & & & \\
     &\rdTo  \luTo(4,2)&  &  &\\
     &       & a_3 & \lTo & v\\
\uTo & \ruTo &     &    \ruTo(4,2)  & \\
a_1 & & & &
\end{diagram}  \qquad   \begin{diagram}[1em]
a_2  &  & & & \\
     &\rdTo  \luTo(4,2)&  &  &\\
     &       & a_3 & \lTo & v\\
\uTo & \ruTo &     &    \ldTo(4,2)  & \\
a_1 & & & &
\end{diagram} \ .$$
In both  the cases $a_3$ is a sink in $Q_{\{a_1,a_2,a_3,v\}}$, hence we can apply Corollary \ref{coro for kronecker pairs} to $Q_{\{a_1,a_2,v\}}$
 and $a_3\in V(Q)$ and get Kronecker pairs.

If $Arr( \{a_2\},\{v\}) \neq \phi$, now the direction of $Ed(v,a_1)$ is fixed and  both the  options for $Q_{\{a_1,a_2,a_3,v\}}$ are:
$$  \begin{diagram}[1em]
a_2  &  & & & \\
     &\rdTo  \rdTo(4,2)&  &  &\\
     &       & a_3 & \lTo & v\\
\uTo & \ruTo &     &    \ruTo(4,2)  & \\
a_1 & & & &
\end{diagram}  \qquad   \begin{diagram}[1em]
a_2  &  & & & \\
     &\rdTo  \rdTo(4,2)&  &  &\\
     &       & a_3 & \rTo & v\\
\uTo & \ruTo &     &    \ruTo(4,2)  & \\
a_1 & & & &
\end{diagram} \ .$$
In  the first  case we apply  Corollary \ref{coro for kronecker pairs} to $Q_{\{a_1,a_2,v\}}$ and $a_3\in V(Q)$  and in the second we apply it to
 $Q_{\{a_1,a_2,a_3\}}$ and $v\in V(Q)$.

{\it The case $\#(Ed(A,\{v\}))=2$, $m=3$.} \mbox{} \\
 Let us consider $\Gamma(Q_A)= \begin{diagram}[size=1em]
a_2  & \\
     &\rdLine \\
     &    &a_3 \\
\uLine &\ruLine \\
a_1
\end{diagram}$ \ \ without fixing the orientation.  Now there are only two edges between $A$ and $v$. Hence by \eqref{loop and vertex}  there are  exactly two
arrows between $A$ and $v$ and  one of them  must start at $v$ and the other must end at it.  As long as we have not fixed the orientations of the arrows in $Q_A$,
 we can assume that $Arr(\{a_1\},\{v\})\neq \emptyset$, $Arr(\{v\},\{a_2\})\neq \emptyset$. So  that $Q_{\{a_1,a_2,a_3,v\}}$, up to a choice of orientation in
 $Q_A$,  is $  \begin{diagram}[1em]
a_2  &  & & & \\
     &\rdLine  \luTo(4,2)&  &  &\\
     &       & a_3 & & v.\\
\uLine & \ruLine &     &    \ruTo(4,2)  & \\
a_1 & & & &
\end{diagram} $ \ We consider now the possible  choices of directions of the arrows in $Q_A$.  If     $a_3$ is a source/sink in $Q_A$, then it is a
source/sink in $Q_{A\cup\{v\}}$ and we can apply Corollary \ref{coro for kronecker pairs} to $Q_{\{a_1,a_2,v\}}$ and $a_3\in V(Q)$. Whence, by the condition
that  $Q$ is acyclic, we reduce to
$$  \begin{diagram}[1em]
a_2  &  & & & \\
     &\luTo  \luTo(4,2)&  &  &\\
     &       & a_3 & & v\\
\uTo & \ruTo &     &    \ruTo(4,2)  & \\
a_1 & & & &
\end{diagram}=\begin{diagram}[1.5em]
a_3 &\rTo &a_2 &\lTo &v\\
&\luTo &\uTo &\ruTo \\
& &a_1
\end{diagram} $$
and this is a permutation of the special case $S_2$ of  Remark \ref{special cases}. In this case we obtain a Kronecker pair by Corollary
\ref{coro for kronecker pairs1} applied to $Q_{\{v,a_2,a_3\}}$ and $a_1\in V(Q)$.

%If $Ed(\{a_2\}, \{v\}) \neq \emptyset$ the possible configurations for $Q_{\{a_1,a_2,a_3,v\}}$ satisfying all these conditions are:
%$$  \begin{diagram}[1em]
%a_2  &  & & & \\
%     &\rdTo  \luTo(4,2)&  &  &\\
%     &       & a_3 & & v\\
%\uTo & \ruTo &     &    \ruTo(4,2)  & \\
%a_1 & & & &
%\end{diagram} \ \ \
% = \ \ \begin{diagram}[1.5em]
% a_3 &\lTo &a_2 &\lTo &v\\
% &\luTo &\uTo &\ruTo \\
% & &a_1
% \end{diagram} ,  \ \ \
%\begin{diagram}[1em]
%a_2  &  & & & \\
%     &\rdTo  \rdTo(4,2)&  &  &\\
%     &       & a_3 &  & v\\
% \uTo & \ruTo &     &    \ruTo(4,2)  & \\
% a_1 & & & &
% \end{diagram} \ \ \ = \ \ \begin{diagram}[1.5em]
% a_3 &\lTo &a_2 &\rTo &v\\
% &\luTo &\uTo &\ruTo \\
% & &a_1
% \end{diagram},
% \qquad  \qquad \begin{diagram}[1em]
%a_2  &  & & & \\
%     &\rdTo  \rdTo(4,2)&  &  &\\
%     &       & a_3 & \lTo & v.\\
%\uTo & \ruTo &     &     & \\
%a_1 & & & &
%\end{diagram}
%$$
%In the first  case  we apply Corollary \ref{coro for kronecker pairs} to $Q_{\{a_1,a_2,v\}}$ and $a_3\in V(Q)$.   In  the second
%case we apply  %Corollary \ref{coro for kronecker pairs} to $Q_{\{a_2,a_3,v\}}$ and $a_1\in V(Q)$.

%If $Ed(\{a_2\}, \{v\}) = \emptyset$ then there is only one  possible configuration  of $Q_{\{a_1,a_2,a_3,v\}}$ satisfying all these conditions:
%$$  \begin{diagram}[1em]
%a_2  &  & & & \\
%     &\rdTo  &  &  &\\
%     &       & a_3 & \lTo & v\\
%\uTo & \ruTo &     &    \ruTo(4,2)  & \\
%a_1 & & & &
%\end{diagram} $$

{\it The case  $m=4$.} \mbox{} \\
In this case $ \Gamma(Q_A)= \begin{diagram}[1.5em]
 a_4 &  \rLine  &  a_3   \\
  \uLine &        & \uLine     \\
a_1   & \rLine  &    a_2
\end{diagram} $ and by the minimality of $m=4$ it follows $\#(Ed(A,\{v\}))=2$ (recall that we have reduced to \eqref{loop and vertex}).
In other words the adjacent vertex $v$ must be connected to two of the vertices of the  quadrilateral $ \Gamma(Q_A) $. Again by the minimality of $m=4$
 these two vertices must be diagonal and, as long as we have not fixed the orientations of the arrows, we can assume that
 $Arr(\{a_1\},\{v\})\neq \emptyset$, $Arr(\{v\},\{a_3\})\neq \emptyset$. So  that $Q_{\{a_1,a_2,a_3,a_4,v\}}$, up to a choice of orientation in $Q_A$,  is
$ \begin{diagram}[1em]
& &a_3 & \\
&\ldLine &\dLine &\luTo \\
a_4 &    &a_2 &    &v.\\
&\rdLine &\dLine &\ruTo \\
& &a_1
\end{diagram} $ \ It follows to assign directions of the arrows in $Q_A$.  If    $a_4$ or $a_2$ is a source/sink in $Q_A$
then we can apply Corollary \ref{coro for kronecker pairs} to $Q_{\{a_1,a_2,a_3,v\}}$ and $a_4\in V(Q)$ or to $Q_{\{a_1,a_4,a_3,v\}}$ and $a_2\in V(Q)$,
 respectively. Whence, we reduce to
$Q_{\{a_1,a_2,a_3,a_4,v\}}= \begin{diagram}[1em]
& &a_3 & \\
&\ruTo &\uTo  &\luTo \\
a_4 &    &a_2 &    &v,\\
&\luTo &\uTo &\ruTo \\
& &a_1
\end{diagram}  $  which is a permutation of  the quiver $S_3$ in Remark \ref{special cases}.

{\it The case $m\geq 5$.}

If $m= 2 k + 1$ is odd, $k\geq 2$, then we can depict $\Gamma(Q_A)$ , $v\in V(Q)$ and one edge in $Ed(\{v\},Q_A)$ as follows:
$$\bd[1em] a_{k+1} & \rLine  &   &         & a_{k+2} \\
     \vdots &          &       &         & \vdots \\
      a_3   &          &       &         & a_{2k} \\
      \dLine &         &       &          & \dLine \\
      a_2    &         &       &         & a_{2k+1} \\
             & \rdLine &       & \ruLine &         \\
             &         & a_1   &         &         \\
             &         &\dLine &         &          \\
             &         & v     &         &        \\
\ed $$
We have reduced to the case $\#(Ed(\{v\},Q_A)) \geq 2$ (see \eqref{loop and vertex}). If we add another edge between $v$ and $A$ then we obtain another loop
with number of vertices less or equal to $k+2$. By $k\geq 2$ we have $k+2<2 k +1$, which contradicts the minimality of $m=2 k+1$.

If $m= 2 k $ is even, $k\geq 3$, then we can depict $\Gamma(Q_A)$ , $v\in V(Q)$ and one edge in $Ed(\{v\},Q_A)$ as follows:
$$\bd[1em] &         & a_{k+1} &       &        \\
            & \ruLine  &         & \luLine &        \\
    a_{k}   &          &          &         & a_{k+2} \\
     \vdots &          &       &         & \vdots \\
      a_3   &          &       &         & a_{2k-1} \\
      \dLine &         &       &          & \dLine \\
      a_2    &         &       &         & a_{2k} \\
             & \rdLine &       & \ruLine &         \\
             &         & a_1   &         &         \\
             &         &\dLine &         &          \\
             &         & v     &         &        \\
\ed $$
Again,  another edge between $v$ and $A$ produces  another loop with number of vertices less or equal to $k+2$. By $k\geq 3$ we have $k+2<2 k $,
which contradicts the minimality of $m=2 k$.

Proposition \ref{prop for kronecer pairs in quivers} is completely proved and  it implies  Proposition \ref{non euc and non dyn}.

%, Remark \ref{special cases}
Having Proposition \ref{non euc and non dyn}, Remark \ref{oriented cycles},  Corollary \ref{coro for density1} and Lemma \ref{Dynkin} we obtain table \ref{table}.

\subsection{Further examples of Kronecker pairs} \label{further examples}
Here we give some more examples of Kronecker pairs. In all the cases Corollary
\ref{coro for kronecker pairs 3} can be applied.
 \subsubsection{Markov triples}  It is shown in \cite[Example 3.2]{BondalPol}
 that if $X$ is a smooth projective variety (we assume over $\CC$), such that
 $D^b(Coh(X))$ is  generated by a strong exceptional collection of three
 elements (for example $X={\mathbb P}^2$) then for any such collection
 $(E_0,E_1,E_2)$ the dimensions $a=\dim(\Hom(E_0,E_1))$,
 $b=\dim(\Hom(E_0,E_2))$, $c=\dim(\Hom(E_1,E_2))$ satisfy Markov's equation
 $a^2+b^2+c^2 =  a b c$.  If $(a,b,c)\neq (0,0,0)$ and $a,b,c \leq 3$  then
 $(a,b,c)$  satisfy Markov's equation iff $a=b=c=3$, i. e.  the ``minimal'' such
 triple is $(3,3,3)$. Hence for any strong collection $(E_0,E_1,E_2)$ on
 $D^b(Coh(X))$  for some $i<j$ the pair $(E_i,E_j[-1])$ is Kronecker. Corollary
 \ref{coro for kronecker pairs 3} can be applied since $D^b(Coh(X))$ is of finite type.

 \subsubsection{$\PP^1 \times \PP^1$} In \cite[p. 3]{perling} a full exceptional
 collection  consisting of sheaves on $\PP^1 \times \PP^1$ is described. The
 matrix given there contains the dimensions of $\Hom(E_i,E_j)$, where $E_i,$
 $E_j$ are pairs in the exceptional collection. The number 4 in this matrix
 corresponds to a Kronecker pair.

\subsubsection{$\PP^n$, $n\geq 2$ and their blow ups}
Another example, where Corollary \ref{coro for kronecker pairs 3} can be applied,
is the standard strong exceptional collection $(\mc O, \mc O(1),\dots, \mc
O(n))$ on $\mathbb P^n$, $n\geq 2$. For $n\geq 2$ $\{ \dim(\Hom(\mc O(i-1),\mc
O(i)))\geq 3 \}_{i=1}^n$ so that $\{ (\mc O(i-1),\mc O(i)[-1])  \}_{i=1}^n$ are all Kronecker pairs.

Take now $\PP^n$, $n\geq 2$ and blow it up in finite number of points and let
the obtained variety be $X$. By  \cite[Theorem 4.2]{BondalOrlov}  we know that $D^b(X)$ has a
semiorthogonal decomposition $\left \langle E_1,E_2,\dots,E_l, D^b(\PP^n) \right
\rangle$, where $E_1,E_2,\dots,E_l$ are exceptional objects. The Kronecker
pairs of  $D^b(\PP^n)$ are also Kronecker pairs in $D^b(X)$ and Corollary
\ref{coro for kronecker pairs 3} can be applied. In particular these arguments
hold for all del Pezzo surfaces.

After  blowing up in a more general subvarieties $Y\subset \PP^n$ we still get
Kronecker pairs but in this case one must check the extendability condition in
Theorem \ref{coro for kronecker pairs 0} (see remark \ref{more general} ).

\section{Open questions}
In the previous sections we have described the foundations  of some  new
directions in the theory of derived categories and their connection with
``classical'' geometry and dynamical systems. We believe this is a promising
field and conclude by formulating possible future directions of research.

\subsection{Algebraicity of entropy}

In all examples we have considered so far the graph of the entropy function
$t\mapsto h_t(F)$ is an algebraic curve defined over $\mathbb Q$ in coordinates
$(\exp(t),\exp(h_t(F)))$.
Also, in the $\mathbb Z/2$-graded case $\exp(h_0(F))$ is an algebraic number.
(The stretch factor of a pseudo-Anosov map is an algebraic integer of degree
bounded above by a quantity depending only on the genus, see~\cite{flp}.)

\begin{q}
Is algebraicity of entropy a general phenomenon, and if so, what
are natural sufficient conditions for it?
\end{q}

\subsection{Pseudo-Anosov autoequivalences}

Recall that pseudo-Anosov maps are characterized by the fact that
they preserve a quadratic differential up to rescaling in the
horizontal and vertical direction by factors $1/\lambda$ and
$\lambda$ respectively. Conjecturally, such a quadratic
differential defines a stability condition on the (wrapped) Fukaya
category of the complement of its zeros. This stability condition
would then be preserved by the autoequivalence up to rescaling of
the real and imaginary parts of the central charge. In the case of
the torus this follows from homological mirror symmetry and the
description of the space of stability conditions of the bounded
derived category of an elliptic curve. With this in mind, we
propose the following.

\begin{df}
An autoequivalence $\phi$ of a triangulated category $\mc C$ is
\emph{pseudo-Anosov} if there exists a Bridgeland stability condition $\sigma$
on $\mc C$ and $\lambda>1$ such that
\begin{equation} \label{catPseudoAnosov}
\phi\sigma = \left( \begin{array}{cc} \lambda^{-1}
& 0 \\ 0 & \lambda \end{array}\right)\sigma
\end{equation}
where we use the action of the universal cover of $GL^+(2,\mathbb R)$ on
$\mathrm{Stab}(\mc C)$.
\end{df}

An example is given by the shifted Serre functor $S\circ[-1]$ on
the bounded derived category of the Kronecker quiver with $m\geq 3$ arrows.
A $\sigma$ satisfying \eqref{catPseudoAnosov} lies in the chamber where the set
of phases is dense in an arc.
The stretch factor is given by
\begin{equation}
\lambda = \frac{m^2+\sqrt{m^4-4m^2}}{2}-1
\end{equation}
and $\log\lambda$ is the entropy of $S\circ[-1]$.

Suppose $\mc C$ is the Fukaya-category of a symplectic manifold $M$ and $\phi$
is a pseudo-Anosov autoequivalence in the above sense.

\begin{q}
Are there, in analogy with the case of surfaces, transverse lagrangian
foliations of $M$ which are preserved by some symplectomorphism which induces
$\phi$?
\end{q}

\subsection{Birational maps and dynamical spectrum}

Recently   a new approach to birational geometry was suggested in the work of
Cantat and Lamy \cite{CL}. In their celebrated work they make a parallel between
dynamical theory  of Teichm\"uller space and the Bogomolov-Picard-Manin space
associated with an algebraic surface.
This connection was taken a step further in the work of Blanc and Cantat
\cite{BlancCantat} where the dynamical spectra of the groups of birational
automorphisms of surfaces acting on the Bogomolov-Picard-Manin space were
investigated.

In the previous sections we suggested that the space of stablity conditions
should play the role of such a  categorical Teichm\"uller space.

\begin{q}
Is there a categorical approach to Bogomolov-Picard-Manin spaces?
\end{q}

First we restrict  ourselves  to the case  $\dim(X)=2$.
Let $X$ be a smooth projective surface  over $\mathbb C$.
A birational map $\phi:X\to X$ does not induce a functor on $D^b(X)$ itself,
but (assuming it is an isomorphism in codimension one) on the quotient category
$D^b_{(1)}(X)=D^b(X)/S$, where $S$ is the subcategory of complexes with
cohomology supported in codimension at least two (see \cite{MeinhardtPartsch}).
On the other hand, $\phi$ has a well-defined dynamical degree $\lambda(\phi)$.

\begin{q}
Are these notions of entropy the same, i.e. is it true that
$\log\lambda(\phi)=h_0(\phi^*)$?
\end{q}

In analogy with the dynamical spectrum studied by Blanc and Cantat, one can,
quite generally, consider the set entropies of autoequivalences of a
triangulated category.
This is an invariant which deserves, in our opinion, further study.
A first natural question is:

\begin{q}
What is the relation between the dynamical spectrum of $X$ and the entropy
spectrum of $D^b_{(1)}(X)$?
\end{q}

We move now to the case when $dim(X)>2$. In this case the main question is:

\begin{q}
What is the right analogue of $D^b_{(1)}(X)$ and of the categorical
Bogomolov-Picard-Manin space in the case  $dim(X)>2$?
\end{q}

Once this question is answered we expect dynamical spectra of these categories
to play an important role in the study of birational geometry of $X$. In
particular, the rich geometry of Fano manifolds suggest that the dynamical
spectrum will contain significant gaps for many non-rational Fano manifolds.

\subsection{Complexity and mass}

Suppose $\mc T$ is a triangulated category with stability condition $\sigma$.
Let $E\in\mc T$ be an object with semistable factors $G_i$.
The \emph{mass} $m(E)$ of $E$ is by definition $\sum_i|Z(G_i)|$.
As with complexity, we can introduce a parameter $t$ and consider
\begin{equation}
m_t(E)=\sum_i|Z(G_i)|\re^{\frac{\phi(G_i)}{\pi}t}
\end{equation}
where $\phi(G_i)\in\mathbb R$ is the phase of $G_i$.
Intuitively, $m_t(E)$ measures the ``size'' of $E$, analogous to $\delta_t(G,E)$
for a fixed generator $G$.

\begin{q}
Is $m_t$ equivalent to $\delta_t(G,\_)$, in the same sense that the
$\delta_t(G,\_)$ are equivalent for various choices of $G$?
\end{q}

We hope to return to this question in future work.

\subsection{Questions related to Kronecker pairs and density of phases}

%% We have only touched the tip of the iceberg in the study of density of phases a very rich subject on its own - we finish with formulating a couple of density
%% questions.
The results of Section 3 are a  motivation for the following   questions.

Recall that in any of  the quivers listed in Remark \eqref{special cases}  we found Kronecker pairs $(E_1,E_2)$ with
$\dim(\Hom^{k}(E_1,E_2))= 3$ for $k=1$ and $0$ for $k\neq 1$. We expect that the first part of  following question  has a positive answer:

\begin{q} Do the inequalities $\{ \dim(\Hom^{k}(E_1,E_2))\leq 3 \}_{k \in \ZZ}$ hold for any exceptional pair $(E_1,E_2)$  in any of  the
quivers listed in Remark \eqref{special cases} ?

 Determine all the quivers $Q$, s. t. the dimensions
$\{ \dim(\Hom^{k}(E_1,E_2)) \}_{k \in \ZZ} $, where $(E_1,E_2)$ vary through all
exceptional pairs,  are bounded above.
\end{q}

Recall that by Corollary \ref{upper bound for euc} these dimensions are
strictly smaller than $3$ if $Q$ is either  Euclidean or a Dynkin quiver, and
they  are not bounded above if  Corollary \ref{coro for kronecker pairs}  can
be applied to $Q$.

 We expect that  the categories with non-dense behaviour of phases form a ``thin'' set of categories.
Among these, the categories $D^b(Rep_\CC(Q))$ with $Q$ an Euclidean quiver  have somehow remarkable  behaviour of $P_\sigma$
(see the second row of Table \ref{table}).  So that the next question is:

\begin{q}  Which are the triangulated categories $\mc T$, s. t. for any $\sigma
\in \st(\mc T)$ the set of phases  $P_\sigma$ is either finite or has two limit points? \end{q}

    \end{document}